\documentclass[11pt]{article}
\usepackage{amsmath,amsthm, amssymb, amsfonts,accents,nccmath}
\usepackage{enumitem}
\usepackage{array}
\usepackage[toc,page]{appendix}
\usepackage[english]{babel}
\usepackage{dsfont}
\usepackage{booktabs}
\usepackage[linesnumbered,commentsnumbered,ruled,vlined]{algorithm2e}
\usepackage{tikz}
\usepackage{tikz-network}
\usepackage[utf8]{inputenc}
\usepackage[english]{babel}

\usepackage{caption}
\usepackage{subcaption}

\usepackage{hyperref}
\hypersetup{
    colorlinks=true,
    linkcolor=blue,
    filecolor=darkmagenta,      
    urlcolor=cyan,
    citecolor=red
} 
\urlstyle{same}

\makeatletter
\def\BState{\State\hskip-\ALG@thistlm}
\makeatother
\numberwithin{equation}{section}

\fboxrule0.0001pt \fboxsep0pt

\newtheorem{theorem}{Theorem}[section]

\newtheorem{lem}[theorem]{Lemma}
\newtheorem{prop}[theorem]{Proposition}
\newtheorem{defn}[theorem]{Definition}
\newtheorem{rem}[theorem]{Remark}
\newtheorem{ex}[theorem]{Example}

\newcommand{\bigzero}{\mbox{\normalfont\Large\bfseries 0}}

\newcommand{\ds}{\displaystyle}

\newcommand{\cof}{\textup{cof }}

\textheight=23.5cm \textwidth=16.5cm \oddsidemargin=0.0in
\evensidemargin=-0.5in \topmargin=-0.6in

\title{Distance Matrix of a  Multi-block Graph: Determinant and Inverse }
\author{Joyentanuj Das$^*$   \quad and \quad Sumit Mohanty\footnote{School of Mathematics, IISER Thiruvananthapuram, Maruthamala P.O., Vithura, 
Thiruvananthapuram,\newline \indent   Kerala- 695 551, India.
 \newline \indent Emails:
joyentanuj16@iisertvm.ac.in,  \quad sumit@iisertvm.ac.in, sumitmath@gmail.com. 
\newline \indent Phone: +91-471-2778112.    }}

\date{}

\begin{document}

\maketitle

\begin{abstract}
A connected graph is called a multi-block graph if each of its blocks is a complete multipartite graph. Building on the work of~\cite{Bp3,Hou3}, we compute the determinant and inverse of the distance matrix for a class of multi-block graphs.

\end{abstract}

\noindent {\sc\textsl{Keywords}:} $m$-partite graphs, Laplacian-like matrix, Distance matrix,  Determinant, Cofactor.

\noindent {\sc\textbf{MSC}:}  05C12, 05C50
\section{Introduction  and Motivation}
Let $G=(V(G),E(G))$  be a finite, simple, connected graph with $V(G)$ as the  set of vertices and $E(G)\subset V(G) \times V(G)$ as the set of edges in $G$, we simply write $G=(V,E)$ if there is no scope of confusion. We write $i\sim j$ to indicate that the vertices $i,j \in V$ are adjacent in $G$.  For $m\geq 2$,  a graph is said to be  $m$-partite   if the vertex set can be partitioned into $m$  subsets $V_i, \ 1\leq i\leq m$  with $|V_{i}|=n_i$ and $|V|=\sum_{i=1}^m n_i$ such that $ E\subset \bigcup_{i,j \atop i\neq j} V_{i} \times V_{j}$.  A $m$-partite graph is said to be a complete $m$-partite graph, denoted by $K_{n_1,n_2,\cdots,n_m}$ if every vertex in $V_{i}$ is adjacent to every vertex of $V_{j}$ and vice  versa for $i \neq j$ and $i,j = 1,2,\ldots,m$.  A graph with $m$ vertices is called complete, if each vertex of the graph is adjacent to every other vertex and is denoted by $K_m$.  The complete graph $K_m$ ($m\geq 2$) can also be seen as  complete $m$-partite graph  $K_{n_1,n_2,\cdots,n_m}$ with $n_i=1$ for all $1\leq i\leq m$.
 
Before proceeding further, we first introduce a few notations which will be used time and again throughout this article. Let $I_n$, $ \mathds{1}_n$ and $e_i$ denote the identity matrix, the column vector of all ones  and the column vector with $1$ at the $i^{th}$ entry,  respectively.  Further, $J_{m \times n}$  denotes the $m\times n$ matrix of all ones and if $m=n$, we use the notation $J_m$.  We write $\mathbf{0}_{m \times n}$ to represent zero matrix of order $m \times n$ and simply write $\mathbf{0}$ if there is no scope of confusion with the order of the matrix.  We use $\mathbb{N}$ to represent the set of natural numbers.

Let $A$ be an $m \times n$ matrix. We use the notation $A(i \mid j)$ to denote the submatrix obtained by deleting the $i^{th}$ row and the $j^{th}$ column. Given a matrix $A$, we use $A^t$ to denote the transpose of the matrix $A$. Let $B$ be an $n \times n$ matrix. For $1 \leq i,j \leq n$, the cofactor $c_{ij}$ is defined as $(-1)^{i+j} \det B(i \mid j)$. The transpose of cofactor matrix $[c_{ij}]$  of  $B$ is called the adjoint of $B$, denoted by $\mbox{Adj }B$. We use the notation  $\cof B$ to denote the sum of all cofactors of $B$. Hence $\cof B=\mathds{1}_n^t \mbox{ Adj }B \ \mathds{1}_n$.

A connected graph $G$  is a metric space with respect to the metric $d,$ where $d(i,j)$ equals the length of the shortest path between vertices $i$ and $j$. Note that $d(i,i)=0$. Before proceeding further, we recall the definitions of the \emph{distance matrix} and the \emph{Laplacian matrix} of a graph $G$.

Let $G$ be a graph with $n$ vertices. The distance matrix of   graph  $G$  is an  $n \times n$ matrix,  denoted by  $D(G) = [d_{ij}]$, where $d_{ij}= d(i,j)$,  and the  Laplacian matrix of $G$  is an $n \times n$ matrix, denoted as $L(G)=[l_{ij}]$, where
$$
l_{ij}=
\begin{cases}
\delta_i & \text{if} \ i = j; \\
-1 &  \text{if} \ i \neq j, i \sim j; \\
0 &\text{otherwise,}
\end{cases}
$$
and $\delta_i$ denotes the degree of the vertex $i$. It is well known that $L(G)$ is a symmetric, positive semi-definite matrix. The constant vector  $\mathds{1}$ is the eigenvector of $L(G)$ corresponding to the smallest eigenvalue $0$ and hence  satisfies $L(G)\mathds{1} = \mathbf{0}$ and $\mathds{1}^t L(G) = \mathbf{0}$ (for details see~\cite{Bapat}).

Let $T$ be a tree with $n$ vertices. In~\cite{Gr1}, the authors proved that the determinant of the distance matrix $D(T)$ of $T$ is given by $\det D(T)=(-1)^{n-1}(n-1)2^{n-2}.$ Note that, the determinant does not depend on the structure of the tree, but it only depends on the number of vertices. In \cite{Gr2}, it was shown that the inverse of the distance matrix of tree $T$ is given by $D(T)^{-1} = -\dfrac{1}{2}L(T) + \dfrac{1}{2(n-1)}\tau \tau^t,$ where $\tau = (2-\delta_1,2-\delta_2,...,2-\delta_n)^t.$  The above expression gives a formula for inverse of the distance matrix of a tree in terms of the Laplacian matrix.

 A vertex $v$ of a graph $G$ is a cut-vertex of $G$ if $G - v$ is disconnected. A block of the graph $G$ is a maximal connected subgraph of G that has no cut-vertex.   In~\cite{Bp3}, the authors compute the determinant and the inverse of the distance matrix of graphs whenever each of its blocks is a complete graph, called such graphs as block graphs. Further, in~\cite{Hou3}  the determinant and the inverse of the distance matrix of graphs were computed whenever each of its blocks is a complete bipartite graph,  such graphs are called  bi-block graphs.  To be specific, the authors defined a matrix  $\mathcal{L}$ satisfying  $\mathcal{L}\mathds{1} = \mathbf{0}$ and $\mathds{1}^t \mathcal{L} = \mathbf{0}$ and such a matrix was formally defined as a Laplacian-like matrix in~\cite{Zhou1}.  Further, it was shown that for a given graph $G$ of the above classes if the determinant of the distance matrix $ D(G)$ is not zero, then the inverse of $D(G)$  is given by
$$D(G)^{-1} = -\mathcal{L} + \frac{1}{\lambda_{G}}\mu \mu^t,$$
where $\mu$ is a column vector, $\lambda_G$ is a suitable constant and $\mathcal{L}$ is a Laplacian-like matrix (for details see~\cite{Bp3,Hou3}). Similar results were obtained for other class of graphs, namely catcoid digraphs, cycle-clique graphs, weighted cactoid digraph (for details see~\cite{Hou1, Hou2,Zhou2}).

In this manuscript, our  objective is to extend the results in~\cite{Bp3,Hou3}. To be precise, we aim to compute the determinant and inverse of the distance matrix of graphs such that each of its blocks is a complete multipartite graph; we call such graphs as  multi-block graphs.  Now we will  recall a few preliminary results useful for subsequent sections.

\begin{lem}\label{Lem:cof}\cite{Bapat}
Let $A$ be an $n \times n$ matrix. Let $M$ be the matrix obtained from $A$ by subtracting the first row from all other rows and then subtracting the first column from all other columns. Then $$\cof A = \det M(1 | 1).$$
\end{lem}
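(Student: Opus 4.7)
The plan is to relate $\cof A$ to a pair of determinants of matrices derived from $A$, and then to show that the row/column operations defining $M$ reduce this relation to the desired formula. First, I would establish the auxiliary identity
$$
\cof A \;=\; \det A \;-\; \det(A - J_n),
$$
where $J_n = \mathds{1}_n \mathds{1}_n^t$ is the $n \times n$ all-ones matrix. This follows from the adjugate form of the matrix determinant lemma, valid for arbitrary square $A$:
$$
\det(A - \mathds{1}_n \mathds{1}_n^t) \;=\; \det A \;-\; \mathds{1}_n^t \,\mathrm{Adj}(A)\, \mathds{1}_n \;=\; \det A - \cof A,
$$
using the definition $\cof A = \mathds{1}_n^t \,\mathrm{Adj}(A)\, \mathds{1}_n$ recalled in the excerpt.

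Next, let $T$ denote the composite operation that subtracts the first row from every other row and then subtracts the first column from every other column. Since $T$ is a composition of elementary row and column operations, $\det T(X) = \det X$ for every $n \times n$ matrix $X$; moreover, $T$ is linear in $X$. A direct calculation gives $T(J_n) = e_1 e_1^t$. Setting $N := T(A - J_n)$, linearity yields
$$
N \;=\; T(A) - T(J_n) \;=\; M - e_1 e_1^t,
$$
so $M$ and $N$ agree in every entry except at the $(1,1)$ position, where $N_{11} = M_{11} - 1$; in particular $M(1 \mid 1) = N(1 \mid 1)$.

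Expanding $\det M$ and $\det N$ along the first row and subtracting, all summands corresponding to columns $j \geq 2$ are identical and cancel, leaving
$$
\det M - \det N \;=\; (M_{11} - N_{11}) \det M(1 \mid 1) \;=\; \det M(1 \mid 1).
$$
Since $T$ preserves determinants, $\det M = \det A$ and $\det N = \det(A - J_n)$, and combining with the auxiliary identity gives
$$
\cof A \;=\; \det A - \det(A - J_n) \;=\; \det M - \det N \;=\; \det M(1 \mid 1),
$$
which is the claim. No step is a genuine obstacle; the most delicate piece is the entrywise identification $N = M - e_1 e_1^t$, which is a short bookkeeping exercise once one records the effect of $T$ on $A$ and on $J_n$ separately.
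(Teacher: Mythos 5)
The paper does not prove this lemma; it is quoted from Bapat's book with a citation, so there is no in-paper argument to compare against. Your proof is correct and self-contained. Each step checks out: the adjugate form of the matrix determinant lemma $\det(A+uv^t)=\det A+v^t\,\mathrm{Adj}(A)\,u$ is a polynomial identity valid for all $A$, giving $\cof A=\det A-\det(A-J_n)$; the operation $T$ is conjugation $X\mapsto EXE^t$ by a unimodular matrix, hence determinant-preserving and linear, with $E\mathds{1}_n=e_1$ so that $T(J_n)=e_1e_1^t$; and the first-row expansion of $\det M-\det N$ collapses to $\det M(1\mid 1)$ because $M$ and $N$ differ only in the $(1,1)$ entry. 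For reference, the classical proof (e.g.\ in Bapat) runs the same reduction on $\det(A+xJ_n)$, viewed as a polynomial of degree at most one in $x$ with linear coefficient $\cof A$, and reads off that coefficient after applying $T$; your argument is the specialization of this at $x=-1$, packaged through the matrix determinant lemma. The two are essentially equivalent, and yours is complete as written.
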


A graph $G$ is said to be $k$-connected if there does not exist a set with less than $k$ vertices whose removal disconnects the graph $G$. The authors in~\cite{Gr3} established the following result for connected graphs with $2$-connected blocks.

\begin{theorem}\label{Thm:cof-det}\cite{Gr3}
Let $G$ be a connected graph with $2$-connected blocks $G_1,G_2,\cdots,G_b$. Then
$$\cof D(G) = \displaystyle \prod_{i=1}^{b} \cof D(G_i),$$
$$\det D(G) = \displaystyle \sum_{i=1}^{b} \det D(G_i) \prod_{j \neq i} \cof D(G_j).$$
\end{theorem}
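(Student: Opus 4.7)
The plan is to induct on the number of blocks $b$. The case $b=1$ is immediate. For $b \geq 2$, I pick a leaf block $G_b$ of the block-cut tree of $G$, which shares exactly one cut-vertex $v$ with the rest of the graph. Setting $H := G - (V(G_b) \setminus \{v\})$, the graph $H$ is connected with $b-1$ blocks $G_1, \dots, G_{b-1}$, and $G$ is the one-point union of $H$ and $G_b$ at $v$. The entire problem reduces to the two-block fusion identity
\begin{align*}
\cof D(G) &= \cof D(H)\,\cof D(G_b), \\
\det D(G) &= \det D(H)\,\cof D(G_b) + \det D(G_b)\,\cof D(H),
\end{align*}
since the inductive hypothesis applied to $H$, combined with straightforward algebra, repackages the right-hand sides into the desired sum and product over all $b$ blocks.

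To prove the fusion identity, I would assume $G = G_1 \cup G_2$ with $V(G_1) \cap V(G_2) = \{v\}$ and list the vertices as $v$, then $V(G_1) \setminus \{v\}$, then $V(G_2) \setminus \{v\}$. The $2$-connectedness of the blocks guarantees that shortest paths between vertices of a common block stay inside that block, so mixed distances are additive through the cut-vertex: $d_G(i,j) = d_{G_1}(i,v) + d_{G_2}(v,j)$ for $i \in V(G_1)$, $j \in V(G_2) \setminus \{v\}$. Writing $\alpha_i := d_{G_1}(i,v)$, $\gamma_j := d_{G_2}(v,j)$, and letting $D_k'$ denote $D(G_k)$ with the row and column of $v$ deleted, one obtains
$$
D(G) = \begin{pmatrix} 0 & \alpha^t & \gamma^t \\ \alpha & D_1' & \alpha \mathds{1}^t + \mathds{1}\gamma^t \\ \gamma & \mathds{1}\alpha^t + \gamma\mathds{1}^t & D_2' \end{pmatrix}.
$$
The rank-two form of each coupling block is the engine of the argument. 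Performing the operations of Lemma~\ref{Lem:cof} (subtract the $v$-row from every other row, then the $v$-column from every other column) annihilates both coupling blocks and yields
$$
M = \begin{pmatrix} 0 & \alpha^t & \gamma^t \\ \alpha & X & 0 \\ \gamma & 0 & Y \end{pmatrix}, \qquad X := D_1' - \mathds{1}\alpha^t - \alpha\mathds{1}^t, \quad Y := D_2' - \mathds{1}\gamma^t - \gamma\mathds{1}^t.
$$
Applying the same operations separately to $D(G_1)$ and $D(G_2)$ and invoking Lemma~\ref{Lem:cof} identifies $\det X = \cof D(G_1)$ and $\det Y = \cof D(G_2)$.

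Since row and column operations preserve the determinant, $\det D(G) = \det M$, and Lemma~\ref{Lem:cof} simultaneously gives $\cof D(G) = \det M(1\mid 1) = \det X \cdot \det Y$, which is the cofactor identity. For the determinant, I would expand $M$ by the bordered Schur-complement formula, working with $\mbox{Adj}$ rather than with inverses to sidestep invertibility assumptions:
$$
\det M = -\alpha^t\, \mbox{Adj}(X)\, \alpha \cdot \det Y \;-\; \gamma^t\, \mbox{Adj}(Y)\, \gamma \cdot \det X.
$$
The same bordered expansion applied to each $D(G_k)$ shows that $-\alpha^t \mbox{Adj}(X) \alpha = \det D(G_1)$ and $-\gamma^t \mbox{Adj}(Y) \gamma = \det D(G_2)$, yielding the determinant identity. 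The main obstacle is the bookkeeping in this bordered-determinant step, particularly confirming that the identity remains valid when $X$ or $Y$ is singular; using the adjugate throughout, or a polynomial-continuity argument in the entries of $D(G)$, handles this case uniformly.
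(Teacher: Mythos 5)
The paper offers no proof of this theorem: it is quoted directly from Graham, Hoffman and Hosoya~\cite{Gr3}, so there is no in-paper argument to compare yours against. Your proof is correct and self-contained, and it follows what is essentially the classical route: induct on the block--cut tree, reduce to the fusion of two graphs at a single cut vertex $v$, use the additivity $d_G(i,j)=d_{G_1}(i,v)+d_{G_2}(v,j)$ of mixed distances to make the coupling blocks rank two, annihilate them with the row/column operations of Lemma~\ref{Lem:cof} to reach the matrix $M$ with $\cof D(G)=\det M(1\mid 1)=\det X\,\det Y$, and finish with a bordered determinant expansion. Two minor remarks. First, the additivity of mixed distances, and the fact that $d_G$ restricted to $V(G_1)$ agrees with $d_{G_1}$, follow from $v$ being a cut vertex alone; the $2$-connectedness of the blocks is not what is doing the work there. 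Second, your formula $\det M=-\alpha^t\,\mbox{Adj}(X)\,\alpha\,\det Y-\gamma^t\,\mbox{Adj}(Y)\,\gamma\,\det X$ is a genuine polynomial identity in the entries of $M$: it is the bordered-determinant identity $\det B=a\det Z-u^t\,\mbox{Adj}(Z)\,w$ (for $B$ obtained by bordering $Z$ with row $u^t$, column $w$ and corner entry $a$) applied with $Z=\operatorname{diag}(X,Y)$, combined with $\mbox{Adj}(\operatorname{diag}(X,Y))=\operatorname{diag}\bigl(\det Y\cdot\mbox{Adj}(X),\ \det X\cdot\mbox{Adj}(Y)\bigr)$; both are identities of polynomials, so the singular case is automatically covered and the continuity argument you mention is not even needed. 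The same bordered identity with $a=0$ and $Z=X$ gives $\det D(G_1)=-\alpha^t\,\mbox{Adj}(X)\,\alpha$, which closes the two-block step exactly as you describe, and the induction over a leaf block of the block--cut tree assembles the general formulas without difficulty.
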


Let $B$ be an $n\times n$ matrix partitioned as
\begin{equation}\label{eqn:B}
B= \left[
\begin{array}{c|c}
B_{11}& B_{12} \\
\midrule
B_{21} &B_{22}
\end{array}
\right],
\end{equation}
where $ B_{11}$ and $B_{22}$ are square matrices. If  $B_{22}$ is nonsingular, then the Schur complement of $B_{22}$ in $B$ is defined to be the matrix $B/B_{22}=B_{11} -B_{12}B_{22}^{-1}B_{21}$.

The next result gives us the inverse of a partitioned matrix using Schur complement, whenever the matrix is invertible.
\begin{prop}\label{prop:schur}\cite{Zhang1}
Let $B$ be a nonsingular matrix partitioned as in Eqn~(\ref{eqn:B}). If  $B_{22}$ is square and invertible, then
\[
B^{-1}=\left[
\begin{array}{c|c}
(B/B_{22})^{-1}& -(B/B_{22})^{-1}B_{12}B_{22}^{-1}  \\
\midrule
-B_{22}^{-1}B_{21}(B/B_{22})^{-1} & B_{22}^{-1}+B_{22}^{-1}B_{21}(B/B_{22})^{-1}B_{12}B_{22}^{-1}
\end{array}
\right].
\]
\end{prop}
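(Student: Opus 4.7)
The plan is to verify the formula by decomposing $B$ into a product of three block factors whose inverses are essentially transparent. I would begin by establishing the block $LDU$ factorization
$$B = \begin{pmatrix} I & B_{12}B_{22}^{-1} \\ 0 & I \end{pmatrix} \begin{pmatrix} B/B_{22} & 0 \\ 0 & B_{22} \end{pmatrix} \begin{pmatrix} I & 0 \\ B_{22}^{-1}B_{21} & I \end{pmatrix},$$
which reduces to multiplying out the right-hand side and recognizing that the $(1,1)$-block collapses to $B_{11}$ via the identity $(B/B_{22}) + B_{12}B_{22}^{-1}B_{21} = B_{11}$, while the other three blocks appear immediately.

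Before inverting this factorization, I would pause to justify that the Schur complement $B/B_{22}$ is itself invertible. Taking determinants in the factorization gives $\det B = \det(B/B_{22}) \cdot \det B_{22}$, since the two unipotent block-triangular factors have determinant $1$. As $B$ and $B_{22}$ are both nonsingular by hypothesis, we conclude $\det(B/B_{22}) \neq 0$, so $(B/B_{22})^{-1}$ exists and every block appearing in the target formula is well-defined.

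With the factorization in hand, I would invert the product in reverse order. The two unipotent triangular factors invert by simply negating their off-diagonal blocks, and the middle block-diagonal factor inverts block-by-block. Multiplying out
$$B^{-1} = \begin{pmatrix} I & 0 \\ -B_{22}^{-1}B_{21} & I \end{pmatrix} \begin{pmatrix} (B/B_{22})^{-1} & 0 \\ 0 & B_{22}^{-1} \end{pmatrix} \begin{pmatrix} I & -B_{12}B_{22}^{-1} \\ 0 & I \end{pmatrix}$$
then yields exactly the four blocks stated in the proposition, with the $(2,2)$-block taking its slightly complicated form precisely because it is the sum of $B_{22}^{-1}$ (coming from the middle factor) and the term $B_{22}^{-1}B_{21}(B/B_{22})^{-1}B_{12}B_{22}^{-1}$ produced by the two outer triangular factors acting on the $(1,1)$-block of the middle factor.

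There is no serious obstacle in this argument; everything reduces to routine block multiplication once the factorization is written down. The only non-cosmetic point I would not skip past is the invertibility of $B/B_{22}$, since the very statement of the proposition presupposes that one may form $(B/B_{22})^{-1}$.
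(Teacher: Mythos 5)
Your argument is correct and complete. The paper itself offers no proof of this proposition: it is quoted verbatim from the reference \cite{Zhang1} as a standard tool, so there is nothing in the text to compare your derivation against. The route you take --- the block $LDU$ factorization with unipotent triangular outer factors and the block-diagonal middle factor $\mathrm{diag}(B/B_{22},\,B_{22})$, followed by inversion of the three factors in reverse order --- is the classical proof found in the cited source, and every step checks out: the $(1,1)$-block of the product collapses to $B_{11}$ by the definition of the Schur complement, the determinant identity $\det B=\det(B/B_{22})\det B_{22}$ legitimately establishes the invertibility of $B/B_{22}$ (a point worth making explicit, as you do, since the stated formula presupposes it), and the final block multiplication reproduces all four blocks of the asserted inverse, including the perturbed $(2,2)$-block $B_{22}^{-1}+B_{22}^{-1}B_{21}(B/B_{22})^{-1}B_{12}B_{22}^{-1}$.
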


Next we state a result without proof that gives the  eigenvalues and inverse for matrix of the  form $aI+bJ$.
\begin{lem}\label{lem:aI+bJ}
Let  $n \geq 2$   and $J_n$, $I_n$ be matrices as defined before. For $a \neq 0$, the eigenvalues of $aI_n+bJ_n$ are $a$ with multiplicity $n-1$, $a+nb$ with multiplicity $1$ and the determinant is given by $a^{n-1}(a+nb)$. Moreover, the matrix is invertible if and only if $a+nb \neq 0$ and the inverse is given by $$(aI_n + bJ_n)^{-1} = \frac{1}{a} \left(I_n - \frac{b}{a+nb} J_n\right).$$
\end{lem}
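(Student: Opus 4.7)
The plan is to reduce everything to standard facts about the rank-one matrix $J_n$. First I would observe that $J_n$ is symmetric of rank $1$, with $J_n\mathds{1}_n = n\mathds{1}_n$ and $J_n x = 0$ for every $x$ orthogonal to $\mathds{1}_n$. Hence the eigenvalues of $J_n$ are $n$ (simple, eigenvector $\mathds{1}_n$) and $0$ (multiplicity $n-1$, eigenspace $\mathds{1}_n^{\perp}$). Since $aI_n + bJ_n$ acts on the same eigenbasis, its eigenvalues are obtained by the shift $\lambda \mapsto a + b\lambda$, giving $a+nb$ (simple) and $a$ (multiplicity $n-1$), as claimed.

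Next I would read off the determinant as the product of the eigenvalues, yielding $\det(aI_n + bJ_n) = a^{n-1}(a+nb)$. Assuming $a \neq 0$, this is nonzero precisely when $a+nb \neq 0$, which gives the invertibility criterion.

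For the explicit inverse, I would simply verify the proposed formula by direct multiplication, using the key identity $J_n^2 = nJ_n$. Computing
\[
(aI_n + bJ_n)\cdot \frac{1}{a}\!\left(I_n - \frac{b}{a+nb}J_n\right) = I_n + \frac{b}{a}J_n - \frac{b}{a+nb}J_n - \frac{b^2}{a(a+nb)}J_n^2,
\]
and replacing $J_n^2$ by $nJ_n$, I would collect the coefficients of $J_n$ and check that they simplify to zero (the arithmetic reduces to $b(a+nb) - ab - nb^2 = 0$), leaving $I_n$. This confirms the formula.

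No step is really an obstacle; the main point is simply to marshal the rank-one structure of $J_n$ and to carry out the verification of the inverse cleanly. One could alternatively derive the inverse ansatz $\alpha I_n + \beta J_n$ and solve for $\alpha,\beta$ using $J_n^2 = nJ_n$, but direct verification is the shortest route.
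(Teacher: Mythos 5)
Your proof is correct. The paper states Lemma~\ref{lem:aI+bJ} explicitly without proof, so there is no argument to compare against; your route --- diagonalizing via the rank-one structure of $J_n$ (eigenvalues $n$ and $0$) to get the spectrum and determinant, then verifying the inverse directly using $J_n^2 = nJ_n$ --- is the standard one and all the arithmetic checks out.
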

We conclude this section with a standard result on computing the determinant of block matrices.

\begin{prop}\label{prop:blockdet}~\cite{Zhang}
 Let $A_{11}
   \mbox{ and } A_{22}$ be square matrices.  Then 
   \begin{small}
   $$\det \left[
\begin{array}{c|c}
A_{11} & \bigzero  \\
\midrule
A_{21} & A_{22}
\end{array}
\right] = \det A_{11} \det A_{22}.$$
   \end{small}   
\end{prop}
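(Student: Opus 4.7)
The plan is to prove this directly from the Leibniz expansion of the determinant, which makes the block-triangular structure transparent and requires no invertibility hypotheses on either block. Write $M$ for the full matrix, let $n_1, n_2$ be the orders of $A_{11}, A_{22}$, and set $n = n_1 + n_2$. By definition,
$$\det M = \sum_{\sigma \in S_n}\text{sgn}(\sigma)\prod_{i=1}^{n} M_{i,\sigma(i)}.$$
Because the upper-right block is zero, the factor $M_{i,\sigma(i)}$ vanishes whenever $i \leq n_1$ and $\sigma(i) > n_1$. Hence the only permutations that can contribute are those sending $\{1,\ldots,n_1\}$ into itself, and by bijectivity such a $\sigma$ automatically restricts to a permutation of $\{n_1+1,\ldots,n\}$ as well. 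Every surviving $\sigma$ therefore factors uniquely as $\sigma = \sigma_1 \cdot \sigma_2$ with $\sigma_1 \in S_{n_1}$ acting on the top block of indices and $\sigma_2 \in S_{n_2}$ acting on the bottom block (after the evident index shift).

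Using the multiplicativity $\text{sgn}(\sigma) = \text{sgn}(\sigma_1)\text{sgn}(\sigma_2)$ and noting that the first $n_1$ entry-factors depend only on $A_{11}$ while the last $n_2$ depend only on $A_{22}$, the constrained double sum splits cleanly as a product:
$$\det M = \Biggl(\sum_{\sigma_1 \in S_{n_1}}\text{sgn}(\sigma_1)\prod_{i=1}^{n_1}[A_{11}]_{i,\sigma_1(i)}\Biggr)\Biggl(\sum_{\sigma_2 \in S_{n_2}}\text{sgn}(\sigma_2)\prod_{j=1}^{n_2}[A_{22}]_{j,\sigma_2(j)}\Biggr) = \det A_{11}\,\det A_{22}.$$
The block $A_{21}$ never enters the computation, which is the whole point of block-triangularity.

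The only step requiring any real care is the combinatorial observation that surviving permutations factor, and this is essentially immediate from the constraint $\sigma(\{1,\ldots,n_1\}) \subseteq \{1,\ldots,n_1\}$ plus bijectivity; there is no genuine obstacle here, which is consistent with the result being treated as standard in the text. If one preferred to avoid the symmetric-group formalism, an equivalent route would write $M$ as a product of two simpler block-triangular matrices, one having the identity as its bottom-right block and the other having the identity as its top-left block, compute each factor's determinant by iterated cofactor expansion along rows/columns containing a single $1$, and multiply; but this alternative reproduces the same bookkeeping in a less economical way.
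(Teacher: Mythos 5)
Your Leibniz-expansion argument is correct and complete: the vanishing of the upper-right block forces any contributing permutation to stabilize $\{1,\ldots,n_1\}$, bijectivity then forces it to stabilize $\{n_1+1,\ldots,n\}$ as well, and the sum factors by multiplicativity of the sign. The paper itself gives no proof to compare against --- Proposition~\ref{prop:blockdet} is quoted as a standard fact from \cite{Zhang} --- so your proof simply supplies a standard justification that the authors chose to omit.
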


This article is organized as follows.  In Sections~\ref{sec:Single-m-partitite} and~\ref{sec:inverse-Single-m-partitite}, we find the cofactor and determinant  of the distance matrix for a complete $m$-partite graph for $m\geq 3$ and  compute its inverse whenever it exists. In Section~\ref{sec:multi-block}, we compute the inverse of multi-block graphs subject to the condition that cofactor is nonzero.  Finally, Section~\ref{sec:T6-Tn-b} deals with a class of multi-block graphs, not covered in Section~\ref{sec:multi-block}.
\vspace*{-.3cm}

\section{Cofactor and Determinant  of $D(K_{n_1,n_2,\cdots,n_m})$}\label{sec:Single-m-partitite}
Let $D(K_{n_1,n_2,\cdots,n_m})$ be the distance matrix of complete $m$-partite graph $K_{n_1,n_2,\cdots,n_m}$. Then the matrix $D(K_{n_1,n_2,\cdots,n_m})$ can be expressed in the following block form 

\begin{small}
\begin{equation}\label{eqn:D(K)}
D(K_{n_1,n_2,\cdots,n_m}) = \left[\begin{array}{c|c|c|c}
2(J_{n_1} - I_{n_1}) & J_{n_1 \times n_2} & \cdots & J_{n_1 \times n_m}\\
\hline
J_{n_2 \times n_1} & 2(J_{n_2} - I_{n_2}) & \cdots & J_{n_2 \times n_m}\\
\hline
\vdots & \cdots  & \ddots &\vdots\\
\hline
J_{n_m \times n_1} & J_{n_m \times n_2} &  \cdots & 2(J_{n_m} - I_{n_m})\\
\end{array} \right].
\end{equation}
\end{small}

We begin with a lemma which will be used to compute the cofactor of $ D(K_{n_1,n_2,\cdots,n_m})$.

\begin{lem}\label{Lem:Det_3}
Let $C_m$ be a square matrix of order $m$ as following
\begin{small}
\begin{equation*}\label{eqn:M_3}
C_m = \left[\begin{array}{ccccc}
n_1  & 2(n_1 - 1)& 2(n_1 - 1) & \cdots & 2(n_1 - 1)\\
n_2 & 2 & n_2 &\cdots & n_2\\
n_3 & n_3 & 2 &\cdots & n_3\\
\vdots & \vdots  &\vdots  & \ddots &\vdots\\
n_m  & n_m & n_m &  \cdots & 2\\
\end{array} \right].
\end{equation*} 
\end{small}
The determinant of the above matrix is given by 
$$\det C_m = (-1)^{m-1}\sum_{i=1}^m \left( n_i\prod_{j \neq i}(n_j - 2) \right).$$
\end{lem}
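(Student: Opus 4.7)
The plan is to recognize $C_m$ as a rank-one perturbation of a diagonal matrix and apply the matrix determinant lemma. A direct entry-by-entry check shows
\[
C_m \;=\; \mathrm{diag}(2-n_1,\,2-n_2,\,\ldots,\,2-n_m) + v\,\mathds{1}_m^t, \qquad v=\bigl(2(n_1-1),\,n_2,\,n_3,\,\ldots,\,n_m\bigr)^t,
\]
because adding $v_i$ to the row of $D=\mathrm{diag}(2-n_1,\ldots,2-n_m)$ recovers precisely the off-diagonal entries of $C_m$, while the diagonal correction $(2-n_i)$ turns these into the correct diagonal values ($(2-n_1)+2(n_1-1)=n_1$ and $(2-n_i)+n_i=2$).

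With this decomposition in hand, I would assume provisionally that every $n_i\neq 2$ so that $D$ is invertible, and invoke the matrix determinant lemma to write
\[
\det C_m \;=\; \det(D)\bigl(1 + \mathds{1}_m^t D^{-1} v\bigr) \;=\; \Bigl(\prod_{i=1}^m (2-n_i)\Bigr)\Bigl(1 + \mathds{1}_m^t D^{-1} v\Bigr).
\]
The scalar $\mathds{1}_m^t D^{-1} v = \sum_i v_i/(2-n_i)$ is almost a symmetric sum $\sum_i n_i/(2-n_i)$ except for the $i=1$ term. The key algebraic move is the rewrite $2(n_1-1)=-(2-n_1)+n_1$, which makes the first coordinate of $D^{-1}v$ equal to $-1+n_1/(2-n_1)$; the extra $-1$ cancels the ``$+1$'' of the lemma and leaves $1+\mathds{1}_m^tD^{-1}v=\sum_{i=1}^m n_i/(2-n_i)$. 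Distributing $\prod_i(2-n_i)$ back in and pulling $(-1)^{m-1}$ out of each product $\prod_{j\ne i}(2-n_j)=(-1)^{m-1}\prod_{j\ne i}(n_j-2)$ then delivers the stated identity.

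To remove the technical hypothesis $n_i\ne 2$, I would invoke a polynomial-identity argument: both sides of the claim are polynomials in $n_1,\ldots,n_m$ that agree on the Zariski-dense open set $\{n_i\ne 2 \text{ for all } i\}$, hence they coincide as polynomials. If a more elementary treatment is preferred, one can first subtract column $1$ from every other column (turning row $1$ into $(n_1,n_1-2,\ldots,n_1-2)$ and each row $i\ge 2$ into a vector with at most two nonzero entries), then expand row $1$ via multilinearity as $n_1e_1^t+(n_1-2)(\mathds{1}_m^t-e_1^t)$; of the resulting $2^{m-1}$ Laplace terms only $m$ survive (any term placing $(n_1-2)$ in two distinct columns has two columns parallel to $e_1$), and each surviving term is a product of diagonal-type entries contributing exactly one summand of the claimed sum.

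The main obstacle is essentially cosmetic rather than structural: the asymmetry between row $1$ (off-diagonal entry $2(n_1-1)$) and the remaining rows (off-diagonal entry $n_i$) is what forces the small rewrite $2(n_1-1)=(n_1-2)+n_1$, and this rewrite is precisely what turns the computation into a symmetric expression in the $n_i$'s despite the apparent asymmetry of $C_m$.
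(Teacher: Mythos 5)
Your proposal is correct, but your primary route differs from the paper's. You write $C_m = \mathrm{diag}(2-n_1,\ldots,2-n_m) + v\,\mathds{1}_m^t$ and apply the matrix determinant lemma; the decomposition checks out entry by entry, the rewrite $2(n_1-1) = -(2-n_1)+n_1$ correctly cancels the ``$+1$'', and the Zariski-density (polynomial identity) argument legitimately removes the provisional hypothesis $n_i\neq 2$. The paper instead proceeds purely by elementary operations: subtract the first column from all others, which puts every row $i\geq 2$ into the form $(n_i,0,\ldots,0,2-n_i,0,\ldots,0)$, and then expand along the first row to get $\det C_m = n_1\prod_{j\geq 2}(2-n_j) - (n_1-2)\sum_{i\geq 2} n_i\prod_{j\neq i}(2-n_j)$ directly, with no invertibility caveat to discharge. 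Your fallback sketch (column subtraction plus multilinear expansion of row $1$) is essentially the paper's proof. What your main approach buys is a conceptual explanation of why the answer is the symmetric expression $\prod_i(2-n_i)\sum_i n_i/(2-n_i)$ despite the asymmetric first row; what the paper's approach buys is brevity and the avoidance of both the auxiliary lemma and the limiting/density step.
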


\begin{proof}
Subtracting the first column form all the remaining columns of $C_m$ yields the following matrix:
\begin{small}
$$
\left[\begin{array}{ccccc}
n_1  & n_1-2 & n_1-2 & \cdots & n_1-2\\
n_2 & 2-n_2 & 0 &\cdots & 0\\
n_3 & 0 & 2-n_3  &\cdots & 0\\
\vdots & \vdots  &\vdots  & \ddots &\vdots\\
n_m  & 0 & 0 &  \cdots & 2-n_m \\
\end{array} \right].
$$
\end{small}
 Now expanding along the first row, we get 
$$\det C_m = n_1 \prod_{j=2}^m (2-n_m)-(n_1-2)\sum_{i=2}^m n_i \prod_{j\neq i}(2-n_j)$$
 and the desired result follows.
\end{proof}

\begin{theorem}\label{thm:cof}
Let $D(K_{n_1,n_2,\cdots,n_m})$ be the distance matrix of complete $m$-partite graph $K_{n_1,n_2,\cdots,n_m}$ on $|V|=\sum_{i=1}^m n_i$ vertices. Then the cofactor of the distance matrix is given by
$$\cof D(K_{n_1,n_2,\cdots,n_m}) = (-2)^{|V|-m}\left[ \sum_{i=1}^m \left( n_i\prod_{j \neq i}(n_j - 2) \right)\right].$$
\end{theorem}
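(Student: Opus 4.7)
The strategy is to use Lemma \ref{Lem:cof} to reduce the cofactor to an explicit determinant, then exploit a low-rank structure to reduce that determinant to an $m\times m$ one matching $C_m$ from Lemma \ref{Lem:Det_3}.

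Arrange the vertices so that the first row/column of $D(K_{n_1,n_2,\ldots,n_m})$ corresponds to a vertex of partition $1$. By Lemma \ref{Lem:cof}, $\cof D(K_{n_1,n_2,\ldots,n_m})=\det M(1\mid 1)$, where $M_{k\ell}=D_{k\ell}-D_{1\ell}-D_{k1}+D_{11}$ for $k,\ell\geq 2$. A short case analysis on which partitions contain $k$ and $\ell$ yields the block description of $M(1\mid 1)$: the $(1,1)$-block (of size $n_1-1$) equals $-2I_{n_1-1}-2J_{n_1-1}$; each diagonal block $(i,i)$ with $i\geq 2$ equals $-2I_{n_i}$; the off-diagonal blocks $(1,j)$ and $(i,1)$ with $i,j\geq 2$ are all-$(-2)$ matrices; and the off-diagonal blocks $(i,j)$ with $i\neq j$ and $i,j\geq 2$ are all-$(-1)$ matrices.

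The key structural observation is that
$$M(1\mid 1)=-2\,I_{|V|-1}+UV^{t},$$
where $U$ is the $(|V|-1)\times m$ indicator matrix of partition membership and $V$ is constructed so that every row of $UV^{t}$ indexed by a vertex in partition $i$ equals one common vector $c_i$ (reflecting that, up to the diagonal $-2I$, the rows inside a fixed partition are all identical). The matrix determinant lemma then gives
$$\det M(1\mid 1)=(-2)^{|V|-1}\det\!\left(I_m-\tfrac{1}{2}V^{t}U\right).$$
Computing $V^{t}U$ entrywise shows that the first row of $I_m-\tfrac{1}{2}V^{t}U$ is $(n_1,n_2,\ldots,n_m)$, while for $i\geq 2$ its $i^{th}$ row has entry $n_1-1$ in column $1$, entry $1$ in column $i$, and entry $n_j/2$ in each remaining column $j\geq 2$. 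Multiplying rows $2,\ldots,m$ by $2$ (scaling the determinant by $2^{m-1}$) and transposing produces \emph{exactly} the matrix $C_m$ of Lemma \ref{Lem:Det_3}.

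Substituting $\det C_m=(-1)^{m-1}\sum_{i=1}^{m}n_i\prod_{j\neq i}(n_j-2)$ and combining the scalar factors via $(-2)^{|V|-1}\cdot 2^{-(m-1)}\cdot(-1)^{m-1}=(-2)^{|V|-m}$ yields the stated formula. The main obstacle is spotting the correct low-rank decomposition $UV^{t}$ and recognising the resulting $m\times m$ determinant as a scaling/transpose of $C_m$; after this, everything reduces to careful bookkeeping of signs and powers of two.
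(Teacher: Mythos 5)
Your proof is correct: I checked the block description of $M(1\mid 1)$, the rank-$m$ decomposition $M(1\mid 1)=-2I_{|V|-1}+UV^{t}$, the entries of $I_m-\tfrac12 V^tU$, and the bookkeeping $(-2)^{|V|-1}\cdot 2^{-(m-1)}\cdot(-1)^{m-1}=(-2)^{|V|-m}$; all of it holds. The skeleton is the same as the paper's (Lemma~\ref{Lem:cof} to pass to $\det M(1\mid 1)$, then Lemma~\ref{Lem:Det_3} to evaluate an $m\times m$ determinant), but the middle step is genuinely different. The paper reduces $\det M(1\mid 1)$ by explicit elementary row and column operations within each partition block, permutes the distinguished rows and columns to the front, and invokes Proposition~\ref{prop:blockdet} on the resulting block-triangular matrix with blocks $-\widetilde{C}_m$ and $-2I_{|V|-(m+1)}$; you instead observe that $M(1\mid 1)+2I$ has constant rows within each partition, hence rank at most $m$, and apply the matrix determinant lemma $\det(A+UV^t)=\det A\,\det(I+V^tA^{-1}U)$ to collapse directly to an $m\times m$ determinant that is a row-scaled transpose of $C_m$. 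Your route is slicker and, as a small bonus, handles the degenerate cases uniformly: when $n_1=1$ (including $K_m$, where all $n_i=1$) the first partition contributes an empty block and a zero column of $U$, and the resulting $m\times m$ matrix is still $C_m$, so you avoid the paper's separate treatment of $K_m$ and its ``without loss of generality $n_1>1$'' step. The trade-off is that you rely on the matrix determinant lemma, which is standard but not among the paper's stated preliminaries, whereas the paper's argument uses only the elementary operations and the block-determinant formula it has already set up.
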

\begin{proof}
For complete $m$-partite graph $K_{n_1,n_2,\cdots,n_m}$ with $n_i=1$ for all $1\leq i\leq m$, we have $K_{n_1,n_2,\cdots,n_m}=K_m$. Then, the result is true as $\cof D(K_m)= (-1)^{m-1} m$. For other cases, without loss of generality, let $n_1>1$ and  $M$ be the matrix obtained from $D(K_{n_1,n_2,\cdots,n_m})$ by subtracting the first row from all other rows and then subtracting the first column from all other columns. Then the block form of the matrix $M(1|1)$ is given by
\begin{small}
$$
 \left[\begin{array}{c|c|c|c}
-2(J_{n_1-1} + I_{n_1-1}) & -2J_{(n_1-1) \times n_2} & \cdots & -2J_{(n_1-1) \times n_m}\\
\hline
-2J_{n_2 \times (n_1-1)} &  -2I_{n_2} & \cdots & -J_{n_2 \times n_m}\\
\hline
\vdots & \cdots  & \ddots &\vdots\\
\hline
-2J_{n_m \times (n_1-1)} & -J_{n_m \times n_2} &  \cdots &  -2I_{n_m}\\
\end{array} \right].
$$
\end{small}
 First for each partition of $M(1|1)$ we subtract the first column from all other columns, then add all the rows to the first row. Further, we shift the first column of all the $m$-partition to the first  $m$ columns and repeat the same operation for the rows. Then the resulting matrix is of the following block form:
\begin{small}
$$
\left[
\begin{array}{c|c}
-\widetilde{C}_m& \mathbf{0} \\
\hline
* & -2I_{|V|-(m+1)}
\end{array}
\right], 
$$ where
$$
\widetilde{C}_m = \left[\begin{array}{ccccc}
2n_1  & 2(n_1 - 1)& 2(n_1 - 1) & \cdots & 2(n_1 - 1)\\
2n_2 & 2 & n_2 &\cdots & n_2\\
2n_3 & n_3 & 2 &\cdots & n_3\\
\vdots & \vdots  &\vdots  & \ddots &\vdots\\
2n_m  & n_m & n_m &  \cdots & 2\\
\end{array} \right].$$
\end{small}
Using  Proposition~\ref{prop:blockdet}, Lemmas~\ref{Lem:cof} and \ref{Lem:Det_3} the result follows.
\end{proof}

The determinant of $D(K_{n_1,n_2,\cdots,n_m})$ can be obtained using elementary matrix operations similar to the computation of cofactor of $D(K_{n_1,n_2,\cdots,n_m})$, but the same was obtained in~\cite[Corollary 2.5]{Zhou1} as a consequence of~\cite[Theorem 2.4]{Zhou1}, which can be used  for a wider class of graphs. Thus we state the result for the determinant of $D(K_{n_1,n_2,\cdots,n_m})$ without proof.

\begin{theorem}\label{thm:detD_single}
Let $D(K_{n_1,n_2,\cdots,n_m})$ be the distance matrix of complete $m$-partite graph $K_{n_1,n_2,\cdots,n_m}$ on $|V|=\sum_{i=1}^m n_i$ vertices. Then the determinant of the distance matrix is given by
$$\det D(K_{n_1,n_2,\cdots,n_m}) = (-2)^{|V|-m}\left[ \sum_{i=1}^m \left( n_i\prod_{j \neq i}(n_j - 2) \right) +  \prod_{i=1}^m (n_i - 2)\right].$$ 
\end{theorem}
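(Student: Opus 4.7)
The plan is to avoid repeating the elementary-operations argument from Theorem~\ref{thm:cof} and instead exploit a clean additive decomposition of $D(K_{n_1,n_2,\ldots,n_m})$ that combines with the matrix determinant lemma to reduce the computation to quantities already in hand.

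First I would observe that the block form in Eqn~(\ref{eqn:D(K)}) admits the decomposition $D(K_{n_1,n_2,\ldots,n_m}) = J_{|V|} + B$, where $B$ is the block-diagonal matrix
\begin{equation*}
B \;=\; \bigoplus_{i=1}^{m}\bigl(J_{n_i} - 2 I_{n_i}\bigr).
\end{equation*}
This is a direct verification: the off-diagonal blocks agree on both sides, and on each diagonal block one has $2(J_{n_i} - I_{n_i}) = J_{n_i} + (J_{n_i} - 2 I_{n_i})$. Since $J_{|V|} = \mathds{1}\mathds{1}^t$ has rank one, the adjugate form of the matrix determinant lemma yields
\begin{equation*}
\det D(K_{n_1,n_2,\ldots,n_m}) \;=\; \det B + \mathds{1}^t \mathrm{Adj}(B)\,\mathds{1} \;=\; \det B + \cof B,
\end{equation*}
and this identity does not require $B$ to be invertible, thereby handling the degenerate case $n_i = 2$ uniformly.

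Next I would compute $\det B$ from block-diagonality and Lemma~\ref{lem:aI+bJ} applied with $a = -2$, $b = 1$: each factor contributes $\det(J_{n_i} - 2 I_{n_i}) = (-2)^{n_i - 1}(n_i - 2)$, hence $\det B = (-2)^{|V| - m} \prod_{i=1}^{m}(n_i - 2)$, which is precisely the extra term appearing in the target formula. For $\cof B$ I would invoke Lemma~\ref{Lem:cof}: the operations ``subtract row $1$ from rows $2,\ldots,|V|$'' followed by the analogous column operations annihilate every entry of $J_{|V|}$ except its $(1,1)$ entry, so the submatrix $M(1 \mid 1)$ is the same whether the operations are applied to $D(K_{n_1,n_2,\ldots,n_m})$ or to $B$. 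Consequently $\cof B = \cof D(K_{n_1,n_2,\ldots,n_m})$, whose value has already been established in Theorem~\ref{thm:cof}. Adding the two contributions then produces the stated formula.

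I do not anticipate a serious obstacle; the decomposition $D = J_{|V|} + B$ is the heart of the argument, and once it is in place the rest is bookkeeping. The only point requiring a little care is using the adjugate form $\det(B + uv^t) = \det B + v^t\,\mathrm{Adj}(B)\,u$ rather than the Sherman--Morrison-style identity that assumes $B$ is invertible, which is what lets the argument go through uniformly across all choices of the $n_i$.
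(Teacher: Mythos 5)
Your proof is correct, and it takes a different route from the paper, which in fact gives no proof of this theorem at all: it cites~\cite[Corollary 2.5]{Zhou1} and remarks only that the formula could be obtained by elementary row and column operations parallel to those in the proof of Theorem~\ref{thm:cof}. Your argument is self-contained modulo Theorem~\ref{thm:cof}. The decomposition $D = J_{|V|} + B$ with $B = \bigoplus_i (J_{n_i}-2I_{n_i})$ is a direct check, the adjugate form $\det(B+uv^t)=\det B + v^t\,\mathrm{Adj}(B)\,u$ is a polynomial identity valid without invertibility of $B$, and your observation that the Lemma~\ref{Lem:cof} operations are of the form $A \mapsto EAE^t$ (hence linear in $A$) and annihilate all of $J_{|V|}$ outside the $(1,1)$ entry correctly gives $\cof B = \cof D$. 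What your approach buys is twofold: it avoids redoing the block eliminations of Theorem~\ref{thm:cof}, and it makes structurally transparent why $\det D$ and $\cof D$ differ by exactly the term $(-2)^{|V|-m}\prod_i(n_i-2)$ --- this is precisely the identity $\beta_{\mathcal{I}}=\gamma_{\mathcal{I}}+\alpha_{\mathcal{I}}$ of Lemma~\ref{lem:index}, which here acquires the interpretation $\det D = \cof B + \det B$. What the paper's citation buys instead is generality: \cite[Theorem 2.4]{Zhou1} applies to a wider class of graphs. One trivial point of hygiene: Lemma~\ref{lem:aI+bJ} is stated for $n\geq 2$, so for blocks with $n_i=1$ you should note directly that $\det(J_1-2I_1)=-1=(-2)^{0}(1-2)$, which keeps the product formula $\det B = (-2)^{|V|-m}\prod_i(n_i-2)$ valid in all cases.
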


Before proceeding further we first  prove a simple lemma  and introduce few notations useful for the subsequent results.
\begin{lem}\label{lem:sum}
Let $p$ be a positive integer, and let $r$ be an integer with $1 \le r \le 2p$. Then the equation
$$\sum_{i=1}^p  \frac{1}{q_i} = \frac{r}{2}$$
in $q_1,q_2,\cdots,q_p$ has a positive integer solution.
\end{lem}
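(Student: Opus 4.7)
The plan is to split into two cases according to the parity of $r$ and exhibit an explicit solution in each case. Since we have $p$ slots and the target $r/2$ lies between $1/2$ and $p$, the strategy is to use several $q_i=1$ to absorb the integer part of $r/2$, and then to choose a single common denominator for the remaining slots to take care of what is left over.

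For the even case $r=2k$ with $1\le k\le p$, I would set $k-1$ of the $q_i$ equal to $1$ and the remaining $p-k+1$ of them all equal to $p-k+1$. The first group contributes $k-1$ and the second group contributes $(p-k+1)\cdot\tfrac{1}{p-k+1}=1$, so the total is $k=r/2$. The required bounds $k-1\ge 0$ and $p-k+1\ge 1$ follow from $r\ge 2$ and $r\le 2p$ respectively.

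For the odd case $r=2k+1$, the hypothesis $1\le r\le 2p$ forces $0\le k\le p-1$. I would set $k$ of the $q_i$ equal to $1$ and the remaining $p-k$ of them all equal to $2(p-k)$. The first group contributes $k$ and the second contributes $(p-k)\cdot\tfrac{1}{2(p-k)}=\tfrac{1}{2}$, for a total of $k+\tfrac{1}{2}=r/2$. Again $k\ge 0$ and $p-k\ge 1$ are immediate from the hypothesis.

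There is no real obstacle; the claim is essentially combinatorial. The only care needed is to check that the constructions remain valid at the extremes, which is routine: $r=2p$ gives all $q_i=1$, $r=2$ gives all $q_i=p$, and $r=1$ gives all $q_i=2p$. Hence in each case an explicit positive integer solution exists, establishing the lemma.
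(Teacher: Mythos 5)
Your proposal is correct and follows essentially the same construction as the paper: take enough $q_i=1$ to absorb the integer part of $r/2$ and set the remaining slots to a common value ($p-k+1$ in the even case, $2(p-k)$ in the odd case). The only cosmetic difference is that you treat $r=1$ uniformly within the odd case, whereas the paper handles it separately.
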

\begin{proof}
Let $r$ be even. If $r = 2k$ for some  integer $k\geq 1$, then   choosing $$q_i=\begin{cases}
 1 & \mbox{ if } 1\leq i\leq k-1;\\
 p+1-k & \mbox{ if } k\leq i \leq p,
\end{cases}$$
we have $\ds \sum_{i=1}^p  \frac{1}{q_i}=  (k-1) + \dfrac{p-(k-1)}{p-(k-1)} =k=\dfrac{r}{2}.$  

Let $r$ be odd. If r=1, then choosing  $q_i=2p$ for $1\leq i\leq p$, we have $\ds \sum_{i=1}^p  \frac{1}{q_i} = \frac{1}{2}$. Next, if  $r = 2k+1$ for some  integer $k\geq 1$, then choosing 
$$q_i=\begin{cases}
 1 & \mbox{ if } 1\leq i\leq k;\\
 2(p-k) & \mbox{ if } k+1\leq i \leq p,
\end{cases}$$
we have $\ds \sum_{i=1}^p  \frac{1}{q_i}=  k + \dfrac{p-k}{ 2(p-k)} =k+\frac{1}{2}=\frac{2k+1}{2}=\dfrac{r}{2}.$ This completes the proof.
\end{proof}

Let $n_i\in \mathbb{N}$, $1\leq i\leq m$ and  $m\geq 2$, let us denote 
\begin{equation}\label{eqn:beta}
\begin{cases}
\ds \beta_{n_1, n_2,\cdots,n_m} = \sum_{i=1}^m  n_i\prod_{j \neq i}(n_j - 2)  +  \prod_{i=1}^m (n_i - 2),\\
\beta_{\widehat{n_i}}= \beta_{n_1, n_2,\cdots, n_{i-1},n_{i+1}\cdots,n_m},
\end{cases}
\end{equation}
and 
\begin{equation}\label{eqn:gamma}
\hspace*{-2.2cm}
\begin{cases}

\ds \gamma_{n_1, n_2,\cdots,n_m} = \sum_{i=1}^m  n_i\prod_{j \neq i}(n_j - 2),\\

\gamma_{\widehat{n_i}} =\gamma_{n_1, n_2,\cdots, n_{i-1},n_{i+1},\cdots,n_m}. 
\end{cases}
\end{equation}

 Now we are interested in the cases in which the determinant and cofactor of the distance matrix of $G=K_{n_1,n_2,\cdots,n_m}$ are zero. Unlike the case of complete bipartite graphs,   for complete $m$-partite graphs (with $m\geq 3$) the determinant and  cofactor vanishes for infinitely many partitions. A similar result for the determinant was obtained in~\cite[Corollary 2.5]{Zhou1}, but the result below provides some additional information useful for the subsequent section on multi-block graphs.

\begin{theorem}\label{thm:beta-l-1}
Let  $m\geq 2$ and $G=K_{n_1,n_2,\cdots,n_m}$. Then, $\det D(G)=0$ if and only if   either of the following holds:
\begin{enumerate}
\item[$(1)$] at least two $n_i$'s are $2$ for  $1\leq i\leq m$,

\item[$(2)$]  there exists   $l \in \mathbb{N}$ with $\ds \frac{m+1}{2} < l \leq \frac{3m+1}{4}$  such that  
$n_i=1$ for  $1\leq i\leq l$ and  $n_i > 2$ for $l+1\leq i\leq m$  with $$\ds 2\sum_{i=l+1}^m  \frac{1}{n_i-2} = 2l-(m+1).$$
\end{enumerate}
\end{theorem}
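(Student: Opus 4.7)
The plan is to invoke Theorem~\ref{thm:detD_single}, which writes $\det D(G) = (-2)^{|V|-m}\,\beta_{n_1,n_2,\ldots,n_m}$, and thereby reduce the vanishing of $\det D(G)$ to the vanishing of $\beta_{n_1,n_2,\ldots,n_m}$ in Eqn~\eqref{eqn:beta}. Because $\beta$ is symmetric in its arguments, the parts may be reordered freely; the proof is then a short case analysis on how many $n_i$'s equal $2$, equal $1$, or are at least $3$.

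For sufficiency, condition (1) is immediate: with two parts equal to $2$, every product $\prod_{j\neq i}(n_j-2)$ retains a zero factor, and $\prod_i (n_i-2)=0$ as well, so $\beta=0$. For condition (2), I would split the index set at $l$ and pull out the common factor $(-1)^l\prod_{j>l}(n_j-2)$. Using $n_i=1$ for $i\le l$ and a direct computation of the two partial sums, one obtains
$$
\beta \;=\; (-1)^l\Bigl(\prod_{j=l+1}^{m}(n_j-2)\Bigr)\Bigl[\,1-l+\sum_{i=l+1}^{m}\frac{n_i}{n_i-2}\,\Bigr].
$$
Since $\tfrac{n_i}{n_i-2}=1+\tfrac{2}{n_i-2}$, the bracket equals $m-2l+1+2\sum_{i>l}\tfrac{1}{n_i-2}$, which vanishes precisely under the arithmetic condition in (2).

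For necessity, assume $\beta=0$ and first dispatch any appearance of the value $2$. If some $n_j=2$, then every summand with $i\ne j$ contains the vanishing factor $(n_j-2)$, so $\beta$ collapses to $2\prod_{k\ne j}(n_k-2)$; vanishing of this forces a second part equal to $2$, and (1) holds. If no $n_i=2$, each $n_i$ lies in $\{1\}\cup\{n\ge 3\}$. Should every $n_i\ge 3$, every summand of $\beta$ is strictly positive, contradicting $\beta=0$; thus at least one part equals $1$. Let $l\ge 1$ count the parts equal to $1$ and reorder so that $n_1=\cdots=n_l=1$ and $n_{l+1},\ldots,n_m\ge 3$. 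The case $l=m$ is excluded because $\beta(K_m)=(-1)^{m-1}(m-1)\ne 0$ for $m\ge 2$, so $l\le m-1$. Applying the identity derived in the sufficiency step, $\beta=0$ becomes $2\sum_{i>l}\tfrac{1}{n_i-2}=2l-m-1$. Strict positivity of the left-hand side (since $m>l$) yields $l>\tfrac{m+1}{2}$, while the bound $\tfrac{1}{n_i-2}\le 1$ gives $2l-m-1\le 2(m-l)$, i.e.\ $l\le\tfrac{3m+1}{4}$, finishing condition (2).

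The one genuinely delicate step is bookkeeping the signs and factor extraction in the computation of $\beta$ when some parts are $1$ and the rest are $\ge 3$; once $(-1)^l\prod_{j>l}(n_j-2)$ is pulled out as a nonzero common factor, the remaining vanishing criterion is purely arithmetic, and both bounds on $l$ fall out immediately from positivity and from $\tfrac{1}{n_i-2}\le 1$.
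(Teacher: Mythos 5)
Your proposal is correct and follows essentially the same route as the paper: reduce $\det D(G)=0$ to $\beta_{n_1,\ldots,n_m}=0$ via Theorem~\ref{thm:detD_single}, derive the factorization $\beta=(-1)^l\prod_{i>l}(n_i-2)\bigl[2\sum_{i>l}\tfrac{1}{n_i-2}+(m+1-2l)\bigr]$, and extract the bounds on $l$ from positivity and $\tfrac{1}{n_i-2}\le 1$. Your necessity direction is, if anything, slightly more explicit than the paper's in dispatching the cases of a single part equal to $2$, all parts at least $3$, and all parts equal to $1$.
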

\begin{proof}
In view of Theorem~\ref{thm:detD_single}, since the result for case $(1)$ is easily  verifiable, so we proceed with the  case $(2)$. Since $\det D(G)$ is nonzero if exactly one of $n_i$ is $2$, so let us assume $n_i\neq 2$ for all $1\leq i\leq m$.  Then, $\det D(G)$ is zero only if  some $n_i$'s are $1$.  If $n_i =1$ for all $ i=1,2,\ldots, m$, then  $G=K_m$ and  $\det D(G)\neq 0.$ Next, for $l< m$, let us assume $n_i=1$ for  $1\leq i\leq l$ and $n_i>2$ for  $l+1\leq i\leq m$. Using Eqn.~\eqref{eqn:beta}, we get 
\begin{equation}\label{eqn:beta-l-1}
\beta_{n_1, n_2,\cdots,n_m} = (-1)^l \prod_{i=l+1}^m (n_i - 2) \left[2\sum_{i=l+1}^m  \frac{1}{n_i-2} + (m+1-2l)\right].
\end{equation}
Thus, by Theorem~\ref{thm:detD_single} we have  $\det D(G)=0$ if and only if  $\ds 2\sum_{i=l+1}^m  \frac{1}{n_i-2}   = 2l-(m+1)$. Using $n_i>2$ for  $l+1\leq i\leq m$, we get $\ds \frac{m+1}{2} < l \leq \frac{3m+1}{4}.$ Conversely, by  Lemma~\ref{lem:sum}, the sum
$$\ds 2\sum_{i=l+1}^m  \frac{1}{n_i-2} = 2l-(m+1), \mbox{ whenever  }\frac{m+1}{2} < l \leq \frac{3m+1}{4},$$
admit  integer solutions with  $n_i>2$   and using  Eqn.~\eqref{eqn:beta-l-1} the result follows.
\end{proof}

The next result gives a necessary and sufficient condition for which the cofactor of $D(K_{n_1,n_2,\cdots,n_m})$ is zero.
\begin{theorem}\label{thm:gamma-l-1}
Let  $m\geq 2$ and $G=K_{n_1,n_2,\cdots,n_m}$. Then, $\cof D(G)=0$ if and only if either of the following holds:  
\begin{enumerate}
\item[$(1)$] at least two $n_i$'s are $2$ for $1\leq i\leq m$, 

\item[$(2)$]   there exists   $l \in \mathbb{N}$ with $\ds  \frac{m}{2} < l \leq \frac{3m}{4}$  such that  
$n_i=1$ for  $1\leq i\leq l$ and  $n_i > 2$ for  $l+1\leq i\leq m$  with
$$\ds 2\sum_{i=l+1}^m  \frac{1}{n_i-2} = 2l-m.$$
\end{enumerate}
\end{theorem}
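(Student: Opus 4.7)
The plan is to mirror the strategy of Theorem~\ref{thm:beta-l-1}, but replace $\beta$ by $\gamma$. By Theorem~\ref{thm:cof}, $\cof D(G) = (-2)^{|V|-m}\,\gamma_{n_1,\ldots,n_m}$, so the problem reduces to characterizing when $\gamma_{n_1,\ldots,n_m}=0$, where $\gamma$ is defined in Eqn.~\eqref{eqn:gamma}. Note that $\gamma$ differs from $\beta$ precisely by the term $\prod_{i=1}^m(n_i-2)$; this missing term is what will shift the range of $l$ from $\frac{m+1}{2}<l\le\frac{3m+1}{4}$ to $\frac{m}{2}<l\le\frac{3m}{4}$.

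First I would dispose of case $(1)$. If exactly one $n_i$ equals $2$, say $n_1=2$, then the term $i=1$ in $\gamma$ equals $2\prod_{j>1}(n_j-2)$ and every other term carries a factor $(n_1-2)=0$, so $\gamma\ne 0$ provided no other $n_j=2$. If at least two of the $n_i$'s equal $2$, then every summand in $\gamma$ contains a vanishing factor $(n_j-2)$ and hence $\gamma=0$. So from now on I assume $n_i\ne 2$ for all $i$, i.e.\ each $n_i$ equals $1$ or is $\ge 3$.

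Next, without loss of generality reorder so that $n_1=\cdots=n_l=1$ and $n_{l+1},\ldots,n_m\ge 3$ for some $0\le l\le m$. Splitting the sum defining $\gamma$ at index $l$ and using that $\prod_{j\ne i}(n_j-2)$ carries sign $(-1)^{l-1}$ or $(-1)^l$ according to whether $i\le l$ or $i>l$, the computation factors cleanly as
\begin{equation*}
\gamma_{n_1,\ldots,n_m}=(-1)^l\prod_{j=l+1}^{m}(n_j-2)\left[\,-l+\sum_{i=l+1}^{m}\frac{n_i}{n_i-2}\,\right].
\end{equation*}
Using the identity $\frac{n_i}{n_i-2}=1+\frac{2}{n_i-2}$, the bracket becomes $(m-2l)+2\sum_{i=l+1}^{m}\frac{1}{n_i-2}$. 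Since $\prod_{j=l+1}^{m}(n_j-2)\ge 1>0$, we conclude that $\gamma=0$ if and only if
\begin{equation*}
2\sum_{i=l+1}^{m}\frac{1}{n_i-2}=2l-m.
\end{equation*}

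Finally I would pin down the admissible range of $l$. Positivity of each $\frac{1}{n_i-2}$ forces $2l-m>0$, i.e.\ $l>m/2$ (noting also that $l=m$ is impossible as the left side would be empty); while the bound $\frac{1}{n_i-2}\le 1$ yields $2l-m\le 2(m-l)$, i.e.\ $l\le 3m/4$. For the converse direction, given any integer $l$ with $m/2<l\le 3m/4$, I set $p=m-l$ and $r=2l-m$, which satisfy $1\le r\le 2p$, and then invoke Lemma~\ref{lem:sum} to produce positive integers $q_1,\ldots,q_p$ with $\sum 1/q_i=r/2$; setting $n_{l+i}=q_i+2\ge 3$ gives a valid configuration realizing $\gamma=0$. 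The main bookkeeping obstacle is keeping track of the sign $(-1)^l$ and ensuring the algebraic manipulation that replaces $\frac{n_i}{n_i-2}$ produces precisely the shift from $m+1-2l$ (in Theorem~\ref{thm:beta-l-1}) to $m-2l$ here; once that identity is in hand, the rest is a direct parallel to the proof of Theorem~\ref{thm:beta-l-1}.
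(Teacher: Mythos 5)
Your proposal is correct and follows essentially the same route as the paper: reduce to the vanishing of $\gamma_{n_1,\ldots,n_m}$ via Theorem~\ref{thm:cof}, derive the factorization in Eqn.~\eqref{eqn:gamma-l-1}, read off the condition $2\sum_{i=l+1}^m \frac{1}{n_i-2}=2l-m$ together with the range $\frac{m}{2}<l\le\frac{3m}{4}$, and invoke Lemma~\ref{lem:sum} for the converse. You simply spell out the details (the sign bookkeeping and case $(1)$) that the paper compresses into ``proceeding similar to the proof of Theorem~\ref{thm:beta-l-1}.''
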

\begin{proof}
In view of Theorem~\ref{thm:cof} and  by arguing similar to Theorem~\ref{thm:beta-l-1},  it is enough to consider the  case $(2)$. Let $n_i=1$ for  $1\leq i\leq l$ and $n_i>2$ for  $l+1\leq i\leq m$. Using Eqn.~\eqref{eqn:gamma}, we get 
\begin{equation}\label{eqn:gamma-l-1}
\gamma_{n_1, n_2,\cdots,n_m} = (-1)^l \prod_{i=l+1}^m (n_i - 2) \left[2\sum_{i=l+1}^m \frac{1}{n_i-2}   + (m-2l)\right].
\end{equation}
Thus, by Theorem~\ref{thm:cof} we have  $\cof D(G)=0$ if and only if  $\ds 2\sum_{i=l+1}^m  \frac{1}{n_i-2}   = 2l-m$. In view of Eqn.~\eqref{eqn:gamma-l-1}, proceeding similar to the proof of Theorem~\ref{thm:beta-l-1} yields the result.
\end{proof}

In the next section, we  find the inverse of the distance matrix of complete $m$-partite graph $K_{n_1,n_2,\cdots,n_m}$ whenever it exists.

\section{Inverse of $D(K_{n_1,n_2,\cdots,n_m})$} \label{sec:inverse-Single-m-partitite}

 We first prove a result which gives a recurrence  type of relation involving $\beta$, $\gamma$  as defined in Eqns.~\eqref{eqn:beta}, \eqref{eqn:gamma}, respectively  and discuss some of its  consequences.
 
\begin{lem}\label{lem:index}
 For a given index set $\mathcal{I}=\{n_1,n_2,\cdots,n_m\}$,  let us denote  
$ \beta_\mathcal{I}=\beta_{n_1, n_2,\cdots,n_m},\  \gamma_\mathcal{I}=\gamma_{n_1, n_2,\cdots,n_m},\  \alpha_\mathcal{I}=\alpha_{n_1, n_2,\cdots,n_m}=\prod_{i=1}^{m}(n_i-2).$
\begin{enumerate}
\item [(i)] Let $\mathcal{I}=\{n_1,n_2,\cdots,n_m\}$,  if $\mathcal{J}  \subsetneq \mathcal{I}$ and $\mathcal{J}^{c}=\mathcal{I}\setminus \mathcal{J}$, then $\beta_\mathcal{I}=\gamma_\mathcal{I}+\alpha_\mathcal{I}$ and 
\begin{equation}\label{eqn:rec}
\begin{cases}
 \gamma_{\mathcal{I}} = \alpha_{\mathcal{J}^{c}} \gamma_{\mathcal{J}} + \alpha_{\mathcal{J}} \gamma_{\mathcal{J}^{c}}, \\
\beta_{\mathcal{I}} = \alpha_{\mathcal{J}^{c}} \beta_{\mathcal{J}} + \alpha_{\mathcal{J}} \gamma_{\mathcal{J}^{c}}. 
\end{cases}
\end{equation}

\item [(ii)]Let $\mathcal{I}=\{n_1,n_2,\cdots,n_m\}$, if $\beta_\mathcal{I}\neq 0$, then there exist a $\mathcal{J}  \subsetneq \mathcal{I}$ with $|\mathcal{J} |=m-1$ such that $\beta_\mathcal{J}\neq 0$. 
 \end{enumerate} 
\end{lem}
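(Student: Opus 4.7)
For part (i), the identity $\beta_\mathcal{I} = \gamma_\mathcal{I} + \alpha_\mathcal{I}$ is immediate from comparing the definitions in Eqns.~\eqref{eqn:beta} and~\eqref{eqn:gamma}. To establish the recurrence for $\gamma_\mathcal{I}$, my plan is to split the defining sum according to whether the running index lies in $\mathcal{J}$ or in $\mathcal{J}^{c}$. In the first piece the outer product factors cleanly as $\prod_{n_j\in\mathcal{I},\, j\neq i}(n_j-2) = \alpha_{\mathcal{J}^c}\prod_{n_j\in\mathcal{J},\, j\neq i}(n_j-2)$, which lets one pull $\alpha_{\mathcal{J}^c}$ out of the sum and recognize the remainder as $\gamma_\mathcal{J}$; the indices in $\mathcal{J}^c$ are handled symmetrically. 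The recurrence for $\beta_\mathcal{I}$ then follows from the relation $\alpha_\mathcal{I} = \alpha_\mathcal{J}\alpha_{\mathcal{J}^c}$ by regrouping $\alpha_{\mathcal{J}^c}\gamma_\mathcal{J} + \alpha_\mathcal{I} = \alpha_{\mathcal{J}^c}(\gamma_\mathcal{J} + \alpha_\mathcal{J}) = \alpha_{\mathcal{J}^c}\beta_\mathcal{J}$.

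For part (ii), I would argue by contrapositive: assume $\beta_{\widehat{n_i}}=0$ for every $1\leq i\leq m$ and deduce $\beta_\mathcal{I}=0$. The starting observation is that specializing the second recurrence of (i) to the singleton $\mathcal{J}^c=\{n_i\}$ (so $\alpha_{\mathcal{J}^c}=n_i-2$ and $\gamma_{\mathcal{J}^c}=n_i$) collapses it to
$$
\beta_\mathcal{I} \;=\; (n_i-2)\,\beta_{\widehat{n_i}} \;+\; n_i\,\alpha_{\widehat{n_i}} \;=\; n_i\,\alpha_{\widehat{n_i}} \qquad\text{for every }1\le i\le m.
$$
The argument then splits into two cases. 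If some $n_j=2$, the factor $(n_j-2)$ appears inside $\alpha_{\widehat{n_i}}$ for every $i\neq j$, so $\beta_\mathcal{I}=n_i\alpha_{\widehat{n_i}}=0$, as desired. Otherwise no $n_j$ equals $2$, each $\alpha_{\widehat{n_i}}$ is nonzero, and comparing the expressions for $\beta_\mathcal{I}$ at two distinct indices $i\neq k$ and canceling the common factor $\prod_{j\neq i,\,j\neq k}(n_j-2)$ reduces to $n_i(n_k-2)=n_k(n_i-2)$, i.e.\ $n_i=n_k$. So all $n_i$ equal a common value $n\neq 2$, and substituting into any $\beta_{\widehat{n_i}}=0$ yields $(n-2)^{m-2}(mn-2)=0$, forcing $mn=2$, which is impossible for $m\geq 3$ with $n\in\mathbb{N}$.

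The routine work is in part (i); the substantive point is the case analysis in (ii). The main obstacle I anticipate is the cancellation step $n_i(n_k-2)=n_k(n_i-2)$: it requires every $n_j-2$ for $j\notin\{i,k\}$ to be nonzero, which is precisely what the first case removes, so the order of the case split matters. One should also take care at the boundary $m=2$, where $\beta$ on a single-index set must be interpreted via the same defining formula; the argument above then specializes cleanly, and the conclusion of (ii) is most naturally applied in the regime $m\geq 3$ that governs the subsequent sections.
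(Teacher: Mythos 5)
Your proof is correct and follows essentially the same route as the paper: part (i) by splitting the defining sum over $\mathcal{J}$ and $\mathcal{J}^{c}$ and regrouping with $\alpha_\mathcal{I}=\alpha_\mathcal{J}\alpha_{\mathcal{J}^c}$, and part (ii) by assuming all $\beta_{\widehat{n_i}}$ vanish, reducing via the recurrence (with $\mathcal{J}^c=\{n_i\}$) to $\beta_{\mathcal I}=n_i\alpha_{\widehat{n_i}}$ for every $i$, and concluding that all $n_i$ coincide. The only divergence is the endgame of (ii): where the paper invokes Theorem~\ref{thm:beta-l-1} to force $n_i=2$ for all $i$, you compute $\beta_{n,\dots,n}=(n-2)^{m-2}(mn-2)$ directly, which is self-contained; and your explicit preliminary case split on ``some $n_j=2$'' patches the cancellation step $n_r\prod_{j\neq r}(n_j-2)=n_s\prod_{j\neq s}(n_j-2)\Rightarrow n_r=n_s$, which the paper carries out without noting that it requires $\prod_{j\neq r,s}(n_j-2)\neq 0$. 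You are also right to flag the boundary: like the paper's argument, yours genuinely needs $m\geq 3$ (at $m=2$ the conclusion of (ii) fails for $n_1=n_2=1$, where $\beta_{\{1,1\}}=-1$ yet $\beta_{\{1\}}=0$), which is harmless since the lemma is only applied in the induction step passing from $m-1\geq 2$ to $m$.
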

\begin{proof}
Observe, $\beta_\mathcal{I}=\gamma_\mathcal{I}+\alpha_\mathcal{I}$ follows from the definition. Next,  $$ \ds \gamma_{\mathcal{I}}  = \sum_{i=1}^m  n_i\prod_{j \neq i}(n_j - 2)  = \left( \sum_{\mathcal{J}} +\sum_{\mathcal{J}^{c}} \right) n_i \prod_{j \neq i}(n_j - 2)
 =\alpha_{\mathcal{J}^{c}} \gamma_{\mathcal{J}} + \alpha_{\mathcal{J}} \gamma_{\mathcal{J}^{c}}.$$
Further, using $\beta_\mathcal{I}=\gamma_\mathcal{I}+\alpha_\mathcal{I}$ and $\alpha_{\mathcal{I}}=\alpha_{\mathcal{J}}\alpha_{\mathcal{J}^c}$, we have 
 $$\beta_\mathcal{I}=\alpha_{\mathcal{J}^{c}} \gamma_{\mathcal{J}} + \alpha_{\mathcal{J}} \gamma_{\mathcal{J}^{c}}+ \alpha_{\mathcal{J}}\alpha_{\mathcal{J}^c}= \alpha_{\mathcal{J}^c}(\gamma_\mathcal{J}+\alpha_\mathcal{J})+ \alpha_{\mathcal{J}} \gamma_{\mathcal{J}^{c}}=\alpha_{\mathcal{J}^c}\beta_\mathcal{J}+ \alpha_{\mathcal{J}} \gamma_{\mathcal{J}^{c}}.$$ 
To prove part $(ii)$, suppose on the contrary  we assume no such $\mathcal{J}$ exists.  Then, by part $(i)$,  $\ds \beta_\mathcal{I}= \alpha_{\mathcal{J}} \gamma_{\mathcal{J}^{c}}=n_i\prod_{j=1\atop j\neq i}^{m} (n_j-2)  \mbox{ for all }i=1,2,\ldots,m.$ Therefore, for any $1\leq r,s\leq m$, we have $$\ds n_r\prod_{j=1\atop j\neq r}^{m} (n_j-2)= n_s\prod_{j=1\atop j\neq s}^{m} (n_j-2),$$ which implies  $n_r=n_s$ and hence $n_1=n_2=\cdots=n_m$. Since we assume  $\beta_{\mathcal{J}}=0$, whenever $|\mathcal{J}|=m-1$, so  by Theorem~\ref{thm:beta-l-1} we have $n_i=2$ for all $1\leq i\leq m$. This contradicts the hypothesis $\beta_\mathcal{I}\neq 0$ and hence the result follows.
\end{proof}

Following the convention  used in Eqns.~\eqref{eqn:beta} and~\eqref{eqn:gamma} we introduce a few notations. Let $\mathcal{I}=\{n_1,n_2,\cdots,n_m\}$ be an index set and   $\mathcal{J} = \mathcal{I}\setminus \{n_i,n_j\}$ for $1 \leq i,j \leq m$.  We denote
$$\alpha_{\widehat{n_i,n_j}} = \alpha_{\mathcal{J}}, \  \beta_{\widehat{n_i,n_j}} = \beta_{\mathcal{J}} \mbox{ and } \gamma_{\widehat{n_i,n_j}} = \gamma_{\mathcal{J}},$$ 
where $\alpha_{\mathcal{J}}, \ \beta_{\mathcal{J}}$ and $\gamma_{\mathcal{J}}$ are as defined in Lemma~\ref{lem:index}. Now we prove a few identities which will be used in computing the inverse of $D(K_{n_1,n_2,\cdots,n_m})$.

\begin{lem}\label{lem:identity}
For $2\leq i\leq m$, we have the following identities:

\begin{itemize}
\item[ (a)] 
$n_1 (\gamma_{\widehat{n_1}} + 2\alpha_{\widehat{n_1}}) - \beta_{n_1, n_2,\cdots,n_m}=2 \beta_{\widehat{n_1}}.$
\item[(b)] $n_i(\gamma_{\widehat{n_1,n_i}} + 2\alpha_{\widehat{n_1,n_i}}) - \beta_{\widehat{n_1}} - 2\gamma_{\widehat{n_1,n_i}}= 2\alpha_{\widehat{n_1,n_i}}.$

\item[(c)]$(\gamma_{\widehat{n_1,n_i}} +2 \alpha_{\widehat{n_1,n_i}})\beta_{n_1, n_2,\cdots,n_m} + 2n_1 \alpha_{\widehat{n_1,n_i}}^2= [2 \beta_{\widehat{n_i}}- \gamma_{\widehat{n_i}}]\beta_{\widehat{n_1}}.$
\end{itemize}
\end{lem}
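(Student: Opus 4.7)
The plan is to derive all three identities as consequences of the recurrence in Lemma~\ref{lem:index}(i), applied to a few natural decompositions of the index set $\mathcal{I}=\{n_1,n_2,\cdots,n_m\}$, together with the basic identity $\beta=\gamma+\alpha$.

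For (a), I would apply Eqn.~\eqref{eqn:rec} with the singleton $\{n_1\}$ as $\mathcal{J}^{c}$. Noting that $\alpha_{\{n_1\}}=n_1-2$ and $\gamma_{\{n_1\}}=n_1$, this yields $\beta_{\mathcal{I}}=(n_1-2)\beta_{\widehat{n_1}}+n_1\alpha_{\widehat{n_1}}$. Substituting $\beta_{\widehat{n_1}}=\gamma_{\widehat{n_1}}+\alpha_{\widehat{n_1}}$ and regrouping to isolate the coefficient of $n_1$ rearranges this directly into (a). For (b), the identical argument applied one level down does the job: view $\beta_{\widehat{n_1}}$ as the $\beta$ of the reduced index set $\{n_2,\ldots,n_m\}$ and split off the singleton $\{n_i\}$ to obtain $\beta_{\widehat{n_1}}=(n_i-2)\beta_{\widehat{n_1,n_i}}+n_i\alpha_{\widehat{n_1,n_i}}$. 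Substituting $\beta_{\widehat{n_1,n_i}}=\gamma_{\widehat{n_1,n_i}}+\alpha_{\widehat{n_1,n_i}}$ and rearranging gives (b).

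For (c), the strategy is to reduce both sides to polynomial expressions in the base quantities $G:=\gamma_{\widehat{n_1,n_i}}$, $A:=\alpha_{\widehat{n_1,n_i}}$, $n_1$ and $n_i$, and to check equality coefficient-wise. The recurrence of Eqn.~\eqref{eqn:rec} (with $\mathcal{J}=\{n_i\}$ applied to the index set $\{n_2,\ldots,n_m\}$) yields
$$\alpha_{\widehat{n_1}}=(n_i-2)A,\qquad \gamma_{\widehat{n_1}}=(n_i-2)G+n_i A,\qquad \beta_{\widehat{n_1}}=(n_i-2)G+(2n_i-2)A,$$
together with the symmetric formulas for the quantities decorated by $\widehat{n_i}$. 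Feeding these into (a) also produces
$$\beta_{\mathcal{I}}=(n_1-2)(n_i-2)\,G+\bigl[3n_1n_i-4n_1-4n_i+4\bigr]\,A.$$
Finally, the simplification $2\beta_{\widehat{n_i}}-\gamma_{\widehat{n_i}}=\beta_{\widehat{n_i}}+\alpha_{\widehat{n_i}}$ (again from $\beta=\gamma+\alpha$) turns the right-hand side of (c) into a product of two linear forms in $G$ and $A$; the left-hand side is manifestly a quadratic form in $G, A$ as well. A direct comparison of the $G^2$, $GA$, and $A^2$ coefficients on the two sides closes the proof.

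The main obstacle is the algebraic bookkeeping for part (c). Once every quantity is expressed in the base variables $G, A$, the identity is purely a polynomial check, but one has to be careful to invoke the symmetric ($n_1\leftrightarrow n_i$) versions of (a) and (b) when computing $\beta_{\widehat{n_i}}$ and $\gamma_{\widehat{n_i}}$, and to track the extra term $2n_1 A^{2}$: it is precisely this correction that balances the $A^{2}$ coefficient and is the reason the identity cannot be read off from (a) and (b) alone without the symmetric analogues.
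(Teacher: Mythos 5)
Your proposal is correct, and I verified the coefficient check that part (c) reduces to: writing $G=\gamma_{\widehat{n_1,n_i}}$, $A=\alpha_{\widehat{n_1,n_i}}$, both sides of (c) equal $(n_1-2)(n_i-2)G^2+(5n_1n_i-8n_1-8n_i+12)GA+(6n_1n_i-6n_1-8n_i+8)A^2$. For parts (a) and (b) you follow essentially the paper's route: both rest on the recurrence of Lemma~\ref{lem:index}(i) split off at a singleton (you take $\mathcal{J}^c=\{n_1\}$ where the paper takes $\mathcal{J}=\{n_1\}$, an immaterial difference), and both obtain (b) by applying (a) to the reduced index set $\{n_2,\ldots,n_m\}$. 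Part (c) is where you genuinely diverge. The paper proves (c) by a long telescoping chain: starting from the left-hand side it repeatedly invokes Eqn.~\eqref{eqn:rec} to absorb terms, passing through intermediate expressions such as $(\gamma_{\widehat{n_1,n_i}}+2\alpha_{\widehat{n_1,n_i}})\alpha_{n_1}\beta_{\widehat{n_1}}+n_1\alpha_{\widehat{n_1,n_i}}\beta_{\widehat{n_1}}$ before factoring out $\beta_{\widehat{n_1}}$. Your normal-form strategy instead expresses every decorated quantity ($\beta_{\widehat{n_1}}$, $\gamma_{\widehat{n_i}}$, $\beta_{\mathcal{I}}$, etc.) as a linear form in $G$ and $A$ with coefficients in $n_1,n_i$, exploiting the $n_1\leftrightarrow n_i$ symmetry, and reduces (c) to matching three coefficients of a quadratic form. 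This buys a mechanical, easily auditable verification at the cost of introducing the base variables; the paper's chain avoids any normal form but demands finding the right sequence of regroupings and is harder to check line by line. Your observation that $2\beta_{\widehat{n_i}}-\gamma_{\widehat{n_i}}=\gamma_{\widehat{n_i}}+2\alpha_{\widehat{n_i}}$ is also exactly the simplification the paper uses in its final line, just deployed at the start rather than the end.
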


\begin{proof}
Using the part $(i)$ of Lemma~\ref{lem:identity}, we have
\begin{align*}
n_1 (\gamma_{\widehat{n_1}} + 2\alpha_{\widehat{n_1}}) - \beta_{n_1, n_2,\cdots,n_m} & = n_1 (\gamma_{\widehat{n_1}} + 2\alpha_{\widehat{n_1}}) - \left(\alpha_{\widehat{n_1}} \beta_{n_1} + \alpha_{n_1}\gamma_{\widehat{n_1}} \right)\\
			   & = n_1 (\gamma_{\widehat{n_1}} + 2\alpha_{\widehat{n_1}}) - \left(2(n_1-1)\alpha_{\widehat{n_1}} + (n_1-2)\gamma_{\widehat{n_1}} \right)\\ & = 2 (\gamma_{\widehat{n_1}} + \alpha_{\widehat{n_1}}) = 2 \beta_{\widehat{n_1}}.
\end{align*}
This proves part $(a)$. Next using the index set $\mathcal{I'} = \{n_2, n_3, \cdots,n_m\}$ and $\mathcal{J'} = \mathcal{I'} \setminus \{n_i\}$, whenever $2 \leq i \leq m$, by part $(a)$ we get $n_i(\gamma_{\widehat{n_1,n_i}} + 2\alpha_{\widehat{n_1,n_i}}) - \beta_{\widehat{n_1}} = 2\beta_{\widehat{n_1,n_i}}.$ Since $\beta_{\widehat{n_1,n_i}} = \gamma_{\widehat{n_1,n_i}}+\alpha_{\widehat{n_1,n_i}}$, part $(b)$ follows. Finally with the repeated application of Eqn.~\eqref{eqn:rec} we prove part $(c)$ and the proof is as follows:
\begin{align*}
  &(\gamma_{\widehat{n_1,n_i}} +2 \alpha_{\widehat{n_1,n_i}})\beta_{n_1, n_2,\cdots,n_m} + 2n_1 \alpha_{\widehat{n_1,n_i}}^2\\
 = &(\gamma_{\widehat{n_1,n_i}} +2 \alpha_{\widehat{n_1,n_i}})(\alpha_{n_1}\beta_{\widehat{n_1}} + \alpha_{\widehat{n_1}} \gamma_{n_1}) + 2n_1 \alpha_{\widehat{n_1,n_i}}^2 \\
 = &(\gamma_{\widehat{n_1,n_i}} +2 \alpha_{\widehat{n_1,n_i}})\alpha_{n_1}\beta_{\widehat{n_1}} + n_1 \alpha_{\widehat{n_1}} (\gamma_{\widehat{n_1,n_i}} +2 \alpha_{\widehat{n_1,n_i}}) + 2n_1 \alpha_{\widehat{n_1,n_i}}^2\\
 = &(\gamma_{\widehat{n_1,n_i}} +2 \alpha_{\widehat{n_1,n_i}})\alpha_{n_1}\beta_{\widehat{n_1}} + n_1 \alpha_{\widehat{n_1}} \gamma_{\widehat{n_1,n_i}} +2n_1(n_i-2) \alpha_{\widehat{n_1,n_i}}^2 + 2n_1 \alpha_{\widehat{n_1,n_i}}^2\\
 = &(\gamma_{\widehat{n_1,n_i}} +2 \alpha_{\widehat{n_1,n_i}})\alpha_{n_1}\beta_{\widehat{n_1}} + n_1 \alpha_{\widehat{n_1}} \gamma_{\widehat{n_1,n_i}} +2n_1(n_i-1) \alpha_{\widehat{n_1,n_i}}^2\\
 = &(\gamma_{\widehat{n_1,n_i}} +2 \alpha_{\widehat{n_1,n_i}})\alpha_{n_1}\beta_{\widehat{n_1}} + n_1(n_i-2) \alpha_{\widehat{n_1,n_i}} \gamma_{\widehat{n_1,n_i}} +2n_1(n_i-1) \alpha_{\widehat{n_1,n_i}}^2\\
 = &(\gamma_{\widehat{n_1,n_i}} +2 \alpha_{\widehat{n_1,n_i}})\alpha_{n_1}\beta_{\widehat{n_1}} + n_1\alpha_{\widehat{n_1,n_i}} \left[ (n_i-2)\gamma_{\widehat{n_1,n_i}} +2(n_i-1) \alpha_{\widehat{n_1,n_i}} \right]\\
 = &(\gamma_{\widehat{n_1,n_i}} +2 \alpha_{\widehat{n_1,n_i}})\alpha_{n_1}\beta_{\widehat{n_1}} + n_1\alpha_{\widehat{n_1,n_i}} \left[ ((n_i-2)\gamma_{\widehat{n_1,n_i}} +n_i \alpha_{\widehat{n_1,n_i}}) +\alpha_{\widehat{n_1}} \right]\\
  = &(\gamma_{\widehat{n_1,n_i}} +2 \alpha_{\widehat{n_1,n_i}})\alpha_{n_1}\beta_{\widehat{n_1}} + n_1\alpha_{\widehat{n_1,n_i}} \left[ \gamma_{\widehat{n_1}}   +\alpha_{\widehat{n_1}} \right]\\
 = &(\gamma_{\widehat{n_1,n_i}} +2 \alpha_{\widehat{n_1,n_i}})\alpha_{n_1}\beta_{\widehat{n_1}} + n_1\alpha_{\widehat{n_1,n_i}}\beta_{\widehat{n_1}}\\
 = &[(\gamma_{\widehat{n_1,n_i}} +2 \alpha_{\widehat{n_1,n_i}})(n_1-2) + n_1\alpha_{\widehat{n_1,n_i}}]\beta_{\widehat{n_1}}\\
 = &[(n_1-2)\gamma_{\widehat{n_1,n_i}} + n_1\alpha_{\widehat{n_1,n_i}} +2 \alpha_{\widehat{n_i}} ]\beta_{\widehat{n_1}}\\
 = & [\gamma_{\widehat{n_i}} +2 \alpha_{\widehat{n_i}}]\beta_{\widehat{n_1}} =[2 \beta_{\widehat{n_i}}- \gamma_{\widehat{n_i}}]\beta_{\widehat{n_1}}. 
\end{align*}

\end{proof}

\begin{theorem}
Let  $D(K_{n_1,n_2,\cdots,n_m})$ be the distance matrix of complete $m$-partite graph $K_{n_1,n_2,\cdots,n_m}$. If $\det D(K_{n_1,n_2,\cdots,n_m}) \neq 0$, then  the inverse in $m\times m$ block form is given by   $D(K_{n_1,n_2,\cdots,n_m})^{-1} = [\widetilde{D}_{ij}]$, where 
\begin{equation*}\label{eqn:D_inv}
\widetilde{D}_{ij}=
\begin{cases}
\left(\dfrac{2 \beta_{\widehat{n_i}} - \gamma_{\widehat{n_i}}}{2 \beta_{n_1, n_2,\cdots,n_m}} \right)J_{n_i} - \dfrac{1}{2} I_{n_i} & \text{if} \ i = j;\\
\\
-\dfrac{\displaystyle{\prod_{l \neq i,j}(n_l - 2)}}{\beta_{n_1, n_2,\cdots,n_m}} J_{n_i \times n_j} & \text{if} \ i \neq j.\\
\end{cases}
\end{equation*}
\end{theorem}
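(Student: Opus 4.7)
The plan is to verify the formula directly by showing that $D(K_{n_1,n_2,\cdots,n_m})\,\widetilde{D} = I_{|V|}$, where $\widetilde{D}=[\widetilde{D}_{ij}]$ is the matrix on the right-hand side. Since both matrices share the block partition induced by $V=V_1\sqcup\cdots\sqcup V_m$, I would compute the $(i,j)$-block of the product separately in the cases $i=j$ and $i\neq j$, repeatedly using the identities $J_{n_i}^2=n_iJ_{n_i}$ and $J_{n_i\times n_k}J_{n_k\times n_j}=n_kJ_{n_i\times n_j}$. For brevity, write $\beta=\beta_{n_1,\ldots,n_m}$.

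For the diagonal block, expanding
$$(D\widetilde{D})_{ii}=2(J_{n_i}-I_{n_i})\widetilde{D}_{ii}+\sum_{k\neq i}J_{n_i\times n_k}\widetilde{D}_{ki}$$
and collapsing every summand yields an expression of the form $I_{n_i}+\Lambda_i J_{n_i}$. Recognising $\sum_{k\neq i}n_k\prod_{l\neq i,k}(n_l-2)=\gamma_{\widehat{n_i}}$ and simplifying, $\Lambda_i=0$ is equivalent to $(n_i-1)(2\beta_{\widehat{n_i}}-\gamma_{\widehat{n_i}})=\beta+\gamma_{\widehat{n_i}}$, which follows from Lemma~\ref{lem:identity}(a) after observing that $\gamma_{\widehat{n_i}}+2\alpha_{\widehat{n_i}}=2\beta_{\widehat{n_i}}-\gamma_{\widehat{n_i}}$; note that Lemma~\ref{lem:identity}(a) is symmetric in the index because all the scalars $\alpha, \beta, \gamma$ are symmetric in $n_1,\ldots,n_m$, so it applies with any $n_i$ in place of $n_1$.

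For the off-diagonal block $(i,j)$ with $i\neq j$, decompose
$$(D\widetilde{D})_{ij}=2(J_{n_i}-I_{n_i})\widetilde{D}_{ij}+J_{n_i\times n_j}\widetilde{D}_{jj}+\sum_{k\neq i,j}J_{n_i\times n_k}\widetilde{D}_{kj},$$
so each summand is a scalar multiple of $J_{n_i\times n_j}$. The delicate step is the last sum: factoring $(n_i-2)$ out of $\prod_{l\neq k,j}(n_l-2)$ for each $k\neq i,j$ and invoking the recursion in Lemma~\ref{lem:index}(i) (applied to $\mathcal{I}\setminus\{n_j\}$ split at $n_i$) gives
$$\sum_{k\neq i,j}n_k\prod_{l\neq k,j}(n_l-2)=\gamma_{\widehat{n_j}}-n_i\alpha_{\widehat{n_i,n_j}}.$$
Consolidating the three contributions and applying Lemma~\ref{lem:identity}(a) relative to $n_j$ reduces the net scalar coefficient of $J_{n_i\times n_j}$ to a multiple of $\beta_{\widehat{n_j}}-\gamma_{\widehat{n_j}}-\alpha_{\widehat{n_j}}$, which vanishes by the defining relation $\beta_{\mathcal{I}}=\gamma_{\mathcal{I}}+\alpha_{\mathcal{I}}$ from Lemma~\ref{lem:index}(i).

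The chief obstacle is bookkeeping: with three families of symbols carrying nested one- and two-index ``hat'' conventions, a sloppy simplification can easily scramble the cancellations. The crux is reducing the double-hat quantities $\alpha_{\widehat{n_i,n_j}}$ and $\gamma_{\widehat{n_i,n_j}}$ back to single-hat symbols via Lemma~\ref{lem:index}(i); once this is done, Lemma~\ref{lem:identity}(a) supplies the decisive cancellation in both the diagonal and off-diagonal cases. A Schur-complement alternative, partitioning $D$ into the first diagonal block and the inductively treated $D(K_{n_2,\ldots,n_m})$ and then using Proposition~\ref{prop:schur} together with Lemma~\ref{lem:aI+bJ}, arrives at the same formula but requires an extra invertibility check on the southeast block at each inductive step, so I would favour the direct verification for transparency.
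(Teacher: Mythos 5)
Your proposal is correct, and it takes a genuinely different route from the paper. The paper proves the formula by induction on $m$: it partitions $D$ with $B_{22}=D(K_{n_2,\ldots,n_m})$, invokes part (ii) of Lemma~\ref{lem:index} to reorder the parts so that $B_{22}$ is invertible, applies the induction hypothesis to $B_{22}^{-1}$, and assembles the inverse via Proposition~\ref{prop:schur}; this requires all three identities of Lemma~\ref{lem:identity}, including the lengthy telescoping computation for part (c), plus the base case $m=2$ imported from the bi-block paper. Your direct verification that $D\widetilde{D}=I$ sidesteps all of that: I checked both block computations and they close up exactly as you describe. The diagonal coefficient vanishes precisely when $(n_i-1)(2\beta_{\widehat{n_i}}-\gamma_{\widehat{n_i}})=\beta+\gamma_{\widehat{n_i}}$, which is a rearrangement of identity (a) once you substitute $\gamma_{\widehat{n_i}}+2\alpha_{\widehat{n_i}}=2\beta_{\widehat{n_i}}-\gamma_{\widehat{n_i}}$, and your appeal to symmetry to apply (a) at an arbitrary index is legitimate since $\alpha,\beta,\gamma$ are symmetric functions. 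For the off-diagonal block, after using $\sum_{k\neq i,j}n_k\prod_{l\neq k,j}(n_l-2)=\gamma_{\widehat{n_j}}-n_i\alpha_{\widehat{n_i,n_j}}$ and identity (a) at index $j$, the coefficient collapses to $\frac{1}{\beta}\left[\alpha_{\widehat{n_j}}-(n_i-2)\alpha_{\widehat{n_i,n_j}}\right]=0$, using $\beta_{\widehat{n_j}}-\gamma_{\widehat{n_j}}=\alpha_{\widehat{n_j}}$ and the trivial factorisation $\alpha_{\widehat{n_j}}=(n_i-2)\alpha_{\widehat{n_i,n_j}}$ (your sum identity is really just the definition of $\gamma_{\widehat{n_j}}$ with the $k=i$ term removed, so citing the recursion of Lemma~\ref{lem:index}(i) is heavier machinery than needed). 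What your approach buys is the elimination of the induction, of the invertibility check on $B_{22}$ (and hence of Lemma~\ref{lem:index}(ii)), and of identities (b) and (c) entirely; since $D$ and $\widetilde{D}$ are both symmetric, $D\widetilde{D}=I$ already gives $\widetilde{D}D=I$, so nothing is lost. The hypothesis $\det D\neq 0$ enters only to guarantee $\beta_{n_1,\ldots,n_m}\neq 0$ so that the formula is well defined.
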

\begin{proof}
We prove this result using induction on $m$ and Proposition~\ref{prop:schur}. By~\cite[Lemma 2]{Hou3}, the inverse of the distance matrix for bipartite graph is given by
{\small \begin{equation*}
D(K_{n_1,n_2})^{-1}=\left[
\begin{array}{c|c}
 \dfrac{3n_2-4}{2(3n_1n_2-4(n_2+n_2-1))}J_{n_1}-\dfrac{1}{2}I_{n_1} & -\dfrac{1}{3n_1n_2-4(n_1+n_2-1)}J_{n_1\times n_2} \\
  \midrule
-\dfrac{1}{3n_1n_2-4(n_1+n_2-1)}J_{n_2\times n_1} &  \dfrac{3n_1-4}{2(3n_1n_2-4(n_1+n_2-1))}J_{n_2}-\dfrac{1}{2}I_{n_2}
\end{array}
\right],
\end{equation*}}
and it is easy to check the result is true for $m=2$.  We assume the result is true for complete $(m-1)$-partite graphs. The distance matrix of complete $m$-partite graph can be expressed in the following block form: \\
$$D(K_{n_1,n_2,\cdots,n_m}) = \left[
\begin{array}{c|c}
B_{11}& B_{12} \\
\hline
B_{21} & B_{22}
\end{array}
\right],$$
where 

$B_{11} = 2(J_{n_1} - I_{n_1})$, \ $B_{12} = \left[
\begin{array}{c|c|c|c}
J_{n_1 \times n_2}& J_{n_1 \times n_3} & \cdots &J_{n_1 \times n_m} \\
\end{array}
\right] = B_{21}^t$ and $B_{22} = D(K_{n_2,n_3,\cdots,n_m})$. 

By  part $(ii)$ of Lemma~\ref{lem:index}, without loss of generality we assume $B_{22} = D(K_{n_2,n_3,\cdots,n_m})$ is invertible.  Using the induction hypothesis for $B_{22}$, the inverse $B_{22}^{-1}$ is an $(m-1)\times (m-1)$ block matrix. Since the vertex set of $K_{n_2,n_3,\cdots,n_m}$ corresponds to the partitions $V_2,V_3,\ldots,V_m$, so without loss of generality, we index the  block matrix   $B_{22}^{-1}$ from $2,3,\ldots,m$,  instead of $1,2,\ldots,m-1$.
By induction hypothesis, let $B_{22}^{-1}=[\widetilde{(B_{22})}_{ij}]$, for $2\leq i,j\leq m$, where 
\begin{equation*}
\begin{split}
\widetilde{(B_{22})}_{ij} & = \begin{cases}\vspace*{.4cm}
  \dfrac{2\beta_{\widehat{n_1,n_i} } - \gamma_{\widehat{n_1,n_i} }}{2 \beta_{\widehat{n_1}}} J_{n_i} - \dfrac{1}{2} I_{n_i} & \text{if} \ i = j;\\  
  -\dfrac{\prod_{l \neq 1,i,j}(n_l - 2)}{\beta_{\widehat{n_1}}} J_{n_i \times n_j} & \text{if} \ i \neq j,\\
  \end{cases}
\end{split}
\end{equation*}
with $\beta_{\widehat{n_1,n_i} }=\beta_{n_2,\cdots,n_{i-1},n_{i+1},\cdots,n_m}$ and $\gamma_{\widehat{n_1,n_i} }=\gamma_{n_2,\cdots, n_{i-1},n_{i+1},\cdots,n_m}$. Since $B_{22}^{-1}B_{21}$ is an $(m-1)\times 1$ block matrix, so using part $(b)$ of Lemma~\ref{lem:identity} and $\alpha_{\widehat{n_1,n_i} }=\alpha_{n_2,\cdots,n_{i-1},n_{i+1},\cdots,n_m}$; for $2\leq i\leq m$, we have 
\begin{align*}
(B_{22}^{-1}B_{21})_i
 & = \left(\dfrac{2 \beta_{\widehat{n_1,n_i}} - \gamma_{\widehat{n_1,n_i}}}{2 \beta_{\widehat{n_1}}} J_{n_i} - \dfrac{1}{2} I_{n_i}  \right)J_{n_i \times n_1} -\sum_{j=2,j \neq i}^m \dfrac{\prod_{l \neq 1,i,j}(n_l - 2)}{\beta_{\widehat{n_1}}} J_{n_i \times n_j} J_{n_j \times n_1} \\
  & = \dfrac{1}{2 \beta_{\widehat{n_1}}} \left[n_i(2 \beta_{\widehat{n_1,n_i}} - \gamma_{\widehat{n_1,n_i}}) - \beta_{\widehat{n_1}} -2\sum_{j=2,j \neq i}^m  n_j \prod_{l \neq 1,i,j}(n_l - 2)\right]J_{n_i \times n_1}\\
  & = \dfrac{1}{2 \beta_{\widehat{n_1}}} \left[ n_i(\gamma_{\widehat{n_1,n_i}} + 2\alpha_{\widehat{n_1,n_i}}) - \beta_{\widehat{n_1}} - 2\gamma_{\widehat{n_1,n_i}} \right]J_{n_i \times n_1}\\
  & = \frac{\alpha_{\widehat{n_1,n_i}}}{\beta_{\widehat{n_1}}} J_{n_i \times n_1}= \frac{\prod_{l \neq 1,i}(n_l - 2)}{\beta_{\widehat{n_1}}} J_{n_i \times n_1}.
\end{align*}
Thus
\begin{equation}\label{eqn:B22B12}
B_{22}^{-1}B_{21} =  \left[
\begin{array}{c|c|c|c}
\frac{\prod_{l \neq 1,2}(n_l - 2)}{\beta_{\widehat{n_1}}}J_{n_2 \times n_1}&\frac{\prod_{l \neq 1,3}(n_l - 2)}{\beta_{\widehat{n_1}}} J_{n_3 \times n_1} & \cdots &\frac{\prod_{l \neq 1,m}(n_l - 2)}{\beta_{\widehat{n_1}}} J_{n_m \times n_1} \\
\end{array}
\right]^t,
\end{equation}
and hence $\ds B_{12}B_{22}^{-1}B_{21} = \dfrac{\sum_{j=2}^m n_j \alpha_{\widehat{n_1,n_j}}}{\beta_{\widehat{n_1}}} J_{n_1} = \dfrac{\gamma_{\widehat{n_1}}}{\beta_{\widehat{n_1}}}J_{n_1}$.
Then, the Schur complement of $B_{22}$ in $D(K_{n_1,n_2,\cdots,n_m})$ is 
$$D(K_{n_1,n_2,\cdots,n_m}) /B_{22}=B_{11} -B_{12}B_{22}^{-1}B_{21}= \left(\dfrac{2 \beta_{\widehat{n_1}} - \gamma_{\widehat{n_1}}}{\beta_{\widehat{n_1}}} \right)J_{n_1} - 2I_{n_1},$$
and using Lemma~\ref{lem:aI+bJ}, we get
$$(D(K_{n_1,n_2,\cdots,n_m}) /B_{22})^{-1} = \left(\dfrac{2 \beta_{\widehat{n_1}} - \gamma_{\widehat{n_1}}}{2 \beta_{n_1, n_2,\cdots,n_m}} \right)J_{n_1} - \dfrac{1}{2} I_{n_1}.$$
Let $D(K_{n_1,n_2,\cdots,n_m})^{-1} =\left[
\begin{array}{c|c}
C_{11} & C_{12}  \\
\hline
C_{21} & C_{22}
\end{array}
\right]$ be the conformal partition as in Proposition~\ref{prop:schur},  with respect to the Schur complement of $B_{22}$ in $D(K_{n_1,n_2,\cdots,n_m})$. Using part $(a)$ of Lemma~\ref{lem:identity}, for  $2\leq i\leq m$, we have 
\begin{align*}
\left[ \left(\dfrac{2 \beta_{\widehat{n_1}} - \gamma_{\widehat{n_1}}}{2 \beta_{n_1, n_2,\cdots,n_m}} \right)J_{n_1} - \dfrac{1}{2} I_{n_1} \right] \times \frac{\alpha_{\widehat{n_1,n_i}}}{\beta_{\widehat{n_1}}}J_{n_1 \times n_i}
& = \left[ n_1 \left(\dfrac{2 \beta_{\widehat{n_1}} - \gamma_{\widehat{n_1}}}{2 \beta_{n_1, n_2,\cdots,n_m}} \right) - \dfrac{1}{2} \right] \times \frac{\alpha_{\widehat{n_1,n_i}}}{\beta_{\widehat{n_1}}}J_{n_1 \times n_i}\\
& = \left[\dfrac{ n_1 \left(2 \beta_{\widehat{n_1}} - \gamma_{\widehat{n_1}} \right) - \beta_{n_1, n_2,\cdots,n_m}}{2 \beta_{n_1, n_2,\cdots,n_m}}  \right] \times \frac{\alpha_{\widehat{n_1,n_i}}}{\beta_{\widehat{n_1}}}J_{n_1 \times n_i}\\
&= \frac{\alpha_{\widehat{n_1,n_i}}}{\beta_{n_1, n_2,\cdots,n_m}}J_{n_1 \times n_i}= \frac{\prod_{l \neq 1,i}(n_l - 2)}{\beta_{n_1, n_2,\cdots,n_m}}J_{n_1 \times n_i}.
\end{align*}
Since $B_{12}^t=B_{21}$ and $B_{22}^{-1}$ is a symmetric matrix, we have $B_{12}B_{22}^{-1}=(B_{22}^{-1} B_{21})^t$, so  the above calculations yields:
\begin{equation}\label{eqn:C12}
\ds\begin{split}
C_{12} & = -(D(K_{n_1,n_2,\cdots,n_m}) /B_{22})^{-1}B_{12}B_{22}^{-1}\\
	   & = -\left[
\begin{array}{c|c|c|c}
\frac{\prod_{l \neq 1,2}(n_l - 2)}{\beta_{n_1, n_2,\cdots,n_m}}J_{n_1 \times n_2}&\frac{\prod_{l \neq 1,3}(n_l - 2)}{\beta_{n_1, n_2,\cdots,n_m}} J_{n_1 \times n_3} & \cdots &\frac{\prod_{l \neq 1,m}(n_l - 2)}{\beta_{n_1, n_2,\cdots,n_m}} J_{n_1 \times n_m} \\
\end{array}
\right].
\end{split}
\end{equation}
By symmetry it can be seen that $C_{21} = C_{12}^t$. Next, let us denote $$X=B_{22}^{-1}B_{21}(D(K_{n_1,n_2,\cdots,n_m}) /B_{22})^{-1}B_{12}B_{22}^{-1}.$$ 
By Eqns.~\eqref{eqn:B22B12} and \eqref{eqn:C12}, it is easy to notice that  $X=B_{22}^{-1}B_{21}(-C_{12})$ is an $(m-1)\times (m-1)$ block matrix and similar to $B_{22}^{-1}$ we index the block form $X=[X_{ij}]$;  $2\leq i,j\leq m$, where
\begin{equation*}
X_{ij} = \begin{cases}\vspace*{.4cm}
  \dfrac{n_1 \prod_{l \neq 1,i}(n_l - 2)^2}{\beta_{\widehat{n_1}}  \beta_{n_1, n_2,\cdots,n_m}} J_{n_i} & \text{if} \ i = j;\\
  \dfrac{n_1 (n_i -2)(n_j-2) \prod_{l \neq 1,i,j}(n_l - 2)^2}{\beta_{\widehat{n_1}}  \beta_{n_1, n_2,\cdots,n_m}} J_{n_i \times n_j} & \text{if} \ i \neq j.\\
  \end{cases}
\end{equation*} 
Let $C_{22}= [(C_{22})_{ij}]$  be the block form of $C_{22}$. Since $C_{22}=B_{22}^{-1}+X$, so for $2\leq i,j\leq m$, we have
\begin{align*}
(C_{22})_{ij}= \widetilde{(B_{22})}_{ij}+ X_{ij}&=  \begin{cases}\vspace*{.4cm}
  \left(\dfrac{2\beta_{\widehat{n_1,n_i}} - \gamma_{\widehat{n_1,n_i}}}{2 \beta_{\widehat{n_1}}} + \dfrac{n_1 \prod_{l \neq 1,i}(n_l - 2)^2}{\beta_{\widehat{n_1}}  \beta_{n_1, n_2,\cdots,n_m}} \right)J_{n_i} - \dfrac{1}{2} I_{n_i} & \text{if} \ i = j;\\
  \left(\dfrac{\prod_{l \neq 1,i,j}(n_l - 2)}{\beta_{\widehat{n_1}}} \right) \left(\dfrac{n_1 \prod_{l \neq 1}(n_l - 2)}{\beta_{n_1, n_2,\cdots,n_m}} - 1 \right)J_{n_i \times n_j} & \text{if} \ i \neq j.\\
  \end{cases}
\end{align*}
Now using the Lemmas~\ref{lem:index} and~\ref{lem:identity},  we compute the following:
\begin{align*}
\dfrac{2\beta_{\widehat{n_1,n_i}} - \gamma_{\widehat{n_1,n_i}}}{2 \beta_{\widehat{n_1}}} + \dfrac{n_1 \prod_{l \neq 1,i}(n_l - 2)^2}{\beta_{\widehat{n_1}}  \beta_{n_1, n_2,\cdots,n_m}}
& = \dfrac{(2 \beta_{\widehat{n_1,n_i}} - \gamma_{\widehat{n_1,n_i}})\beta_{n_1, n_2,\cdots,n_m} + 2n_1 \prod_{l \neq 1,i}(n_l - 2)^2}{2 \beta_{\widehat{n_1}}\beta_{n_1, n_2,\cdots,n_m} }\\
& = \dfrac{(\gamma_{\widehat{n_1,n_i}} +2 \alpha_{\widehat{n_1,n_i}})\beta_{n_1, n_2,\cdots,n_m} + 2n_1 \alpha_{\widehat{n_1,n_i}}^2 }{2 \beta_{\widehat{n_1}}\beta_{n_1, n_2,\cdots,n_m} }\\
&=\dfrac{2 \beta_{\widehat{n_i}} - \gamma_{\widehat{n_i}}}{2 \beta_{n_1, n_2,\cdots,n_m}}
\end{align*}
and 
\begin{align*}
\left(\dfrac{\prod_{l \neq 1,i,j}(n_l - 2)}{\beta_{\widehat{n_1}}}\right)   \left(\dfrac{n_1 \prod_{l \neq 1}(n_l - 2)}{\beta_{n_1, n_2,\cdots,n_m}} - 1 \right)
& = \dfrac{\prod_{l \neq 1,i,j}(n_l - 2)}{\beta_{\widehat{n_1}}} \times \left(\dfrac{n_1 \alpha_{\widehat{n_1}} - \beta_{n_1, n_2,\cdots,n_m}}{\beta_{n_1, n_2,\cdots,n_m}} \right)\\
& = - \dfrac{\prod_{l \neq 1,i,j}(n_l - 2)}{\beta_{\widehat{n_1}}} \times \dfrac{(n_1-2) \beta_{\widehat{n_1}} }{\beta_{n_1, n_2,\cdots,n_m}}\\
& = -\dfrac{{\prod_{l \neq i,j}(n_l - 2)}}{\beta_{n_1, n_2,\cdots,n_m}}.
\end{align*}
Hence the result follows.
\end{proof}

\section{ Distance Matrix of Multi-block Graphs}\label{sec:multi-block}

In this section, we aim to compute the determinant and inverse for a multi-block graph $G$ subject to the condition that $\cof D(G)\neq 0$. Recall that, a connected graph is said to be a multi-block graph if each of its blocks is  a complete $m$-partite graph, whenever $m\geq 2$. We achieve our goal by proving  the results for single blocks and further extend these results for multi-block graphs using~\cite{Zhou1}.  Thus, we first define a few notions on single blocks and then extend these notions to multi-block graphs to prove the requisite results.

Let us begin with a  block of a multi-block graph. In this paragraph, we suppose $G$ is the complete $m$-partite graph $K_{n_1,n_2,\cdots,n_m}$ with $m \ge 2$  and $\cof D(G)\neq 0$. We can define
\begin{equation}\label{eqn:lambda_G-single}
\lambda_{G}= \frac{\det D(G)}{\cof D(G)} =\dfrac{\beta_{n_1,n_2,\cdots,n_m}}{\gamma_{n_1,n_2,\cdots,n_m}}.
\end{equation}
By Theorem~\ref{thm:gamma-l-1}, we know $\# \{i | n_i =2, 1 \le i \le m\} \le 1$. If $\# \{i | n_i =2, 1 \le i \le m\} = 1$, then $\cof D(G) = \det D(G) \ne 0$, and so $\lambda_G = 1 >0$. If $\# \{i | n_i =2, 1 \le i \le m\} = 0$ and $n_i >2$ for all $1 \le i \le m$, then $\det D(G) \ne 0$, $\cof D(G) \ne 0$ and $\lambda_G > 1 >0$. In the rest of the paragraph we may assume there exists $l < m$ such that $n_i = 1$ for $1 \le i \le l$ and $n_j > 2$ for $l < j \le m$. Then
$$
\gamma_{n_1,n_2,\cdots,n_m} = \left( \prod_{i=1}^m (n_i - 2) \right) \left( \sum_{i=1}^m \dfrac{n_i}{n_i-2} \right) = \left( \prod_{i=1}^m (n_i - 2) \right)  \left( (m-2l)+ 2\sum_{i=l+1}^m \dfrac{1}{n_i-2} \right).
$$
Now we consider the function
\begin{equation}
f(n_{l+1},\cdots,n_m) = \sum_{i=1}^m \dfrac{n_i}{n_i-2} = (m-2l)+2 \sum_{i=l+1}^m \frac{1}{n_i - 2}.
\end{equation}
Since $\cof D(G) \neq 0$, we have $\gamma_{n_1,\cdots,n_m} \ne 0$ and so $f(n_{l+1},\cdots,n_m) \ne 0$. Note that
$$\beta_{n_1,n_2,\cdots,n_m} =  \left( \prod_{i=1}^m (n_i - 2) \right) \left(1+ \sum_{i=1}^m \dfrac{n_i}{n_i-2}\right) = \left( \prod_{i=1}^m (n_i - 2) \right) \left(1+ (m-2l)+2 \sum_{i=l+1}^m \frac{1}{n_i - 2}\right)$$ and hence we have $$\lambda_G = 1 + \dfrac{1}{\ds \sum_{i=1}^m \dfrac{n_i}{n_i-2}} =1+ \dfrac{1}{f(n_{l+1},\cdots,n_m)}.$$ Thus $\lambda_G <0$ if and only if $-1 < f(n_{l+1},\cdots,n_m) < 0$ if and only if
\begin{equation}\label{eqn:eqn_2}
\dfrac{2l-m-1}{2} < \sum_{i=l+1}^m \frac{1}{n_i - 2} <\dfrac{2l-m}{2}.
\end{equation}
Note that $0 < \ds \sum_{i=l+1}^m \dfrac{1}{n_i - 2} \le m-l$. If the inequalities in Eqn~\eqref{eqn:eqn_2} has a solution then we have $0 < \dfrac{2l-m}{2}$ and $\dfrac{2l-m-1}{2} < m-l$, which implies
\begin{equation}\label{eqn:eqn_3}
\frac{m}{2} < l < \frac{3m+1}{4}.
\end{equation}

Now we give all possible complete $m$-partite graph $G$ with $\cof D(G)\neq 0$ satisfying $\lambda_G <0$ for $2\leq m \leq 5$.

\begin{prop}\label{prop:1}
Let $m \ge 2$ and $G$ be a complete $m$-partite graph  with $\cof D(G) \ne 0$. If  $G=K_{n_1,n_2,\cdots,n_m}$, and there exists $l < m$ such that $n_i = 1$ for $1 \le i \le l$ and $n_j > 2$ for $l < j \le m$, then the following results hold.
\begin{itemize}
\item[$(i)$] For $m = 2$ or $4$, there is no complete $m$-partite graph $G$ satisfying $\lambda_G < 0$.
\item[$(ii)$] Let $m = 3$. Then $\lambda_G < 0$ if and only if $G = K_{1,1,n_3}$ for some $n_3 \ge 5$.
\item[$(iii)$] Let $m=5$. Then $\lambda_G < 0$ if and only if $G = K_{1,1,1,n_4,n_5}$ for some $n_4 \ge 5$ and $n_5 > 4+\dfrac{4}{n_4-4}$.
\end{itemize}
\end{prop}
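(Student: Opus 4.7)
The plan is to leverage the characterisation derived in the paragraph preceding the proposition: under the standing hypothesis $\cof D(G)\neq 0$ with $n_1=\cdots=n_l=1$ and $n_{l+1},\ldots,n_m>2$ for some $l<m$, the condition $\lambda_G<0$ is equivalent to the double inequality~\eqref{eqn:eqn_2}, which in turn forces $\frac{m}{2}<l<\frac{3m+1}{4}$ by~\eqref{eqn:eqn_3}. My strategy is therefore, for each $m\in\{2,3,4,5\}$, first to enumerate the admissible integers $l$ in this open interval, and then to solve the resulting integer inequality in the free variables $n_{l+1},\ldots,n_m$.

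For $m=2$ the interval $(1,7/4)$ contains no integer, so no graph satisfies $\lambda_G<0$. For $m=4$ the unique admissible value is $l=3$, and~\eqref{eqn:eqn_2} reduces to $\frac{1}{2}<\frac{1}{n_4-2}<1$, i.e.\ $n_4-2\in(1,2)$; since no integer lies in $(1,2)$, again no graph qualifies. This establishes part~$(i)$.

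For $m=3$ the unique admissible $l$ is $2$, so $G=K_{1,1,n_3}$, and~\eqref{eqn:eqn_2} becomes $0<\frac{1}{n_3-2}<\frac{1}{2}$, equivalent to $n_3\geq 5$, giving part~$(ii)$. For $m=5$ the unique admissible $l$ is $3$, so $G=K_{1,1,1,n_4,n_5}$, and~\eqref{eqn:eqn_2} reduces to $0<\frac{1}{n_4-2}+\frac{1}{n_5-2}<\frac{1}{2}$. The left inequality is automatic, and clearing denominators in the right one yields the equivalent symmetric condition $(n_4-4)(n_5-4)>4$. This immediately rules out $n_4\in\{3,4\}$, so $n_4\geq 5$; and given $n_4\geq 5$ the condition is equivalent to $n_5>4+\frac{4}{n_4-4}$, establishing~$(iii)$.

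No serious obstacle arises: the argument is a short finite case analysis driven entirely by the constraint~\eqref{eqn:eqn_3} on $l$. The only point meriting some care is that $\sum_{i=l+1}^m\frac{1}{n_i-2}>0$ is automatic, so the lower bound in~\eqref{eqn:eqn_2} is vacuous whenever $2l\leq m+1$ — as happens for $(m,l)=(3,2)$ and $(5,3)$ — while it becomes binding and in fact blocks all integer solutions in the case $(m,l)=(4,3)$, which is what forces the non-existence half of~$(i)$ for $m=4$.
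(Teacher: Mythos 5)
Your proposal is correct and follows essentially the same route as the paper: both arguments reduce $\lambda_G<0$ to the double inequality~\eqref{eqn:eqn_2} together with the constraint $\frac{m}{2}<l<\frac{3m+1}{4}$ on $l$, then enumerate the admissible $l$ for each $m$ and solve the resulting integer inequality. Your reformulation of the $m=5$ case as $(n_4-4)(n_5-4)>4$ is a harmless algebraic repackaging of the paper's computation with $f(n_4,n_5)$, and in fact disposes of the $n_4=4$ subcase that the paper's case split passes over silently.
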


\begin{proof}
When $m = 2$, there is no $l$ satisfying inequalities in Eqn~\eqref{eqn:eqn_4}. When $m = 4$, by inequalities in Eqn~\eqref{eqn:eqn_4}, we have $l = 3$. So we have $G = K_{1,1,1,n_4}$ for some $n_4 > 2$. Then $f(n_4) = -2 + \dfrac{2}{n_4-2}$. We have $f(n_4) = 0$ when $n_4 = 3$ and $f(n_4) \le -1$ for $n_4 > 4$. There is no $n_4$ such that $-1 < f(n_4) < 0$. This proves part $(i)$.

To prove part $(ii)$, let $m = 3$. Then $l = 2$ by the inequalities in Eqn~\eqref{eqn:eqn_4}, $G = K_{1,1,n_3}$ for some $n_3 \ge 3$, and $f(n_3) = -1 + \dfrac{2}{n_3-2}$. We have $f(3) = 1, f(4) = 0$ and $-1 < f(n_3) < 0$ for $n_3 \ge 5$.

Finally, let $m=5$. Then $l=3$ by inequalities in Eqn~\eqref{eqn:eqn_4}, $G = K_{1,1,1,n_4,n_5}$ for some $n_4 \ge 3$ and $n_5 \ge 3$, and $f(n_4,n_5) = -1 + \dfrac{2}{n_4-2}+ \dfrac{2}{n_5-2}$. When $n_4=3$ we have $f(n_4,n_5) = \dfrac{2}{n_5-2} > 0$ for all $n_5
 \ge 3$. Suppose $n_4 \ge 5$, then $$
-1 < f(n_4,n_5) = -1 + \dfrac{2}{n_4-2}+ \dfrac{2}{n_5-2} < 0
$$ if and only if $n_5 > 4+\dfrac{4}{n_4-4}$. This completes the proof of part $(iii)$.
\end{proof}

Next, we show that similar to the case $m=5$, for $m \ge 6$ there are infinitely many $m$-partite graphs $G$ with $\cof D(G)\neq 0$ such that $\lambda_G < 0$. To establish this we first   prove the following   lemma.
\begin{lem}\label{lem:lem_1}
Let $p$ and $q$ be non-negative integers, and $n_i>2$ for all  $1 \le i \le p+q.$ Then the following results hold.
\begin{itemize}
\item[(i)] The inequality 
\begin{equation}\label{eqn:eqn_1_1}
p < \ds \sum_{i=1}^{p+q} \dfrac{1}{n_i - 2} < p+ \dfrac{1}{2}
\end{equation}
in $n_1,n_2,\ldots,n_{p+q}$ has a positive integer solution if and only if $q \ge 1$. Moreover, for each $q \ge 1$ the inequality in Eqn.~\eqref{eqn:eqn_1_1} has infinitely many solutions.
\item[(ii)] The inequality
\begin{equation}\label{eqn:eqn_1_2}
p + \dfrac{1}{2} <\ds \sum_{i=1}^{p+q} \dfrac{1}{n_i - 2} < p+1
\end{equation}
in $n_1,n_2,\ldots,n_{p+q}$ has a positive integer solution if and only if $q \ge 2$. Moreover, for each $q \ge 2$ the inequality in Eqn.~\eqref{eqn:eqn_1_2} has infinitely many solutions.
\end{itemize}
\end{lem}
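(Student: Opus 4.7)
The plan is to exploit the rigid structure of the set $\{1/(n-2) : n \in \mathbb{N},\ n \geq 3\} = \{1,\,1/2,\,1/3,\,1/4,\ldots\}$: every admissible summand lies in $(0,1]$, the value $1$ is attained only at $n_i = 3$, and every other admissible value is at most $1/2$. The impossibility half of each assertion follows from this combinatorial gap, while the sufficiency half will be handled by an explicit one-parameter construction in the spirit of Lemma~\ref{lem:sum}, with the free parameter ranging over an infinite set to deliver infinitely many solutions.

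For part $(i)$, if $q = 0$ then $\sum_{i=1}^{p} \frac{1}{n_i - 2} \leq p$, contradicting the strict lower bound. For $q \geq 1$, I would set $n_1 = \cdots = n_p = 3$ and $n_{p+1} = \cdots = n_{p+q} = M$ for any integer $M \geq 2q + 3$. The sum equals $p + \frac{q}{M-2}$, which lies strictly between $p$ and $p + \frac{1}{2}$ by the choice of $M$, and varying $M$ gives infinitely many solutions.

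For part $(ii)$, the case $q = 0$ is handled as above. The key obstruction is $q = 1$: letting $k$ denote the number of indices with $n_i = 3$, every other summand is at most $1/2$, so
$$\sum_{i=1}^{p+1} \frac{1}{n_i-2} \;\leq\; k + \frac{p+1-k}{2} \;=\; \frac{p+1+k}{2}.$$
The strict lower bound then forces $k > p$, hence $k = p+1$, which gives sum $= p + 1$ and violates the strict upper bound. For sufficiency when $q \geq 2$, I would take $n_1 = \cdots = n_p = 3$, $n_{p+1} = 4$, and $n_{p+2} = \cdots = n_{p+q} = M$ with integer $M \geq 2q + 1$; the sum becomes $p + \frac{1}{2} + \frac{q-1}{M-2}$, which lies strictly in $\bigl(p + \frac{1}{2},\, p+1\bigr)$, and infinitely many $M$ again work.

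The only delicate step is the $q = 1$ case of $(ii)$, which hinges precisely on the gap between $1$ and $1/2$ in the range of $1/(n-2)$; once that counting argument is in place, the rest of the lemma is routine bounding plus the explicit two-level constructions above.
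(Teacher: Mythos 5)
Your proposal is correct and follows essentially the same route as the paper: necessity via the bound $\sum 1/(n_i-2)\le p$ when $q=0$ and the gap between $1$ and $1/2$ when $q=1$, and sufficiency via setting the first $p$ entries to $3$ and letting a single integer parameter range over an infinite set (your uniform choice $M$ versus the paper's $n(q-1)+2$ is an immaterial difference). Your counting argument with $k$ for the $q=1$ obstruction is a slightly more explicit write-up of the same dichotomy the paper states.
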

\begin{proof}
Let $q\geq 1$. When $n_i=3$ for all $1\leq i\leq p$, the inequality   in Eqn.~\eqref{eqn:eqn_1_1} reduces to 
\begin{equation}\label{eqn:eqn_1_1_1}
0 < \ds \sum_{i=p+1}^{p+q} \dfrac{1}{n_i - 2} <  \dfrac{1}{2}.
\end{equation}
Thus,  $n_i>2(q+1)$ for all $p+1\leq i \leq p+q$, satisfies the inequality in Eqn.~\eqref{eqn:eqn_1_1_1} and hence the inequality in Eqn.~\eqref{eqn:eqn_1_1} has infinitely many solutions. Conversely, let $q=0$. Then  $0<\ds\sum_{i=1}^{p} \frac{1}{n_i - 2} \leq p$, whenever $n_i>2$  are  integers for all  $1 \le i \le p$, and hence the inequality in Eqn.~\eqref{eqn:eqn_1_1} has no positive integer solution. This proves part $(i)$. 

Next, let  $q\geq 2$. When $n_i=3$ for all $1\leq i\leq p$, the inequality in Eqn.~\eqref{eqn:eqn_1_2} reduces to
\begin{equation}\label{eqn:eqn_1_2_1}
\dfrac{1}{2} < \ds\sum_{i=p+1}^{p+q} \dfrac{1}{n_i - 2} <  1.
\end{equation}
Further, when $$n_i=\begin{cases}
4 & \mbox{ if } i=p+1;\\
n(q-1)+2 & \mbox{ if } p+2\leq i\leq p+q \mbox{ and for some }  n\geq 3,
\end{cases}$$
we have $ \ds \sum_{i=p+1}^{p+q} \dfrac{1}{n_i - 2}=\frac{1}{2}+\frac{q-1}{n(q-1)}=\frac{1}{2}+\frac{1}{n}$ and hence satisfies the inequality in Eqn.~\eqref{eqn:eqn_1_2_1}. Since $n\geq 3$, so the inequality in Eqn.~\eqref{eqn:eqn_1_2} has infinitely many solutions. Conversely, if $q=0$, then $0<\ds\sum_{i=1}^{p} \dfrac{1}{n_i - 2} \leq p$, whenever $n_i>2$  are  integers for all  $1 \le i \le p$. Further, if $q=1$, then $\ds\sum_{i=1}^{p} \dfrac{1}{n_i - 2}$ either equals to $p+1$ or less than equal to $p+\frac{1}{2}$, whenever $n_i>2$  are  integers for all  $1 \le i \le p+1$. Therefore, the inequality in Eqn.~\eqref{eqn:eqn_1_2} has no positive integer solution. This completes the proof.
\end{proof}

\begin{prop}\label{prop:2}
For $m \ge 5$,  there are infinitely many complete $m$-partite graph $G$  with $\cof D(G) \ne 0$ such that $\lambda_G < 0$.
\end{prop}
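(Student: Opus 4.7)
The plan is to use the characterization established in the discussion preceding Proposition~\ref{prop:1}: a complete $m$-partite graph $G=K_{1,\ldots,1,n_{l+1},\ldots,n_m}$ (with $l$ parts equal to $1$ and $n_i>2$ for $l<i\le m$) has $\cof D(G)\ne 0$ and $\lambda_G<0$ precisely when $l$ satisfies the bound in Eqn.~\eqref{eqn:eqn_3} together with
\[
\frac{2l-m-1}{2}<\sum_{i=l+1}^{m}\frac{1}{n_i-2}<\frac{2l-m}{2},
\]
as in Eqn.~\eqref{eqn:eqn_2}. The strict upper bound in this display automatically rules out the vanishing condition in Theorem~\ref{thm:gamma-l-1}, guaranteeing $\cof D(G)\ne 0$. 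Hence it suffices to produce, for every $m\ge 5$, a single admissible value of $l$ for which the inequality above admits infinitely many positive integer solutions $(n_{l+1},\ldots,n_m)$ with $n_i>2$; this is exactly the setting of Lemma~\ref{lem:lem_1}.

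First I would split on the parity of $m$. For odd $m\ge 5$, take $l=(m+1)/2$, so $2l-m=1$ and the required inequality becomes
\[
0<\sum_{i=l+1}^{m}\frac{1}{n_i-2}<\frac{1}{2},
\]
which is the $p=0$ instance of Lemma~\ref{lem:lem_1}(i) with $q=m-l=(m-1)/2\ge 2\ge 1$. For even $m\ge 6$, take $l=m/2+1$, so $2l-m=2$ and the inequality becomes
\[
\frac{1}{2}<\sum_{i=l+1}^{m}\frac{1}{n_i-2}<1,
\]
which is the $p=0$ instance of Lemma~\ref{lem:lem_1}(ii) with $q=m-l=m/2-1\ge 2$. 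A direct arithmetic check confirms that each choice of $l$ lies strictly between $m/2$ and $(3m+1)/4$ in the stated range of $m$, so the admissibility condition in Eqn.~\eqref{eqn:eqn_3} is met.

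Second, I would invoke Lemma~\ref{lem:lem_1} to produce infinitely many positive integer tuples $(n_{l+1},\ldots,n_m)$ satisfying the relevant inequality. Each such tuple gives a distinct complete $m$-partite graph $G=K_{1,\ldots,1,n_{l+1},\ldots,n_m}$ for which $\cof D(G)\ne 0$ (by the strict upper bound together with Theorem~\ref{thm:gamma-l-1}, and because no two $n_i$ equal $2$) and $\lambda_G<0$ (by the derivation preceding Proposition~\ref{prop:1}). This yields the desired infinite family.

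I do not anticipate any substantive obstacle: the whole proof is a parity-driven selection of $l$ that aligns the window $\bigl(\tfrac{2l-m-1}{2},\tfrac{2l-m}{2}\bigr)$ with one of the two configurations already handled by Lemma~\ref{lem:lem_1}. The only bookkeeping is to check the bounds on $l$ and to ensure $q\ge 2$ whenever Lemma~\ref{lem:lem_1}(ii) is invoked, both of which follow from $m\ge 5$ in the odd case and $m\ge 6$ in the even case.
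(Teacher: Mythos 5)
Your proof is correct, and it rests on exactly the same two ingredients as the paper's: the characterization $\frac{2l-m-1}{2}<\sum_{i>l}\frac{1}{n_i-2}<\frac{2l-m}{2}$ from Eqn.~\eqref{eqn:eqn_2} (whose strict upper bound simultaneously forces $\cof D(G)\neq 0$ via Theorem~\ref{thm:gamma-l-1}) and Lemma~\ref{lem:lem_1} to supply infinitely many integer solutions. The difference is in economy of execution: the paper rewrites the admissible range of $l$ according to the residue of $m$ modulo $4$ (Eqn.~\eqref{eqn:eqn_4}) and then determines, for \emph{every} admissible $l=2x+k$, whether the corresponding window admits infinitely many solutions, which yields a complete description of which $(m,l)$ pairs contribute; you instead exhibit a single witness $l$ per parity class ($l=(m+1)/2$ for odd $m$, $l=m/2+1$ for even $m\ge 6$), each of which reduces to the $p=0$ instance of the lemma. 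Since the proposition only asserts existence of infinitely many graphs, one witness suffices, so your argument is shorter and the arithmetic checks ($\frac{m}{2}<l<\frac{3m+1}{4}$, $q\ge 1$ resp. $q\ge 2$) all go through for $m\ge 5$; what you give up is the finer information in the paper's proof about the full set of admissible $l$ for each $m$.
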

\begin{proof} 
Let $G=K_{n_1,n_2,\cdots,n_m}$, and there exists $l < m$ such that $n_i = 1$ for $1 \le i \le l$ and $n_j > 2$ for $l < j \le m$. Observe that, the inequality in Eqn~\eqref{eqn:eqn_3} can also be written as follows
\begin{equation}\label{eqn:eqn_4}
\begin{cases}
2x+1 \le l \le 3x & \text{ if } m=4x;\\
2x+1 \le l \le 3x & \text{ if } m=4x+1;\\
2x+2 \le l \le 3x+1 & \text{ if } m=4x+2;\\
2x+2 \le l \le 3x+2 & \text{ if } m=4x+3.\\
\end{cases}
\end{equation}
Thus we proceed for the proof by considering each of the above four cases  separately.

Let $m=4x$. Then by Eqn.~\eqref{eqn:eqn_4}, we have $l=2x+k$ for $1\leq k\leq x$, and hence the inequality in  Eqn~\eqref{eqn:eqn_2} reduces to
\begin{equation}\label{eqn:eqn_5}
(k-1) + \frac{1}{2} < \sum_{i=1}^{2x-k} \dfrac{1}{n_{2x+k+i} - 2} < k \ \mbox{ for } 1\leq k\leq x.
\end{equation}
By  Eqn.~\eqref{eqn:eqn_2}, we know that there is an one to one correspondence between integer $(\geq 3)$ solutions of the inequality in Eqn.~\eqref{eqn:eqn_5} and complete $m$-partite graphs $G$ with $\lambda_G<0$. Further, by part $(ii)$ of $Lemma~\ref{lem:lem_1}$, the inequality in Eqn.~\eqref{eqn:eqn_5} has infinitely many  integer ($\geq 3$) solutions   if and only if $(2x-k)-(k-1)\geq 2$ and $1\leq k\leq x.$ Hence there are infinitely many complete $m$-partite graph $G$ with $\lambda_G<0$  if and only if $1\leq k\leq \frac{2x-1}{2}.$ Since $m\geq 5$ and $m=4x$, so $\{k: 1\leq k\leq \frac{2x-1}{2}\}$ is a non-empty set and hence the result holds true for this case.

Let $m = 4x+1$. Then by Eqn.~\eqref{eqn:eqn_4}, we have $l=2x+k$ for $1\leq k\leq x$, and hence the inequality in Eqn~\eqref{eqn:eqn_2} reduces to
\begin{equation}\label{eqn:eqn_6}
(k-1)  < \sum_{i=1}^{2x+1-k} \dfrac{1}{n_{2x+k+i} - 2} < (k-1) + \dfrac{1}{2} \ \mbox{ for } 1\leq k\leq x.
\end{equation}
By  Eqn.~\eqref{eqn:eqn_2}, we know that there is an one to one correspondence between integer $(\geq 3)$ solutions of the inequality in Eqn.~\eqref{eqn:eqn_6} and complete $m$-partite graphs $G$ with $\lambda_G<0$.   Further, by
part $(i)$ of $Lemma~\ref{lem:lem_1}$, the inequality in Eqn.~\eqref{eqn:eqn_5} has infinitely many  integer ($\geq 3$) solutions   if and only if $(2x+1-k)-(k-1)\geq 1$ and $1\leq k\leq x.$ Hence there are infinitely many complete $m$-partite graph $G$ with $\lambda_G<0$  if and only if $1\leq k\leq \frac{2x+1}{2}.$ Since $m\geq 5$ and $m=4x+1$, so $\{k: 1\leq k\leq \frac{2x+1}{2}\}$ is a non-empty set and hence the result holds true for this case.

Let $m = 4x+2$. Then by Eqn.~\eqref{eqn:eqn_4}, we have $l=2x+k$ for $2 \le k \le x+1$, and hence the inequality in Eqn~\eqref{eqn:eqn_2} reduces to
\begin{equation}\label{eqn:eqn_7}
(k-2)+\dfrac{1}{2}  < \sum_{i=1}^{2x+2-k} \dfrac{1}{n_{2x+k+i} - 2} < (k-1) \ \mbox{ for } 2\leq k\leq x+1. 
\end{equation}
Thus, using  Eqn.~\eqref{eqn:eqn_2},  part $(ii)$ of $Lemma~\ref{lem:lem_1}$  and  arguments similar to preceding cases  yields that there are infinitely many complete $m$-partite graph $G$ with $\lambda_G<0$  if and only if $2\leq k\leq x+1.$ Since $m\geq 5$ and $m=4x+2$, so $\{k: 2\leq k\leq x+1\}$ is a non-empty set and hence the result holds true for this case.

Let $m = 4x+3$. Then by Eqn.~\eqref{eqn:eqn_4}, we have $l=2x+k$ for $2\leq k\leq x+2$, and hence the inequality in Eqn~\eqref{eqn:eqn_2} reduces to
\begin{equation}\label{eqn:eqn_8}
(k-2)  < \sum_{i=1}^{2x+3-k} \dfrac{1}{n_{2x+k+i} - 2} < (k-2) + \dfrac{1}{2} \ \mbox{ for } 2\leq k\leq x+2.
\end{equation}
Thus, using  Eqn.~\eqref{eqn:eqn_2},  part $(i)$ of $Lemma~\ref{lem:lem_1}$  and  arguments similar to preceding cases  yields that there are infinitely many complete $m$-partite graph $G$ with $\lambda_G<0$  if and only if  $2\leq k\leq x+2.$ Since $m\geq 5$ and $m=4x+3$, so $\{k: 2\leq k\leq x+2\}$ is a non-empty set and hence the result follows. This completes the proof.
\end{proof}

\begin{rem}\label{rem:lam_G}
From the above discussion, it  clear that  there are infinitely many complete multipartite graphs $G$ with $\cof D(G)$ satisfying $\lambda_G <0$. We also  know that similar assertion is  true for   $\lambda_G >0$ and 
as well as for $\lambda_G = 0$. Now we consider a special case of the complete tripartite graph.

Let $G$ be a complete tripartite graph. Then by Theorem~\ref{thm:beta-l-1}, we have $\det D(G) = 0$ if and only if $G = K_{2,2,n}$ for $n > 1$. By Theorem~\ref{thm:gamma-l-1}, we have $\cof D(G) = 0$ if and only if $G = K_{1,1,4}$ or $K_{2,2,n}$ for $n > 1$. The complete tripartite graph $ K_{1,1,n-2}$ appeared in the study of completely positive graphs, denoted by $T_n$ (for details see~\cite{Br}), and for notational convenience we are continuing with  $T_n$ instead of $ K_{1,1,n-2}$. 
\vspace*{-.1cm}
\begin{figure}[ht]
\centering
\begin{tikzpicture}[scale=0.7]
\Vertex[label=1,position=below,size=.1,color=black]{A} 
\Vertex[x=4,label=2,position=below,size=.1,color=black]{B} 
\Vertex[x=2,y=1,label=3,position=below,size=.1,color=black]{C} 
\Vertex[x=2,y=2,label=4,position=below,size=.11,color=black]{D}
\Edge[lw=1.5pt](A)(B)
\Edge[lw=1.5pt](A)(C)
\Edge[lw=1.5pt](A)(D)
\Edge[lw=1.5pt](B)(C)
\Edge[lw=1.5pt](B)(D)
\end{tikzpicture} \hspace{5mm} \begin{tikzpicture}[scale=0.7]
\Vertex[label=1,position=below,size=.1,color=black]{A} 
\Vertex[x=4,label=2,position=below,size=.1,color=black]{B} 
\Vertex[x=2,y=1,label=3,position=below,size=.1,color=black]{C} 
\Vertex[x=2,y=2,label=4,position=below,size=.11,color=black]{D}
\Vertex[x=2,y=3,label=5,position=below,size=.1,color=black]{E} 
\Edge[lw=1.5pt](A)(B)
\Edge[lw=1.5pt](A)(C)
\Edge[lw=1.5pt](A)(D)
\Edge[lw=1.5pt](A)(E)
\Edge[lw=1.5pt](B)(C)
\Edge[lw=1.5pt](B)(D)
\Edge[lw=1.5pt](B)(E)
\end{tikzpicture}  \hspace{5mm}  \begin{tikzpicture}[scale=0.6]

\Vertex[label=1,position=below,size=.1,color=black]{A} 
\Vertex[x=4,label=2,position=below,size=.1,color=black]{B} 
\Vertex[x=2,y=1,label=3,position=below,size=.1,color=black]{C} 
\Vertex[x=2,y=2,label=4,position=below,size=.11,color=black]{D}
\Vertex[x=2,y=3,label=5,position=below,size=.1,color=black]{E} 
\Vertex[x=2,y=4,label=6,position=below,size=.1,color=black]{F}
\Edge[lw=1.5pt](A)(B)
\Edge[lw=1.5pt](A)(C)
\Edge[lw=1.5pt](A)(D)
\Edge[lw=1.5pt](A)(E)
\Edge[lw=1.5pt](A)(F)
\Edge[lw=1.5pt](B)(C)
\Edge[lw=1.5pt](B)(D)
\Edge[lw=1.5pt](B)(E)
\Edge[lw=1.5pt](B)(F)
\end{tikzpicture}
\caption{$T_4,T_5,T_6$}\label{fig:M1}
\end{figure}
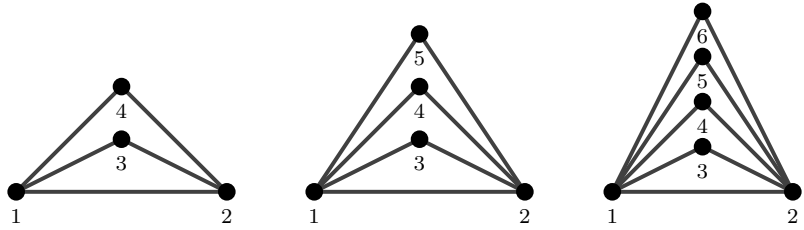
For $n\geq 3$, $T_n$ can be seen as a graph consisting of $(n-2)$ triangles with a common base (see Figure~\ref{fig:M1}). Note that  $\cof D(T_6)=0$ and $\lambda_{T_n} = -\frac{2}{n-6}$  $(n \neq 6)$ which implies that $\lambda_{T_n} < 0$ if and only if $G=T_n$ for $n\geq 7$.
\end{rem}

Let $G = (V,E)$ be a multi-block graph on $|V|$ vertices with blocks $G_t, \ 1\leq t \leq b$. If  $ \cof  D(G_t)\neq 0$ for all $1\leq t\leq b$, then define  
 \begin{equation}\label{eqn:lamda-G}
\lambda_G = \sum_{t=1}^{b} \lambda_{G_{t}}.
\end{equation}

The following result gives us the determinant of the distance matrix of multi-block graphs. We omit the proof as it follows from Theorem~\ref{Thm:cof-det}.

\begin{theorem}\label{Thm:det_multi}
Let $G=(V,E)$ be a multi-block graph with blocks $G_t$, $1\leq t\leq b.$  If  $ \cof  D(G_t)\neq 0$ for all $1\leq t\leq b$, then the determinant of  $D(G)$ is given by 
$$\det D(G)= \lambda_G \prod_{t=1}^b \cof D(G_t) = \lambda_G \ \cof D(G).$$
\end{theorem}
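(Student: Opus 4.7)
The plan is to derive the determinant formula essentially as a direct consequence of Theorem~\ref{Thm:cof-det}, using the definitions of $\lambda_{G_t}$ in Eqn.~\eqref{eqn:lambda_G-single} and $\lambda_G$ in Eqn.~\eqref{eqn:lamda-G}. The hypothesis $\cof D(G_t)\neq 0$ for all $t$ is precisely what allows us to move between the product form $\prod_{j\neq i}\cof D(G_j)$ and the ratio $\det D(G_i)/\cof D(G_i)$ without division by zero, so this assumption is doing all of the work.

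First I would invoke Theorem~\ref{Thm:cof-det} to write
\[
\det D(G) \;=\; \sum_{i=1}^{b} \det D(G_i)\prod_{j\neq i}\cof D(G_i),
\]
and observe that because each $\cof D(G_t)$ is nonzero, the term $\prod_{j\neq i}\cof D(G_j)$ may be rewritten as $\cof D(G_i)^{-1}\prod_{j=1}^{b}\cof D(G_j)$. Pulling the product $\prod_{j=1}^{b}\cof D(G_j)$ outside the sum and using Eqn.~\eqref{eqn:lambda_G-single} to recognize $\det D(G_i)/\cof D(G_i)$ as $\lambda_{G_i}$, the expression collapses to
\[
\det D(G) \;=\; \left(\sum_{i=1}^{b}\lambda_{G_i}\right)\prod_{j=1}^{b}\cof D(G_j).
\]
Then I would apply the definition of $\lambda_G$ in Eqn.~\eqref{eqn:lamda-G} to replace the bracketed sum by $\lambda_G$, and apply the cofactor factorization part of Theorem~\ref{Thm:cof-det} to identify $\prod_{j=1}^{b}\cof D(G_j)$ with $\cof D(G)$, yielding the two stated forms $\det D(G)=\lambda_G\prod_{t=1}^{b}\cof D(G_t)=\lambda_G\,\cof D(G)$.

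There is no real obstacle here; the only thing to verify is that Theorem~\ref{Thm:cof-det} is applicable, i.e.\ that each block $G_t$ of a multi-block graph $G$ is either a complete multipartite graph with every part of size at least one in such a way that $G_t$ is $2$-connected (which is automatic for complete $m$-partite graphs with $m\geq 3$ and for $K_{n_1,n_2}$ with $n_1,n_2\geq 2$), or may be handled within the same framework. Granting this, the argument is a two-line algebraic rearrangement of the Graham-type decomposition.
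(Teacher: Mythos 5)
Your proposal is correct and matches the paper's intended argument exactly: the paper omits the proof precisely because it "follows from Theorem~\ref{Thm:cof-det}" by the algebraic rearrangement you describe (factor $\prod_{j}\cof D(G_j)$ out of the Graham--Hoffman--Hosoya sum, identify each ratio $\det D(G_i)/\cof D(G_i)$ with $\lambda_{G_i}$, and sum). Only a typo to fix: in your first display the inner product should read $\prod_{j\neq i}\cof D(G_j)$, not $\prod_{j\neq i}\cof D(G_i)$.
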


Given a multi-block graph $G$ under the assumption
of the above theorem (which implies $\cof D(G)\neq 0$ by Theorem~\ref{Thm:cof-det}), the $\det D(G) = 0$ if and only if $\lambda_G = 0$. By Remark~\ref{rem:lam_G} and Eqn.~\eqref{eqn:lamda-G}, we can choose  the blocks such that $\lambda_G = 0$ and $\cof D(G) \ne 0$. Two such possible rearrangements are given in the following examples.

\begin{ex}
Let $G$ be a multi-block graph with blocks $G_i=T_{n_i}$,  $(n_i\neq 6)$ for all $1\leq i\leq b$. Suppose $G$ contain $b_1$ blocks of $T_3$, $b_2$ blocks of $T_4$ and $b_3$ blocks of $T_5$.  Now for each $T_3$ we can associate $x$ blocks of $T_n$, where $n=3x+6$. Then $$\lambda_{T_3}+x\lambda_{T_n}=\frac{2}{3}-\frac{2x}{3x+6-6}=0.$$ Similarly, for each $T_4$ we can associate $y$ blocks of $T_n$, where $n=2y+6$ and for each $T_5$ we can associate $z$ blocks of $T_n$, where $n=z+6$. Then $\lambda_G =\sum_{i} \lambda_{G_i}=0$ and $D(G)$ is not invertible. Consequently, for different choices $x,y$ and $z$, we can generate infinitely many multi-block graphs $G$ (where $\cof D(G_i) \neq 0$ for all $i$ ) with $\det D(G)=0$.
\end{ex}

\begin{ex}
Let $G$ be a multi-block graph consisting of  a  $K_4$, $K_{m,m}$ ($m\neq 2$) and $b$ blocks of $T_n$, where $b=9m-4$, $n=6+8m$. Then 
$$\lambda_G = \lambda_{K_4}+\lambda_{K_{m,m}}+b \lambda_{T_n}= \frac{3}{4}+\frac{3m-2}{2m}-\frac{2b}{n-6}= \frac{9m-4}{4m}-\frac{2(9m-4)}{6+8m-6}=0,$$
and $D(G)$ is not invertible. Consequently, for different values of $m\in \mathbb{N}\setminus \{2\}$, it is possible to produce  infinite number of multi-block graphs $G$ (where $\cof D(G_i) \neq 0$ for all $i$) with $\det D(G)=0$.
\end{ex}

Let $G = (V,E)$  be  a  complete $m$-partite graph $K_{n_1,n_2,\cdots,n_m}$ with $\cof D(G)\neq 0$. Suppose $v$ is a vertex of $G$, then we define a $|V|$-dimensional column vector $\mu_G$  as follows:
\begin{equation}\label{eqn:mu_G-single}
\mu_{G}(v)= \displaystyle \dfrac{ 1}{\gamma_{n_1, n_2,\cdots,n_m}} \sum_{i=1}^m \sum_{v\in V_{n_i}} \displaystyle\prod_{ j\neq i} (n_j-2).
\end{equation}
It is easy to observe that  the definition of $\lambda_G$ and $\mu_G$ as  defined in Eqns.~\eqref{eqn:lamda-G} and~\eqref{eqn:mu_G-single} agrees with the same, in~\cite{Bp3,Hou3}. For $G=K_{n_1,n_2,\cdots,n_m}$, in~\cite[page $22$]{Zhou1} the $\mu_G$ is denoted as $\beta$ and  also shown $\beta^t \mathds{1}=1$ and $\beta^tD(G)  = \lambda_G \mathds{1}$,  that is, $\mu_G^t \mathds{1}=1 \mbox{ and } \mu_G^tD(G) = \lambda_G \mathds{1},$ where $\lambda_G$ is defined as in Eqn.~\eqref{eqn:lambda_G-single}.

Next,  we define the Laplacian-like matrix for complete $m$-partite graphs.  Given a complete $m$-partite graph  $ K_{n_1,n_2,\cdots,n_m}$ with $m$-partition of the vertex set  $V_{i}$   for $1 \leq i \leq m$, respectively, we define a matrix  $\mathcal{L}=\mathcal{L}(K_{n_1,n_2,\cdots,n_m})=[\mathcal{L}_{uv}]$ with $\mathcal{L}\mathds{1} = \mathbf{0}$ and $\mathds{1}^t \mathcal{L} = \mathbf{0}$, called  Laplacian-like matrix of $ K_{n_1,n_2,\cdots,n_m}$, where
\begin{equation}\label{eqn:Laplike-single}
\mathcal{L}_{uv}=
\begin{cases}
\dfrac{(n_i-1)\beta_{\widehat{n_i}}-2 \gamma_{\widehat{n_i}}}{2 \gamma_{n_1, n_2,\cdots,n_m}}=a_i & \text{if} \ u = v ,u \in V_{i} \mbox{ for  } 1 \leq i \leq m;\\
\\
-\dfrac{\beta_{\widehat{n_i}}}{2\gamma_{n_1, n_2,\cdots,n_m}} =b_i & \text{if} \ u \neq v ,u,v \in V_{i} \mbox{ for  } 1 \leq i \leq m;\\
\\
\dfrac{\prod_{l \neq i,j}(n_l - 2)}{\gamma_{n_1, n_2,\cdots,n_m}}=c_{ij}& \text{if} \ u \sim v ,u \in V_{i},v\in V_{j} \mbox{ for  } 1 \leq i,j \leq m.\\
\end{cases}
\end{equation}
It can be checked  for $m=2$, the above definition agrees with  the  Laplacian-like matrix $\mathcal{L}(K_{n_1,n_2})=[\mathcal{L}(K_{n_1,n_2})_{uv}]$ for $K_{n_1,n_2}$  as defined in~\cite[Eqn.(4)]{Hou3}.  The next lemma gives the relation between $\mathcal{L}$ and the distance matrix of $K_{n_1,n_2,\cdots,n_m}$.
\begin{lem}\label{lem:LD+I-single}
Let    $\mathcal{L}$ be the Laplacian-like matrix and $ D(G) $ be the distance matrix of a complete $m$-partite graph $G=K_{n_1,n_2,\cdots,n_m}$, then $ \mathcal{L}D(G)+I=\mu_G \mathds{1}^t$, where $\mu_G$  is  as defined in Eqn.~\eqref{eqn:mu_G-single}.
\end{lem}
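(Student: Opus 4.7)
The plan is to verify the identity block-by-block, exploiting the fact that both $D(G)$ and $\mathcal{L}$ split into $m \times m$ block arrays compatible with the partition $V = V_1 \sqcup \cdots \sqcup V_m$. Writing $D(G)_{(i,i)} = 2(J_{n_i} - I_{n_i})$, $D(G)_{(i,j)} = J_{n_i \times n_j}$ for $i \neq j$, and similarly $\mathcal{L}_{(i,i)} = (a_i - b_i)I_{n_i} + b_i J_{n_i}$, $\mathcal{L}_{(i,j)} = c_{ij} J_{n_i \times n_j}$, the product $\mathcal{L} D(G)$ is block-constant on each off-diagonal block and has the form $\xi_i I_{n_i} + \eta_i J_{n_i}$ on each $(i,i)$-block. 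Since $\mu_G$ is constant on each $V_i$ with value $\mu_i := \alpha_{\widehat{n_i}} / \gamma_{n_1, n_2, \cdots, n_m}$, the matrix $\mu_G \mathds{1}^t$ has $(i,j)$-block equal to $\mu_i J_{n_i \times n_j}$. Thus the identity to prove reduces to two scalar identities per pair $(i,j)$: the coefficient of $J$ equals $\mu_i$ in every block, and the coefficient of $I$ equals $-1$ on each diagonal block.

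The first step is to compute the diagonal block $(\mathcal{L} D(G))_{(i,i)}$, which unfolds as
\[
[(a_i - b_i)I_{n_i} + b_i J_{n_i}] \cdot 2(J_{n_i} - I_{n_i}) + \sum_{k \neq i} c_{ik} n_k J_{n_i},
\]
and collect the $I_{n_i}$ and $J_{n_i}$ coefficients. The $I_{n_i}$ coefficient is $-2(a_i - b_i)$, so we need the identity $a_i - b_i = \tfrac{1}{2}$. Using the definitions of $a_i, b_i$ in Eqn.~\eqref{eqn:Laplike-single}, this is equivalent to $n_i \beta_{\widehat{n_i}} - 2\gamma_{\widehat{n_i}} = \gamma_{n_1, n_2, \cdots, n_m}$, which follows by applying Lemma~\ref{lem:index}(i) with $\mathcal{J} = \{n_i\}$ together with $\beta_{\widehat{n_i}} = \gamma_{\widehat{n_i}} + \alpha_{\widehat{n_i}}$.

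Next I would compute the $J_{n_i}$-coefficient on the $(i,i)$-block and the $J$-coefficient on each $(i,j)$-block ($i \neq j$), and show both equal $\mu_i$. For the $(i,j)$-block, expanding
\[
(\mathcal{L} D(G))_{(i,j)} = [(a_i - b_i)I_{n_i} + b_i J_{n_i}] J_{n_i \times n_j} + c_{ij} J_{n_i \times n_j} \cdot 2(J_{n_j} - I_{n_j}) + \sum_{k \neq i, j} c_{ik} n_k J_{n_i \times n_j}
\]
gives a scalar multiple of $J_{n_i \times n_j}$ whose coefficient simplifies via the identities $\sum_{k \neq i} n_k \prod_{l \neq i,k}(n_l - 2) = \gamma_{\widehat{n_i}}$ (the definition of $\gamma_{\widehat{n_i}}$) and $(n_j - 2)\prod_{l \neq i,j}(n_l - 2) = \alpha_{\widehat{n_i}}$. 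Substituting $a_i - b_i = \tfrac{1}{2}$ and $n_i \beta_{\widehat{n_i}} = \gamma_{n_1, n_2, \cdots, n_m} + 2\gamma_{\widehat{n_i}}$, the coefficient collapses to $\alpha_{\widehat{n_i}}/\gamma_{n_1, n_2, \cdots, n_m} = \mu_i$. An entirely analogous — slightly shorter — computation handles the $J$-coefficient of the diagonal block.

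The whole argument is essentially bookkeeping; the only obstacle is managing the shorthand $\beta_{\widehat{n_i}}, \gamma_{\widehat{n_i}}, \alpha_{\widehat{n_i}}$ cleanly. Every nontrivial algebraic step reduces to a single application of the recurrence $\beta_{\mathcal{I}} = \alpha_{\mathcal{J}^c} \beta_{\mathcal{J}} + \alpha_{\mathcal{J}} \gamma_{\mathcal{J}^c}$ or $\gamma_{\mathcal{I}} = \alpha_{\mathcal{J}^c} \gamma_{\mathcal{J}} + \alpha_{\mathcal{J}} \gamma_{\mathcal{J}^c}$ from Lemma~\ref{lem:index}(i) with $\mathcal{J} = \{n_i\}$ (or occasionally $\mathcal{J} = \{n_i, n_j\}$), so no new ideas are required beyond careful accounting.
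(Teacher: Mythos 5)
Your proposal is correct and is essentially the paper's own argument in block-matrix packaging: the three scalar identities you isolate (the $I$-coefficient condition $a_i-b_i=\tfrac12$ and the two $J$-coefficient conditions) are exactly the three entrywise cases the paper checks ($u=v$; $u\neq v$ in the same class; $u,v$ in different classes), and both reduce to the same applications of Lemma~\ref{lem:index}(i). No substantive difference in method.
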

\begin{proof}
Let $V_1,V_2,\cdots,V_m$ be the partition of the vertex set for the complete $m$-partite graph $K_{n_1,n_2,\cdots,n_m}$.  Now we consider the following cases:\\

\noindent $\underline{\textbf{Case 1:}} \ u,v \in V_i\ $; where $1\leq i\leq m $.
\begin{itemize}

\item For  $u = v$,
\begin{align*} 
(\mathcal{L} D+I)_{uv}  &=  1 + (\mathcal{L}D)_{uv} = 1 + 2(n_i -1)b_i + \sum_{s\neq i} n_s c_{is} \\ 
					    &=  1 - \dfrac{(n_i - 1) \left(\gamma_{\widehat{n_i}} +\prod_{\substack{s = 1, s \neq i}}^m (n_s - 2) \right)}{\gamma_{n_1, n_2,\cdots,n_m}} + \dfrac{\gamma_{\widehat{n_i}}}{\gamma_{n_1, n_2,\cdots,n_m}}\\
					    & = 1 - \dfrac{(n_i - 2) \gamma_{\widehat{n_i}}+ \gamma_{\widehat{n_i}} +(n_i - 1) \prod_{\substack{s = 1, s \neq i}}^m (n_s - 2) }{\gamma_{n_1, n_2,\cdots,n_m}} + \dfrac{\gamma_{\widehat{n_i}}}{\gamma_{n_1, n_2,\cdots,n_m}}\\
					    & = \dfrac{ \prod_{\substack{s= 1,  s \neq i}}^m (n_s - 2)}{\gamma_{n_1, n_2,\cdots,n_m}} = \mu(u).
\end{align*}
Similar calculations helps us to obtain the remaining cases.
\item  If $u\neq v$, then $\ds (\mathcal{L} D+I)_{uv} = (\mathcal{L}D)_{uv} = 2a_i + 2(n_i - 2)b_i + \sum_{s \neq i} n_s c_{is} = \mu_G(u).$
\end{itemize}
\vspace*{.2cm}
\noindent $\underline{\textbf{Case 2:}} \ u \in V_i \text{ and } v \in V_j\ $;  where $1\leq i,j\leq m $ with $\ i\neq j$.
$$(\mathcal{L} D+I)_{uv} = (\mathcal{L}D)_{uv} = a_i + (n_i - 1)b_i + 2(n_j -1)c_{ij} + \sum_{s \neq i,j} n_s c_{is} = \mu_G(u).$$
Combining all the above cases, we get $(\mathcal{L} D+I)_{uv} =\mu_G(u)$, for any $u,v$ and  the result follows.
\end{proof} 

Next we recall a definition from~\cite{Zhou1}, which is useful for our subsequent results. 
\begin{defn}
 Let $D$ and $\mathcal{L}$ be two $n \times n$ matrices,  $\beta$ be an $n \times 1$ column vector and $\lambda$ be a  number. Then
\begin{itemize}
\item[1.]  $D$ is a left LapExp($\lambda,  \beta, \mathcal{L}$) matrix,  if  
$\beta^T \mathds{1},\ \mathcal{L} \mathds{1}=0, \ \beta^T D=\lambda \mathds{1}^T \mbox{ and } \mathcal{L}D+I=\beta \mathds{1}^T$,
\item[2.] $D$ is a right LapExp($\lambda, \beta, \mathcal{L}$) matrix,  if $\beta \mathds{1}^T=1,\  \mathds{1}^T \mathcal{L}=0,\ D\beta=\lambda\mathds{1}   \mbox{ and } D\mathcal{L} +I = \mathds{1}\beta^T$.
\end{itemize}
\end{defn}

\begin{rem}\label{rem:lap-exp}
In view of Lemma~\ref{lem:LD+I-single} and~\cite{Hou3,Zhou1},  for $m\geq 2$, if $D(G)$ is the distance matrix of a complete m-partite graph $G$, then $D(G)$ is a left LapExp($\lambda_G,  \mu_G, \mathcal{L}$) matrix, where $\lambda_G,  \mu_G, \mathcal{L}$ are as defined in Eqns.~\eqref{eqn:lambda_G-single}, \eqref{eqn:mu_G-single} and \eqref{eqn:Laplike-single}, respectively. Similar calculation also leads to $D(G)$ is a right LapExp($\lambda_G,  \mu_G, \mathcal{L}$) matrix.  Further, using~\cite[Lemma 3.1]{Zhou1} if $\lambda \neq 0$, then  D is invertible and $D^{-1}= - \mathcal{L} + \dfrac{1}{\lambda} \mu_G \mu_G^T$. 
\end{rem}

Now,  we  extend the definition of $\mu_G$  and the Laplacian-like matrix $\mathcal{L}$, for  multi-block graphs. Let $G=(V,E)$ be a multi-block graph on $|V|$ vertices with blocks $G_t=(V_t,E_t)$, $1\leq t\leq b$ such that $ \cof  D(G_t)\neq 0$ for $1\leq t\leq b$. Let $\mu_G$ (we will use $\mu$ if there is no scope of confusion) be a $|V|$-dimensional column vector defined as follows. If a vertex $v$ belongs to $k$  blocks of $G$, then
\begin{equation}\label{eqn:mu(v)}
\mu_G(v) = \sum_{t=1}^{b} \mu_{G_t}(v)- (k-1).
\end{equation}
Next  each block $G_t$ is also considered  as a graph on vertex set $V$ with perhaps isolated vertices, and  let its edge set be $E_t$ (\emph{i.e.,} $G_t$ is a  graph on $|V_t|$ vertices, consider it as a graph on vertex  set $V$). Let $\mathcal{L}$  be the $|V|\times|V|$ matrix defined as above for the vertices of $G_t$ and $0$ for others. Define 
\begin{equation}\label{eqn:lap-multi-block}
\mathcal{L} = \sum_{t=1}^{b}\mathcal{L}(G_t).
\end{equation}
It can be seen that $\mathcal{L}_{uv} = 0$ if $u$ and $v$ are not in the same block and $\mathcal{L}\mathds{1} = \mathds{1}^t \mathcal{L} = \mathbf{0}$.

In view of Remark~\ref{rem:lap-exp} and \cite[Lemma 7]{Hou3}, the result below  for a multi-block graph follows from~\cite[Lemmas 4.5, 4.7 and 4.8]{Zhou1}.
\begin{lem}\label{lem:LD+I}
Let $\mathcal{L}$ be the Laplacian-like matrix and $ D(G) $ be the distance matrix of a multi-block graph $ G $. Then,  $D(G)$ is a left LapExp($\lambda_G,  \mu_G, \mathcal{L}$) matrix, {\it i.e.},
 $$\mu_G^t \mathds{1}=1, \mathcal{L}\mathds{1}=0,  D(G) \mu = \lambda_G \mathds{1},  \mbox{ and } \mathcal{L}D(G)+I=\mu_G \mathds{1}^t,$$
 where $\lambda_G,  \mu_G, \mathcal{L}$ are as defined in Eqns.~\eqref{eqn:lamda-G}, \eqref{eqn:mu(v)} and \eqref{eqn:lap-multi-block}, respectively.
\end{lem}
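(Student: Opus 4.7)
The plan is to reduce to the single-block case (Remark~\ref{rem:lap-exp}) and then glue blocks together using the general gluing machinery of~\cite{Zhou1}. Since every block $G_t$ is a complete $m$-partite graph with $\cof D(G_t)\neq 0$, Remark~\ref{rem:lap-exp} already tells us that each $D(G_t)$ is a left $\text{LapExp}(\lambda_{G_t},\mu_{G_t},\mathcal{L}(G_t))$ matrix. The task is therefore to check that the recipes in Eqns.~\eqref{eqn:lamda-G}, \eqref{eqn:mu(v)}, and~\eqref{eqn:lap-multi-block} are exactly the ones produced by~\cite[Lemmas~4.5, 4.7, 4.8]{Zhou1} when one glues the blocks successively along cut-vertices.

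I would proceed by induction on the number of blocks $b$. The base case $b=1$ is Remark~\ref{rem:lap-exp}. For the inductive step, take a leaf block $G_b$ in the block-cut tree of $G$, let $v^\star$ be the unique cut-vertex of $G$ lying in $G_b$, and let $G'$ be the subgraph induced on $V(G)\setminus(V(G_b)\setminus\{v^\star\})$. By induction $D(G')$ is a left $\text{LapExp}(\lambda_{G'},\mu_{G'},\mathcal{L}(G'))$ matrix. Now $G$ is obtained from $G'$ and $G_b$ by identifying the vertex $v^\star$, so the gluing lemmas of~\cite{Zhou1} apply: they give
\[
\lambda_G=\lambda_{G'}+\lambda_{G_b},\qquad \mathcal{L}=\mathcal{L}(G')+\mathcal{L}(G_b),
\]
and a vector $\widetilde{\mu}_G$ which agrees with $\mu_{G'}$ and $\mu_{G_b}$ off the cut-vertex and equals $\mu_{G'}(v^\star)+\mu_{G_b}(v^\star)-1$ at $v^\star$. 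Unwinding the induction, a vertex $v$ that sits in exactly $k$ blocks of $G$ gets its single-block values added $k$ times while $(k-1)$ gluings have been performed at $v$, so $\widetilde{\mu}_G(v)=\sum_{t}\mu_{G_t}(v)-(k-1)$, matching Eqn.~\eqref{eqn:mu(v)}. Hence $(\lambda_G,\mu_G,\mathcal{L})$ defined in Eqns.~\eqref{eqn:lamda-G}--\eqref{eqn:lap-multi-block} is precisely the LapExp data produced by iterated gluing, and the four identities $\mu_G^t\mathds{1}=1$, $\mathcal{L}\mathds{1}=0$, $D(G)\mu_G=\lambda_G\mathds{1}$, $\mathcal{L}D(G)+I=\mu_G\mathds{1}^t$ follow.

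If a self-contained verification is preferred over quoting~\cite{Zhou1} directly, I would instead check the four identities by hand. The identities $\mu_G^t\mathds{1}=1$ and $\mathcal{L}\mathds{1}=0$ are easy: for the first, sum Eqn.~\eqref{eqn:mu(v)} over $v\in V$, use $\mu_{G_t}^t\mathds{1}_{V_t}=1$ for each block, and invoke the block-cut-tree identity $\sum_{v}(k_v-1)=b-1$; for the second, each $\mathcal{L}(G_t)\mathds{1}=0$ by Lemma~\ref{lem:LD+I-single}. The hard part, and where I expect the real work to sit, is verifying $\mathcal{L}D(G)+I=\mu_G\mathds{1}^t$ (and the analogous $D(G)\mu_G=\lambda_G\mathds{1}$). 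The key geometric fact is that for vertices $u,v$ in distinct blocks every shortest path in a multi-block graph passes through the unique cut-vertex $w$ separating them, giving $d_G(u,v)=d(u,w)+d(w,v)$. Using this additivity and $\mathcal{L}_{uv}=0$ whenever $u,v$ do not share a block, one expands $(\mathcal{L}D(G))_{uv}$ as a sum of contributions $(\mathcal{L}(G_t)D(G))_{uv}$ over the (unique) block $G_t$ containing $u$, splits distances through the appropriate cut-vertex, and uses the single-block identity $\mathcal{L}(G_t)D(G_t)+I=\mu_{G_t}\mathds{1}^t$ from Lemma~\ref{lem:LD+I-single} together with $\mathcal{L}(G_t)\mathds{1}=0$ to telescope. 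The correction $-(k-1)$ in the definition of $\mu_G(u)$ then appears precisely to cancel the overcounting generated when $u$ is itself a cut-vertex lying in several blocks.
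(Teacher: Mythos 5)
Your proposal is correct and takes essentially the same route as the paper: the paper's own proof consists precisely of invoking the single-block result (Remark~\ref{rem:lap-exp}) together with the gluing lemmas of~\cite{Zhou1} (Lemmas 4.5, 4.7, 4.8), which is your first argument, and your inductive unwinding of the $-(k-1)$ correction in Eqn.~\eqref{eqn:mu(v)} correctly captures what that citation is doing. The additional hands-on verification you sketch (additivity of distances through cut-vertices plus telescoping via Lemma~\ref{lem:LD+I-single}) is a sound expansion of the same idea rather than a different approach.
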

It  can be seen that,  $D(G)$ is also a right LapExp($\lambda_G,  \mu_G, \mathcal{L}$) matrix. Further, by Theorem~\ref{Thm:det_multi},   $ D(G)$ is   non-singular implies that $\lambda_G\neq 0$ and hence  by Lemma~\ref{lem:LD+I} and~\cite[Lemma 3.1]{Zhou1} we have the following theorem.
\begin{theorem}\label{Thm:D-inv-multi}
Let $G$ be  multi-block graph with blocks $G_t$, $1\leq t\leq b.$ Let $\mathcal{L}$ be the Laplacian-like matrix and $ D(G) $ be the distance matrix of  $ G $. If  $\det D(G) \neq 0$ and $\cof  D(G_i)\neq 0 $,  $1\leq i\leq b$, then  
$$D(G)^{-1} = - \mathcal{L} + \dfrac{1}{\lambda_G}\mu \mu^t,$$
where $\lambda_G$ and $\mu$ are as defined in Eqns.~\eqref{eqn:lamda-G} and \eqref{eqn:mu(v)},  respectively.
\end{theorem}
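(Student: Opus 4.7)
The plan is a short direct verification built on the LapExp machinery already assembled in the excerpt. First, I would argue that the scalar $\lambda_G$ in the claimed formula is nonzero: by Theorem~\ref{Thm:cof-det}, the hypothesis $\cof D(G_t)\neq 0$ for every block gives $\cof D(G)=\prod_{t}\cof D(G_t)\neq 0$, and then Theorem~\ref{Thm:det_multi} yields $\det D(G)=\lambda_G\,\cof D(G)$, so $\det D(G)\neq 0$ forces $\lambda_G\neq 0$. In particular, $-\mathcal{L}+\frac{1}{\lambda_G}\mu\mu^t$ is well-defined.

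Next, I would invoke Lemma~\ref{lem:LD+I} together with the comment immediately after it that $D(G)$ is also a right LapExp($\lambda_G,\mu_G,\mathcal{L}$) matrix. This supplies the two identities needed for the verification, namely
\[
D(G)\,\mu=\lambda_G\,\mathds{1}\qquad\text{and}\qquad D(G)\,\mathcal{L}+I=\mathds{1}\,\mu^t.
\]

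With these in hand, the proof reduces to a one-line computation. I would multiply $D(G)$ on the right by the proposed inverse:
\[
D(G)\Bigl(-\mathcal{L}+\tfrac{1}{\lambda_G}\mu\mu^t\Bigr)=-D(G)\mathcal{L}+\tfrac{1}{\lambda_G}\bigl(D(G)\mu\bigr)\mu^t=-\bigl(\mathds{1}\mu^t-I\bigr)+\tfrac{1}{\lambda_G}\bigl(\lambda_G\mathds{1}\bigr)\mu^t=I.
\]
Since $D(G)$ is nonsingular and the product equals the identity, $-\mathcal{L}+\frac{1}{\lambda_G}\mu\mu^t$ is precisely $D(G)^{-1}$. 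Equivalently, one may simply quote \cite[Lemma 3.1]{Zhou1}, which records exactly this computation for any matrix satisfying the LapExp identities together with $\lambda\neq 0$.

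I do not anticipate any real obstacle at this stage: the substantive work was already carried out in constructing $\mathcal{L}$ blockwise via \eqref{eqn:lap-multi-block}, defining $\mu$ consistently at cut-vertices via \eqref{eqn:mu(v)}, and verifying the global LapExp identities in Lemma~\ref{lem:LD+I}. The only subtle point worth spelling out in the write-up is why $\lambda_G\neq 0$, which is the short argument given in the first paragraph; everything else is formal.
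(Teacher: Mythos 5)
Your proposal is correct and follows essentially the same route as the paper: the paper likewise deduces $\lambda_G\neq 0$ from Theorem~\ref{Thm:det_multi} and then concludes directly from Lemma~\ref{lem:LD+I} together with~\cite[Lemma 3.1]{Zhou1}. Your explicit one-line verification of $D(G)\bigl(-\mathcal{L}+\tfrac{1}{\lambda_G}\mu\mu^t\bigr)=I$ simply unpacks the cited lemma, so there is nothing to add.
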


\begin{rem}\label{rem:cf=0}
Let $D$ be an $n\times n$ invertible matrix with  $\cof D=0$. Now we show that  $D^{-1}$ cannot be written as a rank one perturbation of a Laplacian-like matrix. Suppose on the contrary
\begin{equation}\label{eqn:ddd}
D^{-1}=\mathcal{L}+\mu \mu^t,
\end{equation}
where $\mathcal{L}\mathds{1}=\mathds{1} \mathcal{L}= \mathbf{0}$ and $\mu=(\mu_1,  \mu_2, \cdots,\mu_n)^t$. Let $\eta= D^{-1}\mathds{1}$. Then $$\mathds{1}^t \eta
=\mathds{1}^t D^{-1}\mathds{1}=\mathds{1}^t \left(\dfrac{1}{\det D} \textup{ Adj } D\right)\; \mathds{1} =  \dfrac{1}{\det D} \mathds{1}^t \textup{ Adj } D \ \mathds{1} = \dfrac{\cof D}{\det D}=0.$$ 
Next, using $\mathcal{L}\mathds{1}=\mathbf{0}$, Eqn~\eqref{eqn:ddd} reduces to $D^{-1} \mathds{1} = \mu \mu^t \mathds{1}$ and hence $$\eta=D^{-1} \mathds{1}=\left(\mu_1 \sum_{i=1}^n \mu_i, \mu_2 \sum_{i=1}^n \mu_i,\cdots,\mu_n \sum_{i=1}^n \mu_i\right)^t.$$
Thus,  $0=\mathds{1}^t \eta=\ds\left(\sum_{i=1}^n \mu_i\right)^2$ and hence $\mathds{1}^t \mu= \mu^t \mathds{1}=0.$ Therefore, $D^{-1} \mathds{1} = \mu \mu^t \mathds{1}=\mathbf{0}$, which is a contradiction.
\end{rem}

Let $G$ be a multi-block graph with one of its block is of zero cofactor. Then, by Theorem~\ref{Thm:cof-det},  we have $\cof D(G)=0$, whereas  $\det D(G)$ may or may not be zero. If $\det D(G)\neq 0$, then by Remark~\ref{rem:cf=0}, the inverse of $D(G)$ cannot be written as a rank one perturbation of a Laplacian-like matrix. 

Therefore, the strategy adopted in this section is not applicable for the graphs with one of its block is of zero cofactor. The next section deals with one such class of multi-block graphs containing exactly one block as  $T_6$.

\section{Determinant and Inverse of $D(T_6 \circledcirc T_n^{(b)})$ }\label{sec:T6-Tn-b}

 Let $T_6 \circledcirc T_n^{(b)}$ denote the multi-block graph with exactly one block as $T_6$ and $b \ (\geq 1)$ blocks of  $T_n$, for a fixed $n$, where $n\neq 6$ and with a central cut vertex, which is not a base vertex (see Figure~\ref{fig:33}). Let $D(T_6 \circledcirc T_n^{(b)})$ be the distance matrix of $T_6 \circledcirc T_n^{(b)}$. Using Theorem~\ref{Thm:det_multi}, the  $\det D(T_6 \circledcirc T_n^{(b)})\neq 0$ and hence the distance matrix is invertible. For sake of completeness we state the result for determinant without proof.

\begin{theorem}
Let $D(T_6 \circledcirc T_n^{(b)})$ be the distance matrix of the graph $T_6 \circledcirc T_n^{(b)}$. Then the determinant of $D(T_6 \circledcirc T_n^{(b)})$ is given by
$$\det D(T_6 \circledcirc T_n^{(b)}) = (-1)^{nb+1} 2^{(n-3)b+4}(n-6)^b.$$
\end{theorem}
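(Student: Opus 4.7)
The plan is to reduce the determinant to a single product via Theorem~\ref{Thm:cof-det} applied to the block decomposition of $T_6 \circledcirc T_n^{(b)}$, and then to read off the two ingredients from the formulas already established for complete tripartite graphs.

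The graph $T_6 \circledcirc T_n^{(b)}$ has $b+1$ two-connected blocks: one copy of $T_6$ and $b$ copies of $T_n$. Applying Theorem~\ref{Thm:cof-det} gives
\[
\det D(T_6 \circledcirc T_n^{(b)}) = \det D(T_6)\,(\cof D(T_n))^b + b\, \det D(T_n)\, \cof D(T_6)\,(\cof D(T_n))^{b-1}.
\]
By Remark~\ref{rem:lam_G}, $\cof D(T_6) = 0$, so the second summand vanishes identically and the identity collapses to $\det D(T_6 \circledcirc T_n^{(b)}) = \det D(T_6)\,(\cof D(T_n))^b$. This collapse is the heart of the argument; once it is in place, the remaining task is purely computational.

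For $T_6 = K_{1,1,4}$, Theorem~\ref{thm:detD_single} with $|V|=6$ and $m=3$ combined with the definitions in \eqref{eqn:beta}--\eqref{eqn:gamma} yields $\gamma_{1,1,4}=1\cdot(-1)\cdot 2+1\cdot(-1)\cdot 2+4\cdot(-1)\cdot(-1)=0$ and $\alpha_{1,1,4}=(-1)(-1)(2)=2$, so $\det D(T_6) = (-2)^{3}(0+2) = -16$. For $T_n = K_{1,1,n-2}$, Theorem~\ref{thm:cof} with $|V|=n$ and $m=3$ gives $\gamma_{1,1,n-2}=-(n-4)-(n-4)+(n-2)=-(n-6)$, hence
\[
\cof D(T_n) = (-2)^{n-3}\bigl(-(n-6)\bigr) = (-1)^{n}\,2^{n-3}(n-6).
\]
Substituting both values into the reduced formula,
\[
\det D(T_6 \circledcirc T_n^{(b)}) = (-16)\bigl((-1)^{n}\,2^{n-3}(n-6)\bigr)^b = (-1)^{nb+1}\,2^{(n-3)b+4}(n-6)^b,
\]
which is the claimed identity.

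The only mildly delicate point is the final sign-and-exponent bookkeeping, where $-16=-2^{4}$ contributes both the $+4$ to the exponent of $2$ and the $+1$ to the sign exponent, while $((-1)^n)^b=(-1)^{nb}$ combines with the leading minus sign to produce $(-1)^{nb+1}$. No deeper obstacle arises, since the vanishing of $\cof D(T_6)$ eliminates every term of the Graham-type expansion except one, and each surviving factor has already been evaluated in Section~\ref{sec:Single-m-partitite}.
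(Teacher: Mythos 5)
Your argument is correct, and since the paper states this theorem explicitly without proof, your derivation supplies exactly the intended justification: the Graham--Hoffman--Hosoya expansion of Theorem~\ref{Thm:cof-det} over the $b+1$ blocks collapses to the single term $\det D(T_6)\,(\cof D(T_n))^b$ because $\cof D(T_6)=0$, and your evaluations $\det D(T_6)=-16$ and $\cof D(T_n)=(-1)^n 2^{n-3}(n-6)$ from Theorems~\ref{thm:detD_single} and~\ref{thm:cof} are accurate, as is the final sign and exponent bookkeeping.
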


\begin{figure}[ht]
 \begin{subfigure}{0.45\textwidth}
  \centering
  
    \begin{tikzpicture}[scale=0.45]
\Vertex[position=right,size=.01,color=red]{A} 
\Vertex[y=1,position=below,size=.01,color=black]{B} 
\Vertex[y=2,position=below,size=.01,color=black]{C}
\Vertex[y=3,position=below,size=.01,color=black]{D} 
\Vertex[y=4,position=below,size=.01,color=black]{E}
\Vertex[x=3,y=5,position=right,size=.01,color=black]{F} 
\Vertex[x=-3,y=5,position=left,size=.01,color=black]{G}

\Vertex[y=-1,position=below,size=.01,color=black]{B1} 
\Vertex[y=-2,position=below,size=.01,color=black]{C1}
\Vertex[y=-3,position=below,size=.01,color=black]{D1} 
\Vertex[y=-4,position=below,size=.01,color=black]{E1}
\Vertex[x=3,y=-5,position=below,size=.01,color=black]{F1} 
\Vertex[x=-3,y=-5,size=.01,position=below,color=black]{G1}

\Vertex[x=1,size=.01,position=right,color=black]{B2} 
\Vertex[x=2,size=.01,position=right,color=black]{C2}
\Vertex[x=3,size=.01,position=right,color=black]{D2}
\Vertex[x=4,y=-3,size=.01,position=right,color=black]{F2} 
\Vertex[x=4,y=3,size=.01,position=right,color=black]{G2}

\Vertex[x=-1,size=.01,position=left,color=black]{B3} 
\Vertex[x=-2,size=.01,position=left,color=black]{C3}
\Vertex[x=-3,size=.01,position=left,color=black]{D3}
\Vertex[x=-4,size=.01,position=left,color=black]{H3} 
\Vertex[x=-5,y=3,size=.01,position=left,color=black]{G3}
\Vertex[x=-5,y=-3,position=left,size=.01,color=black]{F3}

\Edge[lw=1.5pt](A)(F)
\Edge[lw=1.5pt](A)(G)
\Edge[lw=1.5pt](B)(F)
\Edge[lw=1.5pt](B)(G)
\Edge[lw=1.5pt](C)(F)
\Edge[lw=1.5pt](C)(G)
\Edge[lw=1.5pt](D)(F)
\Edge[lw=1.5pt](D)(G)
\Edge[lw=1.5pt](E)(F)
\Edge[lw=1.5pt](E)(G)
\Edge[lw=1.5pt](F)(G)

\Edge[lw=1.5pt](A)(F1)
\Edge[lw=1.5pt](A)(G1)
\Edge[lw=1.5pt](B1)(F1)
\Edge[lw=1.5pt](B1)(G1)
\Edge[lw=1.5pt](C1)(F1)
\Edge[lw=1.5pt](C1)(G1)
\Edge[lw=1.5pt](D1)(F1)
\Edge[lw=1.5pt](D1)(G1)
\Edge[lw=1.5pt](E1)(F1)
\Edge[lw=1.5pt](E1)(G1)
\Edge[lw=1.5pt](F1)(G1)

\Edge[lw=1.5pt](A)(F3)
\Edge[lw=1.5pt](A)(G3)
\Edge[lw=1.5pt](B3)(F3)
\Edge[lw=1.5pt](B3)(G3)
\Edge[lw=1.5pt](C3)(F3)
\Edge[lw=1.5pt](C3)(G3)
\Edge[lw=1.5pt](D3)(F3)
\Edge[lw=1.5pt](D3)(G3)
\Edge[lw=1.5pt](H3)(F3)
\Edge[lw=1.5pt](H3)(G3)
\Edge[lw=1.5pt](F3)(G3)

\Edge[lw=1.5pt](A)(F2)
\Edge[lw=1.5pt](A)(G2)
\Edge[lw=1.5pt](B2)(F2)
\Edge[lw=1.5pt](B2)(G2)
\Edge[lw=1.5pt](C2)(F2)
\Edge[lw=1.5pt](C2)(G2)
\Edge[lw=1.5pt](D2)(F2)
\Edge[lw=1.5pt](D2)(G2)
\Edge[lw=1.5pt](F2)(G2)

\end{tikzpicture}
    \caption{$T_6  \circledcirc T_7^{(3)}$}
    \label{fig:1}
  \end{subfigure} \hspace{15mm}
   \begin{subfigure}{0.4\textwidth}
  \centering
    \begin{tikzpicture}[scale=0.7]
\Vertex[position=right,size=.01,color=red]{A} 
\Vertex[x=1.5,y=2.5,position=right,size=.01,color=black]{F} 
\Vertex[x=-1.5,y=2.5,position=left,size=.01,color=black]{G}

\Vertex[x=1.5,y=-2.5,position=below,size=.01,color=black]{F1} 
\Vertex[x=-1.5,y=-2.5,size=.01,position=below,color=black]{G1}

\Vertex[x=1,size=.01,position=right,color=black]{B2} 
\Vertex[x=2,size=.01,position=right,color=black]{C2}
\Vertex[x=3,size=.01,position=right,color=black]{D2}
\Vertex[x=4,y=-2,size=.01,position=right,color=black]{F2} 
\Vertex[x=4,y=2,size=.01,position=right,color=black]{G2}

\Vertex[x=-3,y=1.5,size=.01,position=left,color=black]{G3}
\Vertex[x=-3,y=-1.5,position=left,size=.01,color=black]{F3}

\Edge[lw=1.5pt](A)(F)
\Edge[lw=1.5pt](A)(G)
\Edge[lw=1.5pt](F)(G)

\Edge[lw=1.5pt](A)(F1)
\Edge[lw=1.5pt](A)(G1)
\Edge[lw=1.5pt](F1)(G1)

\Edge[lw=1.5pt](A)(F3)
\Edge[lw=1.5pt](A)(G3)
\Edge[lw=1.5pt](F3)(G3)

\Edge[lw=1.5pt](A)(F2)
\Edge[lw=1.5pt](A)(G2)
\Edge[lw=1.5pt](B2)(F2)
\Edge[lw=1.5pt](B2)(G2)
\Edge[lw=1.5pt](C2)(F2)
\Edge[lw=1.5pt](C2)(G2)
\Edge[lw=1.5pt](D2)(F2)
\Edge[lw=1.5pt](D2)(G2)
\Edge[lw=1.5pt](F2)(G2)

\end{tikzpicture}
\vspace*{.7cm}
    \caption{$T_6 \circledcirc T_3^{(3)}$}
    \label{fig:2}
  \end{subfigure}
  \caption{}\label{fig:33}
\end{figure}

We have found that the inverse of $D(T_6 \circledcirc T_n^{(b)})$ is of a similar form as in~\cite[Section 3]{JD}. To be specific, we found a matrix $R$ such that  $D(T_6 \circledcirc T_n^{(b)})^{-1}$  is a linear combination  of Laplacian matrix $L(T_6 \circledcirc T_n^{(b)})$, rank one matrix $J$ and the matrix $R$. For notational convenience,  we write  $D$ and $L$ to denote $D(T_6 \circledcirc T_n^{(b)})$ and $L(T_6 \circledcirc T_n^{(b)})$,  respectively. With suitable choice of vertex indexing, the block form of $D$ and $L$ can be written as:
\begin{small}
\begin{equation}\label{eqn:D}
D =
\left[
\begin{array}{c|c|c|c|c|c|c}
D_1 & D_2 &D_2  &\cdots &D_2 &D_2 &\mathbf{d}_3\\
\hline
D_2^t & D_4 & D_5  &\cdots &D_5 &D_5 & \mathbf{d}_6 \\
\hline
D_2^t &D_5 & D_4  &\cdots &D_5 &D_5 & \mathbf{d}_6 \\
\hline
\vdots & \vdots &   \vdots &\ddots & \vdots & \vdots& \vdots\\
\hline
D_2^t & D_5 & D_5  &\cdots &D_4 & D_5& \mathbf{d}_6 \\
\hline
D_2^t & D_5 & D_5  &\cdots & D_5 &D_4 & \mathbf{d}_6 \\
\hline
\mathbf{d}_3^t &\mathbf{d}_6^t   & \mathbf{d}_6^t &\cdots &\mathbf{d}_6^t & \mathbf{d}_6^t & 0 \\
\end{array}
\right], \ 
L=
\left[
\begin{array}{c|c|c|c|c|c|c}
L_1 & \mathbf{0}       & \mathbf{0} &\cdots &\mathbf{0} &\mathbf{0} &\textbf{l}_1 \\
\hline
\mathbf{0} &L_2        & \mathbf{0} &\cdots &\mathbf{0} &\mathbf{0} &\textbf{l}_2 \\
\hline
\mathbf{0} &\mathbf{0} & L_2        &\cdots &\mathbf{0} &\mathbf{0} &\textbf{l}_2\\
\hline
\vdots     & \vdots     & \vdots     &\ddots & \vdots    & \vdots & \vdots \\
\hline
\mathbf{0} &\mathbf{0} & \mathbf{0} &\cdots &L_2        &\mathbf{0} & \textbf{l}_2\\
\hline
\mathbf{0} &\mathbf{0} & \mathbf{0} &\cdots    &\mathbf{0}  &L_2 & \textbf{l}_2\\
\hline
\textbf{l}_1^t &\textbf{l}_2^t &\textbf{l}_2^t  &\cdots &\textbf{l}_2^t   &\textbf{l}_2^t & 2(b+1) \\
\end{array}
\right], 
\end{equation}
where 
$$
D_1 = \left[
\begin{array}{c|c}
J_2 - I_2& J_{2 \times 3}\\
\hline
J_{3 \times 2} & 2(J_3-I_3)
\end{array}
\right], \ 
D_2 =  \left[
\begin{array}{c|c}
2J_2 & 3J_{2 \times (n-3)}\\
\hline
3J_{3 \times 2} & 4J_{3 \times (n-3)}\\
\end{array}
\right], \ 
\mathbf{d}_3= \left[
\begin{array}{c}
\mathds{1}_2 \\
\hline
2\mathds{1}_3
\end{array}
\right],
$$

$$
D_4 = \left[
\begin{array}{c|c}
J_2 - I_2& J_{2 \times (n-3)}\\
\hline
J_{(n-3) \times 2} & 2(J_{n-3}-I_{n-3})
\end{array}
\right],
D_5 = \left[
\begin{array}{c|c}
2J_2 & 3J_{2 \times (n-3)}\\
\hline
3J_{(n-3) \times 2} & 4J_{n-3}\\
\end{array}
\right], \ 
\mathbf{d}_6= \left[
\begin{array}{c}
\mathds{1}_2 \\
\hline
2\mathds{1}_{n-3}
\end{array}
\right],$$

$$
L_1 = \left[
\begin{array}{c|c}
6I_2- J_2             &-J_{2 \times 3}\\
\hline
-J_{3 \times 2}                 &2I_3\\
\end{array}
\right], \ 
L_2 = \left[
\begin{array}{c|c}
nI_2- J_2             &-J_{2 \times (n-3)}\\
\hline
-J_{(n-3) \times 2}                 &2I_{n-3}\\
\end{array}
\right],\ 
\textbf{l}_1 = \left[
\begin{array}{c}
-\mathds{1}_2\\
\hline
\mathbf{0}_{ 3 \times 1}\\
\end{array}
\right]
  \mbox{ and }
\textbf{l}_2 = \left[
\begin{array}{c}
-\mathds{1}_2\\
\hline
\mathbf{0}_{(n-3)\times 1}\\
\end{array}
\right].
$$  
\end{small}
\begin{rem}
Note that for $n=3$, some of the above  block matrices contains submatrices of the form $A_{p\times 0}$ or $A_{0\times p}$ and in these cases we will consider such matrices do not exist. For example with $n=3$, $D_2 =  \left[
\begin{array}{c}
2J_2 \\
\hline
3J_{3 \times 2}
\\
\end{array}
\right],\ D_4 = 
J_2 - I_2,\ 
D_5 = 2J_2,\ \mathbf{d}_6= \mathds{1}_2 $ etc. and we follow similar convention for rest of the section. 
\end{rem}

Next, we define the matrix $R$, whose block matrix is given by 

\begin{small}
\begin{equation}\label{eqn:R_T_6-T_nb}
R=
\left[
\begin{array}{c|c|c|c|c|c|c}
R_1 & R_2       & R_2 &\cdots & R_2 & R_2 &\textbf{r}_1 \\
\hline
R_2^t  & R_3    & R_4 &\cdots &R_4  &R_4 &\textbf{r}_2 \\
\hline
R_2^t &R_4& R_3       &\cdots &R_4  &R_4 &\textbf{r}_2 \\
\hline
\vdots     & \vdots     & \vdots     &\ddots & \vdots    & \vdots& \vdots \\
\hline
R_2^t  &R_4 & R_4 &\cdots &R_3   & R_4    & \textbf{r}_2 \\
\hline
R_2^t  &R_4 & R_4 &\cdots  & R_4  &R_3       & \textbf{r}_2 \\
\hline
\textbf{r}_1^t &\textbf{r}_2^t &\textbf{r}_2^t  &\cdots &\textbf{r}_2^t &\textbf{r}_2^t & r_3 \\
\end{array}
\right],
\end{equation}
where
$$
R_1 = \left[
\begin{array}{c|c}
\left(4b(b+1) - (3b+4)(n-6)\right) J_2 + 4(n-6)(b+1)I_2 &-\left(2b(b+1)+(n-6) \right) J_{2 \times 3}\\
\hline
-\left(2b(b+1)+(n-6) \right)J_{3 \times 2}  & \left(b(b+1)-(n-6) \right)J_3+(b+1)(n-6)I_3\\
\end{array}
\right],\ 
$$
$$
R_2 = \left[
\begin{array}{c|c}
-\left(2(b+1)(n-4) +(n-6)\right) J_2 &\left(2(b+1)-(n-6) \right) J_{2 \times (n-3)}\\
\hline
-\left((b+1)(n-4)-(n-6) \right)J_{3 \times 2}  & -\left((b+1)+(n-6) \right)J_{3 \times (n-3)}\\
\end{array}
\right],
$$
\noindent
$$
R_3 = (b+1)\left[
\begin{array}{c|c}
\left(n-2 -\dfrac{n-6}{b+1} \right) J_2 + (n-2)(n-6)I_2 &-\left(n-4+\dfrac{n-6}{b+1} \right) J_{2 \times (n-3)}\\
\hline
-\left(n-4+\dfrac{n-6}{b+1} \right)J_{(n-3) \times 2}  & \left(1 -\dfrac{n-6}{b+1} \right)J_{(n-3)}+(n-6)I_{(n-3)}\\
\end{array}
\right],\ 
$$
$$
R_4 = -(n-6)J_{(n-1)}, \ \textbf{r}_1 =  (n-6)\left[
\begin{array}{c}
\left(2b(b+1)-1 \right) \mathds{1}_2\\
\hline
-\left(b(b+1)+1 \right) \mathds{1}_3\\
\end{array}
\right],\ 
\textbf{r}_2 = -(n-6) \mathds{1}_{(n-1)}\  \text{and}\  r_3= -b^2(n-6).
$$
\end{small}

Now we state and prove the result which gives the inverse of $D$, whenever $n \neq 6$ and $b\geq 1$. 

\begin{theorem}\label{thm:D_inv_t6_tn}
Let $b \geq 1$  and $n\neq 6$. Let $D$ be the distance matrix of $T_6 \circledcirc T_n^{(b)}$. Then the inverse of $D$ is given by
$$
D^{-1} = -\dfrac{1}{2}L +\dfrac{1}{2(b+1)}J + \dfrac{1}{2(b+1)(n-6)} R,
$$ where $J$ is the matrix of all ones of conformal order, $L$ and $R$ are the matrices as defined in Eqns.~(\ref{eqn:D}) and (\ref{eqn:R_T_6-T_nb}), respectively.
\end{theorem}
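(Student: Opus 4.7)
The plan is to verify the claimed formula by direct block-matrix computation, namely to show that $D$ times the proposed right-hand side equals the identity. Using the $(b+2)\times(b+2)$ partition of indices displayed in Eqn.~\eqref{eqn:D}, with the first $5$ indices for the non-cut vertices of the $T_6$ block, the next $b$ groups of $n-1$ indices for the non-cut vertices of the $T_n$ blocks, and a final index for the central cut vertex, all four matrices $D$, $L$, $J$ and $R$ share this conformal block form, so the products $DL$, $DJ$ and $DR$ can be expanded block by block using only products of the elementary pieces $J_{p\times q}$, $I_p$ and $\mathds{1}_p$.

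Of the three products, $DJ=(D\mathds{1})\mathds{1}^t$ is immediate from the row sums of the blocks of $D$. For $DL$, the block-diagonal structure of $L$ (with coupling only through the cut-vertex row and column) means that each block of $DL$ involves at most two of the simple products $D_1 L_1$, $D_2 L_2$, $D_4 L_2$, $D_5 L_2$, $D_1 \mathbf{l}_1$, $D_2 \mathbf{l}_2$ and their companions, each of which collapses to a scalar multiple of $I_p$ plus a scalar multiple of $J_{p\times q}$. For $DR$ the calculation is longer but uses the same elementary identities, and every $(i,j)$-block of $DR$ similarly reduces to a linear combination of $J_{p\times q}$ and $I_p$.

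Combining the three pieces with the coefficients $-\frac{1}{2}$, $\frac{1}{2(b+1)}$ and $\frac{1}{2(b+1)(n-6)}$, the final step is to check that each diagonal block equals the identity of the appropriate size and that every off-diagonal block vanishes. The coefficients defining $R$ in Eqn.~\eqref{eqn:R_T_6-T_nb} are precisely tuned so that all $J_{p\times q}$ contributions coming from the three summands cancel, while the identity parts come entirely from $-\frac{1}{2}DL$. The main obstacle is the bookkeeping for $DR$ and the verification of the resulting scalar identities in $b$ and $n$, particularly in the cut-vertex row and column, where the auxiliary vectors $\mathbf{d}_3$, $\mathbf{d}_6$, $\mathbf{l}_1$, $\mathbf{l}_2$, $\mathbf{r}_1$, $\mathbf{r}_2$ and the scalar $r_3=-b^2(n-6)$ must balance exactly. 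In the degenerate case $n=3$ several rectangular blocks $J_{p\times 0}$ disappear, but the remaining polynomial identities in $b$ are unchanged, so the verification goes through uniformly.
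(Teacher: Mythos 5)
Your proposal follows essentially the same route as the paper: the paper combines the three summands into a single block matrix $C = -\tfrac{1}{2}L + \tfrac{1}{2(b+1)}J + \tfrac{1}{2(b+1)(n-6)}R$ and then verifies $DC = I$ block by block in ten steps, which is exactly the direct verification you describe. One small inaccuracy: the identity-matrix contributions do not come only from $-\tfrac{1}{2}DL$, since $R_1$ and $R_3$ themselves carry terms proportional to $I_2$, $I_3$ and $I_{n-3}$, but this does not affect the overall strategy.
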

\begin{proof}
Let us denote 
$$C = -\dfrac{1}{2}L +\dfrac{1}{2(b+1)}J + \dfrac{1}{2(b+1)(n-6)} R,$$ whose block form is given by
\begin{small}
\begin{equation}\label{eqn:D_inv_2}
C=\frac{1}{2(n-6)}
\left[
\begin{array}{c|c|c|c|c|c|c}
C_1 & C_2 &C_2  &\cdots &C_2 &C_2 &\mathbf{c}_3\\
\hline
C_2^t & C_4 & \mathbf{0}  &\cdots &\mathbf{0} &\mathbf{0} & \mathbf{c}_6 \\
\hline
C_2^t &\mathbf{0} & C_4  &\cdots &\mathbf{0} &\mathbf{0} & \mathbf{c}_6 \\
\hline
\vdots & \vdots & \vdots  & \ddots & \vdots & \vdots  & \vdots \\
\hline
C_2^t & \mathbf{0} & \mathbf{0}  &\cdots &C_4 & \mathbf{0} & \mathbf{c}_6 \\
\hline
C_2^t & \mathbf{0} & \mathbf{0}  &\cdots & \mathbf{0} &C_4 & \mathbf{c}_6 \\
\hline
\mathbf{c}_3^t &\mathbf{c}_6^t & \mathbf{c}_6^t  &\cdots &\mathbf{c}_6^t &\mathbf{c}_6^t & c_7 \\
\end{array}
\right],
\end{equation}
where
$$
C_1 = \left[
\begin{array}{c|c}
2\left(2b -(n-6)\right) J_2 - 2(n-6)I_2& -\left(2b -(n-6) \right) J_{2 \times 3}\\
\hline
-\left(2b -(n-6) \right) J_{3 \times 2} & bJ_3- (n-6)I_3
\end{array}
\right],\ 
C_2 = \left[
\begin{array}{c|c}
-2(n-4)J_2 & 2 J_{2 \times (n-3)}\\
\hline
(n-4) J_{3 \times 2} & -J_{3 \times (n-3)}
\end{array}
\right],\ 
$$

$$
\mathbf{c}_3= (n-6)\left[
\begin{array}{c}
(2b+1)\mathds{1}_2 \\
\hline
-b \mathds{1}_3
\end{array}
\right],\
C_4 = \left[
\begin{array}{c|c}
2(n-4)J_2 - 2(n-6)I_2& -2 J_{2 \times (n-3)}\\
\hline
-2 J_{(n-3) \times 2} & J_{n-3}- (n-6)I_{n-3}
\end{array}
\right],
$$
$$
\mathbf{c}_6= \left[
\begin{array}{c}
(n-6)\mathds{1}_2 \\
\hline
\mathbf{0}_{(n-3)\times 1}
\end{array}
\right] \ \text{and} \ 
c_7 = -(n-6)(3b+1).
$$
\end{small}

Consider the block matrix $Y = DC = (Y_{ij})$ of dimension $b+2$, where $Y_{ij}$ are block matrices of conformal order given by
\begin{small}
\begin{equation}\label{eqn:Y_1}
Y_{ij}=
\begin{cases}
D_1C_1+bD_2C_2^t+\mathbf{d}_3\mathbf{c}_3^t & \text{if} \ i = j = 1; \\
D_1C_2+D_2C_4+\mathbf{d}_3\mathbf{c}_6^t & \text{if} \ i =1, j =2,3,...,b+1; \\
D_1\mathbf{c}_3+ bD_2\mathbf{c}_6+c_7\mathbf{d}_3 & \text{if} \ i =1, j =b+2; \\
D_2^tC_1+D_4C_2^t+(b-1)D_5C_2^t+\mathbf{d}_6\mathbf{c}_3^t & \text{if} \ i =2,3,...,b+1, j =1; \\
D_2^tC_2+D_4C_4+\mathbf{d}_6\mathbf{c}_6^t & \text{if} \ i = j,\  i,j = 2,3,...,b+1;\\
D_2^tC_2+D_5C_4+\mathbf{d}_6\mathbf{c}_6^t & \text{if} \ i \neq j = 2,3,...,b+1;\\
D_2^t\mathbf{c}_3+D_4\mathbf{c}_6+(b-1)D_5\mathbf{c}_6+c_7\mathbf{d}_3 & \text{if} \ i =2,3,...,b+1, j = b+2;\\
\mathbf{d}_3^tC_1+b \mathbf{d}_6^tC_2^t & \text{if} \ i =b+2 ,j = 1;\\
\mathbf{d}_3^tC_2+\mathbf{d}_6^tC_1 & \text{if} \ i =b+2 ,j = 2,3,...,b+1;\\
\mathbf{d}_3^t\mathbf{c}_3+b \mathbf{d}_6^t\mathbf{c}_6 & \text{if} \ i = j =b+2.\\
\end{cases}
\end{equation}
\end{small}

We will show $Y=I$ to complete the proof. For the sake of simplicity, we compute $Y_{ij}$ in different steps.\\

\noindent {\textbf{Step 1 : }}$\underline{Y_{ij}, \mbox{ for } i = j = 1}$

\noindent Note that,\\
\begin{small} $ D_1C_1 = \left[
                                    \begin{array}{c|c}
                                       - \left(2b+(n-6) \right) J_2 + 2(n-6)I_2     & -b J_{2 \times 3}\\
                                       \hline
                                       -2(n-6)J_{3 \times 2} & 2(n-6)I_3
                                    \end{array}
                                 \right],\ \mathbf{d}_3\mathbf{c}_3^t  = (n-6) \left[
                                                  \begin{array}{c|c}
                                                     (2b+1)J_2 & -bJ_{2 \times 3}\\
                                                     \hline
                                                     2(2b+1)J_{3 \times 2} & -2bJ_3
                                                  \end{array}
                                                \right],                                \\
 \mbox{and }  D_2C_2^t  = \left[
                                 \begin{array}{c|c}
                                        -2(n-7)J_2                & (n-7)J_{2 \times 3}\\
                                        \hline
                                       -4(n-6)J_{3 \times 2} & 2(n-6)J_3
                                    \end{array}
                                 \right].$ \end{small}
Thus, for $ i=j=1$,  we have
$$Y_{ij} = \frac{1}{2(n-6)} \left[ D_1C_1 +bD_2C_2^t+\mathbf{d}_3\mathbf{c}_3^t \right] =I_{5}.$$

\noindent {\textbf{Step 2 : }} $\underline{Y_{ij},  \mbox{ for } i =1 ,\  j = 2,3,...,b+1}$\\

\begin{small}
\noindent Now, $ D_1C_2 = \left[
\begin{array}{c|c}
(n-4)J_2 & -J_{2 \times (n-3)}\\
\hline
\textbf{0}_{3\times 2} & \textbf{0}_{3 \times (n-3)}
\end{array}
\right], 
D_2C_4 = \left[
\begin{array}{c|c}
-2(n-5)J_2 & J_{2 \times (n-3)}\\
\hline
-2(n-6)J_{3\times 2} & \textbf{0}_{3 \times (n-3)}
\end{array}
\right] \mbox{ and }  \\
\mathbf{d}_3\mathbf{c}_6^t  = (n-6) \left[
\begin{array}{c|c}
J_2 & \textbf{0}_{2 \times (n-3)}\\
\hline
2J_{3\times 2} & \textbf{0}_{3 \times (n-3)}
\end{array}
\right].
$\end{small} Thus, for $ i =1 ,\  j = 2,3,...,b+1$,  we have
$$Y_{ij} = \frac{1}{2(n-6)} \left[ D_1C_2 +D_2C_4 +\mathbf{d}_3\mathbf{c}_6^t \right] = \textbf{0}_{5 \times (n-1)}.$$

\noindent {\textbf{Step 3 : }} $\underline{Y_{ij},  \mbox{ for } i =1 ,\   j = b+2}$\\
Now,
\begin{small}\begin{fleqn}
\begin{flalign*}
 D_1\mathbf{c}_3 &= (n-6)\left[
\begin{array}{c}
-(b-1) \mathds{1}_2 \\
\hline
2\mathds{1}_3 
\end{array}
\right], 
D_2\mathbf{c}_6 = 2(n-6)\left[
\begin{array}{c}
2 \mathds{1}_2 \\
\hline
3\mathds{1}_3 
\end{array}
\right] \mbox{ and } 
c_7\mathbf{d}_3  = -(n-6)(3b+1)\left[
\begin{array}{c}
\mathds{1}_2 \\
\hline
2\mathds{1}_3 
\end{array}
\right].
\end{flalign*}
\end{fleqn}\end{small}
Thus, for $ i =1 ,\   j = b+2$,  we have
$$Y_{ij} = \frac{1}{2(n-6)} \left[ D_1\mathbf{c}_3 +bD_2\mathbf{c}_6 +c_7\mathbf{d}_3  \right] = \textbf{0}_{5 \times 1}.$$

\noindent {\textbf{Step 4 : }} $\underline{Y_{ij},  \mbox{ for } i =2,3,...,b+1, j = 1}$\\

\begin{small}
$
 \mbox{ Next, } D_2^tC_1 = \left[
\begin{array}{c|c}
-(3(n-6)+2b)J_2 & (b+(n-6))J_{2 \times 3}\\
\hline
-6(n-6)J_{(n-3)\times 2} & 2(n-6)J_{(n-3) \times 3}
\end{array}
\right], 
D_4C_2^t = \left[
\begin{array}{c|c}
2 J_2 & -J_{2 \times 3}\\
\hline
\textbf{0}_{(n-3)\times 2} & \textbf{0}_{(n-3) \times 3}
\end{array}
\right], \\
D_5C_2^t = \left[
\begin{array}{c|c}
-2(n-7) J_2 & (n-7)J_{2 \times 3}\\
\hline
-4(n-6)J_{(n-3)\times 2} & 2(n-6)J_{(n-3) \times 3}
\end{array}
\right] \mbox{ and } \\
\mathbf{d}_6\mathbf{c}_3^t  = (n-6) \left[
\begin{array}{c|c}
(2b+1) J_2 & -bJ_{2 \times 3}\\
\hline
2(2b+1)J_{(n-3)\times 2} & -2bJ_{(n-3) \times 3}
\end{array}
\right].
$\end{small}
Thus, for $ i =2,3,...,b+1, j = 1$,  we have
$$Y_{ij} = \frac{1}{2(n-6)} \left[ D_2^tC_1 +D_4C_2^t +(b-1)D_5C_2^t +\mathbf{d}_6\mathbf{c}_3^t  \right] = \textbf{0}_{(n-1) \times 5}.$$

\noindent {\textbf{Step 5 : }} $\underline{Y_{ij},  \mbox{ for } i =j ,\  i,j = 2,3,...,b+1}$
\begin{fleqn}
\begin{flalign*}
 \mbox{Next, } D_2^tC_2 &= \left[
\begin{array}{c|c}
(n-4)J_2 & -J_{2 \times (n-3)}\\
\hline
\textbf{0}_{(n-3)\times 2} & \textbf{0}_{(n-3)}
\end{array}
\right], \ 
\mathbf{d}_6\mathbf{c}_6^t  = (n-6) \left[
\begin{array}{c|c}
J_2 & \textbf{0}_{2 \times (n-3)}\\
\hline
2J_{(n-3)\times 2} & \textbf{0}_{(n-3)}
\end{array}
\right] \mbox{ and }\\
D_4C_4 &= \left[
\begin{array}{c|c}
\left(4 -2(n-3) \right) J_2 + 2(n-6)I_2 & J_{2 \times (n-3)}\\
\hline
-2(n-6)J_{(n-3) \times 2} & 2(n-6)I_{(n-3)}
\end{array}
\right].
\end{flalign*}
\end{fleqn}

Thus, for $ i = j = 2,3,...,b+1$,  we have
$$Y_{ij} = \frac{1}{2(n-6)} \left[ D_2^tC_2 +D_4C_4 +\mathbf{d}_6\mathbf{c}_6^t  \right] = I_{(n-1)}.$$

\noindent {\textbf{Step 6 : }} $\underline{Y_{ij},  \mbox{ for } i\neq j, \  i,j =2,3,...,b+1}$\\

\begin{small}
Now, $D_5C_4 = \left[
\begin{array}{c|c}
-2(n-5)J_2& J_{2 \times (n-3)}\\
\hline
-2(n-6)J_{(n-3) \times 2} & \textbf{0}_{(n-3)}
\end{array}
\right].$
\end{small} 
Thus, using calculations from Step $5$,  we have
$$Y_{ij} = \frac{1}{2(n-6)} \left[ D_2^tC_2 +D_5C_4 +\mathbf{d}_6\mathbf{c}_6^t \right] = \textbf{0}_{(n-1)}.$$

\noindent {\textbf{Step 7 : }} $\underline{Y_{ij},  \mbox{ for } i =2,3,...,b+1, j= b+2}$\\

\begin{small}
Next, $ D_2^t\mathbf{c}_3 = (n-6)\left[
\begin{array}{c}
-(b-4)\mathds{1}_2 \\
\hline
6\mathds{1}_{(n-3)} 
\end{array}
\right], 
D_4\mathbf{c}_6 = (n-6)\left[
\begin{array}{c}
\mathds{1}_2 \\
\hline
2\mathds{1}_{(n-3)} 
\end{array}
\right] \text{ and } \\
D_5\mathbf{c}_6 = 2(n-6)\left[
\begin{array}{c}
2\mathds{1}_2 \\
\hline
3\mathds{1}_{(n-3)} 
\end{array}
\right].$\end{small} 
Thus, for $ i =2,3,...,b+1, j= b+2$,  we have
$$Y_{ij} = \frac{1}{2(n-6)} \left[ D_2^t\mathbf{c}_3 +D_4\mathbf{c}_6 +(b-1)D_5\mathbf{c}_6 +c_7\mathbf{d}_3 \right] = \textbf{0}_{(n-1) \times 1}.$$

\noindent {\textbf{Step 8 : }} $\underline{Y_{ij},  \mbox{ for } i =b+2, j=1}$\\

\begin{small}Here,  
$
\mathbf{d}_3^tC_1 =\left[
\begin{array}{c|c}
-4b\mathds{1}_2^t  & 2b \mathds{1}_3^t
\end{array}
\right] \mbox{ and }
\mathbf{d}_6^tC_2^t =\left[
\begin{array}{c|c}
4\mathds{1}_2^t  & -2 \mathds{1}_3^t
\end{array}
\right]. 
$\end{small} 
Thus, for $ i =b+2, j=1$,  we have
$$Y_{ij} = \frac{1}{2(n-6)} \left[\mathbf{d}_3^tC_1 +b \mathbf{d}_6^tC_2^t \right] = \textbf{0}_{1 \times 5}.$$

\noindent {\textbf{Step 9 : }} $\underline{Y_{ij},  \mbox{ for } i =b+2, j=2,3,...,b+1}$\\

\begin{small}Next,  
$
\mathbf{d}_3^tC_2 =\left[
\begin{array}{c|c}
2(n-4)\mathds{1}_2^t  & -2 \mathds{1}_{(n-3)}^t
\end{array}
\right] \text{ and }
\mathbf{d}_6^tC_1 =\left[
\begin{array}{c|c}
-2(n-4)\mathds{1}_2^t  & 2 \mathds{1}_{(n-3)}^t
\end{array}
\right].
$\end{small}

\noindent Thus, for $i =b+2, j=2,3,...,b+1$,  we have
$$Y_{ij} = \frac{1}{2(n-6)} \left[\mathbf{d}_3^tC_2 +\mathbf{d}_6^tC_1 \right] = \textbf{0}_{1 \times (n-1)}.$$

\noindent {\textbf{Step 10 : }} $\underline{Y_{ij},  \mbox{ for } i = j = b+2}$\\

\begin{small}Now,  
$
\mathbf{d}_3^t\mathbf{c}_3  = 2(n-6)(2b+1)-6b(n-6) \mbox{ and } 
\mathbf{d}_6^t\mathbf{c}_6  = 2(n-6).
$\end{small}
Thus, for $ i = j = b+2$,  we have
$$Y_{ij} = \frac{1}{2(n-6)} \left[ \mathbf{d}_3^t\mathbf{c}_3  +b \mathbf{d}_6^t\mathbf{c}_6  \right] = 1.$$

From the above calculations, it is clear that $Y=I$ and hence, the desired result follows.
\end{proof}

\section{Conclusion}
In this article, we first find the  cofactor  and determinant of the distance matrix for complete $m$-partite graphs  and then find its inverse whenever it exists. Unlike the case of complete bipartite graphs,  the determinant and cofactor can be zero for infinitely many complete $m$-partite graphs $(m\geq 3) $ and provide an equivalent condition for which the determinant and cofactor of the distance matrix is zero. Next, we consider the distance matrix of multi-block graphs with blocks whose distance  matrices having nonzero cofactor.  For this case, if inverse exists, we find the inverse as a rank one perturbation of a multiple of the Laplacian-like matrix similar to trees, block graphs and bi-block graphs. We also provide the inverse of the distance matrix for a class of multi-block graphs with one block whose distance matrix having zero cofactor.  Consequently, as a special case to multi-block graphs, we compute the determinant and inverse of the distance matrix for a  class of completely positive graphs.\\

\noindent{ \textbf{\Large Acknowledgements}}\\

We take this opportunity to thank the anonymous reviewers for their critical reading of the manuscript  and suggestions which have immensely helped us in getting the article to its present form. The authors would like to  thank one of the anonymous reviewers for the constructive suggestions and inputs for the proof of Propositions~\ref{prop:1} and~\ref{prop:2}. We also thank A.K. Lal for his helpful suggestions and comments. Sumit Mohanty would like to thank the Department of Science and Technology, India, for financial support through the projects MATRICS (MTR/2017/000458).  Some of the computations were verified using the computer package “Sage”. We thank the authors of “Sage” for generously releasing their software as an open-source package.

\small{

}

\end{document}